\documentclass[12pt]{amsart}

\usepackage{amsfonts}
\usepackage{amsmath, amsthm, amssymb, ulem, amscd}

\setlength{\marginparwidth}{.8in}
\setlength{\oddsidemargin}{0.35in}
\setlength{\evensidemargin}{0.35in} 
\setlength{\textwidth}{6in}
\setlength{\marginparwidth}{1.0in}

\def\crn#1#2{{\vcenter{\vbox{
        \hbox{\kern#2pt \vrule width.#2pt height#1pt
           }
          \hrule height.#2pt}}}}
\def\intprod{\mathchoice\crn54\crn54\crn{3.75}3\crn{2.5}2}
\def\into{\mathbin{\intprod}}

\newcommand{\stopthm}{\hfill$\square$\medskip}

\setcounter{section}{0}

\pagestyle{headings}

\newcommand{\pa}{\partial}
\newcommand{\Ric}{\operatorname{Ric}}
\newcommand{\End}{\operatorname{End}}

\newcommand{\Hol}{\operatorname{Hol}}

\newcommand{\tr}{\operatorname{tr}}
\newcommand{\tf}{\operatorname{tf}}

\newcommand{\R}{\mathbb R}
\newcommand{\N}{\mathbb N}
\newcommand{\C}{\mathbb C}
\newcommand{\J}{\mathbb J}

\newcommand{\fe}{\varphi}
\newcommand{\al}{\alpha}
\newcommand{\la}{\lambda}
\newcommand{\be}{\beta}
\newcommand{\ka}{\kappa}

\newcommand{\Up}{\Upsilon}
\newcommand{\ds}{\diamondsuit}
\newcommand{\gh}{\widehat{g}}

\newcommand{\wh}{\widehat}
\newcommand{\Rt}{\widetilde{R}}
\newcommand{\Jt}{\widetilde{\J}}
\newcommand{\nt}{\widetilde{\nabla}}
\newcommand{\ct}{\widetilde{\chi}}

\newcommand{\Gat}{\widetilde{\Gamma}}
\newcommand{\Gt}{\widetilde{\Gamma}}
\newcommand{\tb}{\bar \theta}

\newcommand{\Ga}{\Gamma}
\newcommand{\gt}{\widetilde{g}}
\newcommand{\cL}{\mathcal{L}}

\newcommand{\cI}{\mathbb I}  

\newcommand{\cG}{\mathcal{G}}
\newcommand{\cGt}{\widetilde{\mathcal{G}}} 
\newcommand{\cM}{\mathcal{M}}
\newcommand{\cB}{\mathcal{B}}

\newcommand{\cU}{\mathcal{U}}

\newcommand{\cT}{\mathcal{T}}
\newcommand{\cD}{\mathcal{D}}
\newcommand{\fg}{\mathfrak{g}}
\newcommand{\fp}{\mathfrak{p}}
\newcommand{\fo}{\mathfrak{o}}
\newcommand{\fs}{\mathfrak{s}}
\newcommand{\nf}{\infty}

\theoremstyle{plain}
\newtheorem{theorem}{Theorem}[section]
\newtheorem{lemma}[theorem]{Lemma}
\newtheorem{proposition}[theorem]{Proposition}

\theoremstyle{definition}

\newtheorem{definition}[theorem]{Definition}

\theoremstyle{remark}

\numberwithin{equation}{section}

\title[Parallel Tractor Extension]{Parallel Tractor Extension and Ambient
  Metrics of Holonomy Split $G_2$}  

\author{C. Robin Graham}
\address{Department of Mathematics, University of Washington,
Box 354350\\
Seattle, WA 98195-4350}
\email{robin@math.washington.edu}

\author{Travis Willse}
\address{Department of Mathematics, University of Washington,
Box 354350\\
Seattle, WA 98195-4350}
\email{willse@math.washington.edu}

\begin{document}

\maketitle

\thispagestyle{empty}

\renewcommand{\thefootnote}{}
\footnotetext{Partially supported by NSF grant \# DMS 0906035.}   
\renewcommand{\thefootnote}{1}

\section{Introduction}\label{intro}
The work of Nurowski and Leistner-Nurowski in \cite{N1}, \cite{N2}, and  
\cite{LN} has revealed a beautiful connection between the geometry of
generic 2-plane fields $\cD$ on manifolds $M$ of dimension 5 and 
pseudo-Riemannian metrics of signature $(3,4)$ in dimension 7 whose
holonomy is the split real form  
$G_2$ of the exceptional Lie group.  (Throughout this paper, unqualified
$G_2$ refers to the split real form.  A 2-plane field on a 5-manifold is
said to be generic if its second commutator spans $TM$ at each point.)  The
starting point is Nurowski's 
observation in \cite{N1} that using Cartan's solution \cite{C} of the 
equivalence problem for such $\cD$, one can invariantly associate
to $\cD$ a conformal class of metrics of signature $(2,3)$ on $M$.  The
ambient metric construction of \cite{FG1} associates to a real-analytic
conformal structure of signature $(p,q)$ on an odd-dimensional manifold $M$ 
a Ricci-flat metric $\gt$ of signature $(p+1,q+1)$ on an open set 
$\cGt\subset \R_+\times M\times \R$ containing $\R_+\times M\times \{0\}$.
Specializing 
to Nurowski's conformal structures produces a metric of signature $(3,4)$
associated to each real-analytic $\cD$.  

In \cite{N2}, Nurowski identifies $\gt$ explicitly for a particular
8-parameter family of generic 2-plane fields on $\R^5$.  The family is  
parametrized by $\R^8$ via polynomial equations.  This is  
remarkable in itself, as it is rare that $\gt$ can be explicitly
identified.   
Using Nurowski's formula for $\gt$, Leistner-Nurowski show in 
\cite{LN} that all the metrics in the family satisfy $\Hol(\gt)\subset
G_2$, and that if one of four of the eight
parameters is nonzero, then $\Hol(\gt)=G_2$. 
In particular, this gives a completely explicit 8-parameter family of
metrics of holonomy $G_2$.  

One reason this result is of interest is because metrics whose holonomy
equals $G_2$ are not easy 
to come by.  Despite their appearance on Berger's list in 1955, it was not
until 1987 that Robert Bryant \cite{Br1} first proved the existence of such
metrics.  
The case of compact $G_2$ has received more attention in the intervening
years, due partly to the role of manifolds of holonomy compact $G_2$ 
as an $M$-theory analogue of Calabi-Yau manifolds.  But even in that case 
where more is known, new constructions are of interest.     

The analysis of Leistner-Nurowski of the holonomy of these metrics depends
crucially on 
Nurowski's explicit formula for $\gt$.  It is natural to ask whether
analogous 
properties hold for more general $\cD$.  In this paper we show this is
the case. 

For simplicity we take $M$ to be oriented (which is equivalent to $\cD$ 
being oriented).  Our results extend easily to the non-orientable case.  
Our first main result extends the Leistner-Nurowski holonomy containment to
general real-analytic $\cD$. 
\begin{theorem}\label{holonomycontained}
Let $\cD\subset TM$ be a generic 2-plane field on a
connected, oriented 5-manifold $M$, with $M$ and $\cD$ real-analytic.  
Then $\Hol(\gt)\subset G_2$.   
\end{theorem}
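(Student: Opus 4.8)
The plan is to realize the containment as the existence of a parallel tensor on the ambient space. The split group $G_2$ is precisely the stabilizer in $GL(7,\R)$ of a suitable generic $3$-form on $\R^7$, and such a form automatically preserves a bilinear form of signature $(3,4)$; hence $G_2\subset SO(3,4)$. Accordingly, it suffices to produce a $3$-form $\wt\Phi$ on $(\cGt,\gt)$ that is parallel, $\nt\wt\Phi=0$, and pointwise generic, i.e.\ lies in the open $GL$-orbit whose stabilizer is $G_2$. Since parallel transport preserves both $\gt$ and the pointwise $GL$-orbit type of a parallel tensor, and since genericity is an open condition, it is enough to arrange that $\wt\Phi$ be parallel and generic at a single point; the holonomy $\Hol(\gt)$ then lies in $\mathrm{Stab}(\wt\Phi)=G_2$.

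The initial data for $\wt\Phi$ comes from the conformal side. Because $\cD$ is generic, Cartan's solution of the equivalence problem equips $M$ with a $G_2$-Cartan geometry, and this geometry sits inside the normal conformal Cartan geometry of Nurowski's class (Sagerschnig, Hammerl--Sagerschnig). Equivalently, the standard tractor bundle $\cT$ of Nurowski's conformal structure---which has rank $7$ and signature $(3,4)$---carries a $3$-form $\Phi$ that is parallel for the normal tractor connection and generic at each point, with $\mathrm{Stab}(\Phi)=G_2$. I would then invoke the correspondence between tractors and the ambient metric (Fefferman--Graham; \v{C}ap--Gover): the tractor bundle $\cT$ is identified with $T\cGt$ restricted to the initial surface $\R_+\times M\times\{0\}$, and the tractor connection is induced by the ambient Levi-Civita connection $\nt$. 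Under this identification $\Phi$ furnishes a $3$-form $\wt\Phi$ defined along the initial surface which is $\nt$-parallel in the directions tangent to that surface and $G_2$-generic there.

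It remains to extend $\wt\Phi$ off the initial surface so that it becomes parallel in all directions. I would define the extension by propagating $\wt\Phi$ along the transverse direction via the equation $\nt\wt\Phi=0$, and then verify that the resulting form is in fact parallel in every direction. The crux is that the overdetermined system $\nt\wt\Phi=0$ is compatible: its integrability is controlled by the ambient curvature, and here one uses that $\gt$ is Ricci-flat together with the precise structure of the ambient connection to see that the transverse derivative prescribed by the normal tractor connection is consistent with full parallelism. Because the dimension $5$ is odd and $(M,\cD)$ is real-analytic, the ambient metric $\gt$ exists as a genuine real-analytic metric on an open set $\cGt$, so this propagation produces an honest parallel $3$-form rather than a merely formal one.

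The main obstacle is this last step---the \emph{parallel tractor extension} itself: showing that a tractor $3$-form parallel for the normal tractor connection extends to a genuinely $\nt$-parallel $3$-form on a full ambient neighborhood. Establishing the compatibility of the extension equation off the initial surface, using Ricci-flatness and the structure of $\nt$, is where the real work lies; once it is in hand, genericity of $\wt\Phi$ at points of the initial surface propagates by parallelism to all of $\cGt$, and $\Hol(\gt)\subset\mathrm{Stab}(\wt\Phi)=G_2$ follows at once.
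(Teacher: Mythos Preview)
Your proposal is correct and follows essentially the same route as the paper: obtain a parallel, compatible tractor $3$-form from the Hammerl--Sagerschnig reduction, use the tractor/ambient identification to view it as tangentially parallel along $\cG$, extend by parallel transport in the $\rho$-direction and verify full parallelism via the Ricci-flatness/straightness of $\gt$ (this is exactly the paper's Theorem~\ref{main}), and then conclude $\Hol(\gt)\subset G_2$ from compatibility/genericity, which propagates from $\cG$ by analytic continuation (the paper) or equivalently by parallel transport (your version).
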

Our second main result provides sufficient conditions for $\Hol(\gt)= G_2$.     
We impose two pointwise nondegeneracy conditions involving the Weyl and 
Cotton tensors of a representative $g$ of Nurowski's conformal class.  If
$x\in M$, define the linear transformation $L_x:T_xM\times \R \rightarrow 
\otimes^3T_x^*M$ by
\begin{equation}\label{Lform}
L_x(v,\lambda)=W_{ijkl}v^i+C_{jkl}\lambda,
\end{equation}
where $W_{ijkl}$ denotes the Weyl tensor of $g$ at $x$ and $C_{jkl}$ the
Cotton tensor.  
The map $L_x$ depends on the choice of representative $g$ of the conformal 
class, but the conformal transformation laws of $W$ and $C$ show that
its range, and therefore also its rank, are invariant under conformal
rescaling.  Our first condition is that $L_x$ has rank 6, or equivalently,
that it is injective.   

Our second condition depends only on the Weyl tensor; in fact it depends
only on the 5-dimensional piece of the Weyl tensor giving Cartan's 
basic curvature invariant $A$ of generic 2-plane fields whose vanishing
locally characterizes the homogeneous model.  $A$ is a section of
$S^4\cD^*$, i.e. it is a 
symmetric 4-form on $\cD$.  For $y\in M$, we say that $A_y$ is 
3-nondegenerate if the only vector $X\in \cD_y$ satisfying $A(Y,X,X,X)=0$ for
all $Y\in \cD_y$ is $X=0$.  We will say that $A_y$ is 3-degenerate if it is
not 3-nondegenerate. 
\begin{theorem}\label{holonomycriterion}
Let $\cD\subset TM$ be a generic 2-plane field on a
connected, oriented 5-manifold $M$, with $M$ and $\cD$ real-analytic.  
Suppose
there exist $x,y\in M$ such that $L_x$ is injective and $A_y$ is
3-nondegenerate.  Then $\Hol(\gt)= G_2$.   
\end{theorem}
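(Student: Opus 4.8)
The plan is to upgrade the containment $\Hol(\gt)\subset G_2$ of Theorem~\ref{holonomycontained} to equality by showing that the holonomy algebra $\mathfrak{hol}(\gt)$ lies in no proper subalgebra of $\mathfrak{g}_2$. Since $M$, $\cD$, and hence $\gt$ are real-analytic, I would invoke the analytic Ambrose--Singer theorem: the holonomy algebra at a point $p$ is the span of the values at $p$ of the iterated covariant derivatives $\nt^{(k)}\Rt$, $k\ge 0$, regarded as skew endomorphisms of the tangent space. The first step is therefore to express $\Rt$ and its low-order derivatives along the initial surface $\R_+\times M\times\{0\}$ in terms of the conformal data of a representative $g$. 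Using the identification of the ambient Levi-Civita curvature along this surface with the normal conformal tractor curvature, whose only nonzero components are the Weyl tensor $W$ and the Cotton tensor $C$, the leading curvature $\Rt$ is built from $W$ and $C$, while $\nt\Rt$ additionally involves $\nabla W$, $\nabla C$, and the next coefficient in the Fefferman--Graham expansion of $g_\rho$ (determined by $\Ric(\gt)=0$). I write $\cT$ for the resulting $7$-dimensional tractor (equivalently ambient) representation on which $\mathfrak{g}_2$ acts.

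The structural input is the subgroup theory of split $G_2$: a connected proper subgroup acts on $\cT$ either reducibly, preserving a nonzero proper subspace, or irreducibly, in which case it is conjugate to the principal $SL(2,\R)$ acting as $S^6\R^2$. I would dispatch these two cases using the two hypotheses at the two points. First, suppose $\mathfrak{hol}(\gt)$ acts reducibly, so that there is a parallel subbundle $V\subsetneq\cT$. Then every $\Rt(X,Y)$ and all its covariant derivatives preserve $V$ at \emph{every} point, in particular over $x$. Translating this invariance into the tractor normal form, an invariant $V$ forces an algebraic degeneracy of the pair $(W_x,C_x)$ that produces a nonzero element of the kernel of the map $L_x$ in \eqref{Lform}; this contradicts injectivity of $L_x$. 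Hence $L_x$ injective excludes every reducible proper subalgebra — the reductive stabilizers of $\mathfrak{sl}(3,\R)$- and $\mathfrak{so}(4)$-type together with all parabolic subalgebras. Crucially this step must also rule out \emph{null} invariant subspaces, since pseudo-Riemannian holonomy need not be reductive.

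By the dichotomy it then remains only to exclude $\mathfrak{hol}(\gt)$ being the principal $\mathfrak{sl}(2,\R)$, which acts irreducibly and is therefore not obstructed by $L_x$. If $\mathfrak{hol}(\gt)$ were this principal $\mathfrak{sl}(2,\R)\subset\mathfrak{g}_2$, then $\Rt$ would take values in it at every point, in particular over $y$, constraining $W_y$ — and hence Cartan's quartic $A_y$, the $S^4\cD^*$-component of $W$ — to a special algebraic type. I would show that this type forces $A_y$ to be $3$-degenerate, contradicting the hypothesis. Combining the two steps, $\mathfrak{hol}(\gt)$ lies in no proper subalgebra of $\mathfrak{g}_2$, so $\mathfrak{hol}(\gt)=\mathfrak{g}_2$ and $\Hol(\gt)=G_2$.

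The hard part will be the explicit curvature bookkeeping underlying both steps: identifying $\Rt$ and $\nt\Rt$ along the initial surface with $W$, $C$, and their derivatives precisely enough to (i) convert ``$V$ invariant'' into the exact degeneracy of $(W_x,C_x)$ that matches the rank condition on $L_x$, and (ii) pin down how the principal-$\mathfrak{sl}(2,\R)$ form of the curvature forces $3$-degeneracy of $A$. Within this, the most delicate point is handling the non-reductive (parabolic, null-subspace) cases in the reducibility step, where the Riemannian reductivity of holonomy is unavailable and one must exclude invariant isotropic flags directly from the genericity of $L_x$.
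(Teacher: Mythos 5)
Your overall strategy (quote Theorem~\ref{holonomycontained} for the containment, then exclude proper holonomy subalgebras using the two pointwise hypotheses) matches the paper's, but the way you distribute the two hypotheses over the case analysis contains a genuine error, and it sits exactly at the point you yourself flag as the delicate one. You claim that injectivity of $L_x$ excludes \emph{every} reducibly acting proper subalgebra, including those preserving a null invariant subspace. This fails for a $2$-dimensional totally null invariant subspace $V=V\cap V^\perp$. In that case the ambient space carries a parallel totally null $2$-plane distribution, and what the Leistner--Nurowski arguments extract from it (the paper's Proposition~\ref{LN}, condition (3)) is \emph{not} a local Einstein scale but a metric $g$ in the conformal class with a parallel null line bundle $L\subset TM$ satisfying $Z\into\Ric_g=0$ for $Z\in L^\perp$, together with the Weyl degeneracy $W(U,K,K,Z)=0$ for $K\in L$, $U\in TM$, $Z\in L^\perp$. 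That degeneracy is quadratic in $K$ and does not yield a nonzero pair $(v,\lambda)$ with $v^iW_{ijkl}+\lambda C_{jkl}=0$; there is no conformal rescaling killing the Cotton tensor in this Walker-type situation, so no kernel element of $L_x$ arises. The paper excludes precisely this case with the \emph{other} hypothesis: Lemma~\ref{WA} shows that existence of such a null $K$ at a point is equivalent to $3$-degeneracy of $A$ at that point, so it is the $3$-nondegeneracy of $A_y$ that kills the null $2$-plane case. Your assignment --- all reducible cases to $L_x$, only the irreducible principal $\mathfrak{sl}(2,\R)$ to $A_y$ --- therefore cannot be carried out as stated.

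Two further points. First, converting an invariant subspace of the ambient holonomy into conformal data on $M$ is not mere ``curvature bookkeeping'': for nondegenerate $V$ (and for null $V$ of dimension $1$ or $3$) one needs the deRham splitting of $\gt$ and the correspondence between parallel ambient structures and (almost) Einstein metrics in the conformal class --- the ``substantial theorems'' the paper quotes from \cite{LN}; it is only through these that one reaches the relation $W_{ijkl}\Upsilon^i+C_{jkl}=0$ (the Cotton transformation law at an Einstein scale) contradicting injectivity of $L$ on the dense set where the Einstein scale exists. Second, the paper never needs your classification of irreducible subgroups of split $G_2$: by Berger's list, an irreducibly acting holonomy group of a non-locally-symmetric metric of signature $(3,4)$ is $SO(3,4)$ or $G_2$, and the locally symmetric possibility is dispatched by a one-line homogeneity argument ($\nt_T\Rt=-2\Rt$ and $\Rt_{ijkl}=t^2W_{ijkl}$, so $A_y\neq 0$ forces $W\neq 0$, hence $\Rt\neq 0$, hence $\nt\Rt\neq 0$). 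Your alternative route through the principal $\mathfrak{sl}(2,\R)$ could likely be closed (such holonomy would force local symmetry by Berger's list, hence flatness by the same homogeneity identity, hence $A\equiv 0$), but as written it is only a plan; the reducible-case misassignment is the step that actually breaks.
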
  

In Theorems~\ref{holonomycontained} and \ref{holonomycriterion}, the domain
of $\gt$ is taken to be a sufficiently small neighborhood of $\R_+\times
M\times \{0\}$ in $\R_+\times M\times \R$ diffeomorphic to 
$\R_+\times M\times \R$, which is invariant under dilations 
in the $\R_+$ variable.  

As regards Theorem~\ref{holonomycriterion}, we also show that if one fixes 
a point of a 5-manifold $M$, most generic rank 2 
distributions satisfy that $L$ is injective and $A$ is
3-nondegenerate at that point.  There is a normal form
for such distributions:  with respect to a suitable choice of local
coordinates $(x,y,z,p,q)$, an 
arbitrary generic 2-plane field can be written locally as 
\begin{equation}\label{Dform}
\cD=\operatorname{span}\{\pa_q, \pa_x+p\pa_y+q\pa_p+F\pa_z\}
\end{equation}
for a scalar function $F$ such that $F_{qq}$ is nonvanishing (see
\cite{BH}).  The
coordinates can be taken so that the chosen point is the origin.
For $\cD$ in the form \eqref{Dform}, in \cite{N1}, \cite{N2}, Nurowski
gives a formula for a representative metric $g_F$ of the conformal class 
such that the components of $g_F$ and $g_F^{-1}$ are polynomials in $F$,
the derivatives of $F$ of orders $\leq 4$, and $F_{qq}^{-1}$, with 
coefficients which are universal functions of the local coordinates.
{From} this it is evident that the components of the Weyl  
and Cotton tensors of $g_F$ can be expressed by similar formulae 
involving derivatives of $F$ of orders $\leq 6$, resp. $\leq 7$.  
\begin{proposition}\label{Fhol}
Let $\cD$ have the form \eqref{Dform} with $F_{qq}$ nonvanishing.  Each of
the sets defined by the 
conditions $\operatorname{rank}(L)<6$ at the origin, or $A$
is 3-degenerate at the origin, is contained in a proper algebraic
subvariety in the space of 7-jets of $F$ at the origin.  
\end{proposition}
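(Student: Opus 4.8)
The plan is to show that each of the two degeneracy conditions defines a proper algebraic subvariety in the space of 7-jets of $F$ at the origin. Recall that for $\cD$ in the form \eqref{Dform}, the components of $g_F$ and $g_F^{-1}$ are polynomials in $F$, its derivatives of orders $\leq 4$, and $F_{qq}^{-1}$, so the Weyl tensor $W$ and Cotton tensor $C$ at the origin are polynomials in the 6-jet, resp. 7-jet, of $F$ at the origin, divided by a power of $F_{qq}$. After clearing the denominator $F_{qq}$ (which is nonvanishing by hypothesis), the entries of the matrix representing $L_x$ and the coefficients of the 4-form $A_y$ become genuine polynomials in the 7-jet of $F$. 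Since each condition will be shown to be algebraic, the key point is merely to exhibit, for each condition, a \emph{single} 7-jet at which the condition fails—this witnesses that the subvariety is proper.

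First I would treat the rank condition on $L_x$. The locus $\operatorname{rank}(L)<6$ is the common vanishing of all $6\times 6$ minors of the $125\times 6$ matrix whose columns are the components of $W_{ijkl}$ (for $i$ varying) together with $C_{jkl}$; each such minor is a polynomial in the 7-jet of $F$ after clearing the $F_{qq}$ denominators, so the locus is algebraic. To see it is proper, it suffices to produce one $F$ (equivalently one admissible 7-jet) for which $L_x$ is injective at the origin. The natural candidate is the distribution from the Leistner–Nurowski family of \cite{N2}, \cite{LN}: by Theorem~\ref{holonomycriterion} together with the fact established there that $\Hol(\gt)=G_2$ for generic members of that family, there must exist a point where $L$ is injective, and after translating that point to the origin via the normal form \eqref{Dform} we obtain the desired witness. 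Alternatively, and more self-containedly, one can exhibit an explicit $F$—for instance a suitable polynomial such as $F=q^2$ plus correction terms—and verify by direct computation of Nurowski's $g_F$ that $L_x$ has rank $6$ at the origin.

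Next I would treat the $3$-nondegeneracy of $A_y$. The tensor $A$ is extracted from the ($5$-dimensional piece of the) Weyl tensor, so its coefficients, as a symmetric $4$-form on the $2$-plane $\cD_y$, are again polynomials in the $6$-jet of $F$ after clearing $F_{qq}$. Choosing a basis of $\cD_y$ from \eqref{Dform}, write $A(Y,X,X,X)$ as a pair of binary cubic forms in the coordinates of $X\in\cD_y$. Then $A_y$ is $3$-degenerate precisely when these two binary cubics have a common nontrivial root, which happens exactly when their resultant vanishes. Since the resultant of two binary forms is a polynomial in their coefficients, the $3$-degeneracy locus is the zero set of this resultant—an algebraic condition on the $6$-jet, hence on the $7$-jet, of $F$. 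Properness again follows by exhibiting a single $F$ with $A_y$ $3$-nondegenerate; the same Leistner–Nurowski witness works, or one computes Cartan's invariant $A$ directly from Nurowski's normal form for a convenient explicit $F$ and checks the resultant is nonzero.

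The main obstacle is not the algebraicity—that is automatic once the polynomial dependence on the jets is recorded—but rather the verification that each subvariety is \emph{proper}, i.e. that the degeneracy conditions do not hold identically. The cleanest route is to appeal to the Leistner–Nurowski examples, which are already known to yield $\Hol(\gt)=G_2$, so that by the contrapositive of the hypotheses of Theorem~\ref{holonomycriterion} neither degeneracy can hold at every point; pulling a good point back to the origin supplies a witness $7$-jet for each condition simultaneously. If one prefers an explicit and independent verification, the labor lies in carrying Nurowski's formula for $g_F$ through to the Weyl and Cotton tensors for a chosen $F$ and confirming the nonvanishing of the relevant minor and resultant—a finite but lengthy symbolic computation, best organized so that only the values at the origin are needed.
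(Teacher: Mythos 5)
Your treatment of the rank condition on $L$ is essentially the paper's: the locus $\operatorname{rank}(L)<6$ is cut out by the $6\times 6$ minors, and properness is witnessed inside the Leistner--Nurowski family ($F=q^2+\sum_{k=0}^6a_kp^k+bz$ with $a_3\neq 0$, $a_4\neq 0$), verified directly from the explicit Weyl and Cotton formulae in the appendix of \cite{LN}. That half goes through, provided you actually do the direct computation rather than rely on the holonomy argument discussed below.

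There is, however, a genuine gap in your treatment of the $A$-condition, and it sits exactly in the step you deferred: producing a witness for properness. Both of your proposed witnesses fail. First, the Leistner--Nurowski family cannot serve: as the paper notes, every $\cD$ in that 8-parameter family has $A$ 2-degenerate at \emph{every} point, and 2-degeneracy implies 3-degeneracy (take $Y_1=Y$, $Y_2=X$), so no member of the family, at any point, is 3-nondegenerate. Second, your fallback via ``the contrapositive of the hypotheses of Theorem~\ref{holonomycriterion}'' is logically invalid: that theorem gives conditions \emph{sufficient} for $\Hol(\gt)=G_2$, not necessary, so from $\Hol(\gt)=G_2$ for the Leistner--Nurowski metrics you cannot conclude that $L$ is injective or $A$ is 3-nondegenerate anywhere; indeed that family has $\Hol(\gt)=G_2$ generically while violating the $A$-condition everywhere, which is precisely why the paper cannot use it here. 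What the paper does instead is prove that the map from 7-jets of $F$ at the origin to $S^4(\cD_0^*)$ is \emph{surjective}: writing $F=q^2+f$ with $f$ vanishing to order 6, a direct computation with Nurowski's formula identifies $A$ at the origin with a nonzero constant multiple of $\left(\nabla^4\pa_q^2 f\right)(0)|_{\cD_0}$ (equations \eqref{Apartial}), so every binary quartic is realized; since the resultant \eqref{resultant} does not vanish identically on $S^4(\cD_0^*)$, the 3-degeneracy locus lies in a proper subvariety. Without this surjectivity, or some explicit witness outside the Leistner--Nurowski family (which you have not produced), your properness claim for the $A$-condition does not close. A minor further point: 3-degeneracy means the two cubics have a common \emph{real} root, which implies but is not equivalent to the vanishing of the resultant; this costs nothing since only containment in the resultant locus is needed, but your ``exactly when'' is not correct as stated.
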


The union of these two sets is therefore contained in a proper
Zariski closed set in the space of 7-jets at the origin, so its complement
is dense.  The Taylor expansion of $F$ beyond order 7 can be
chosen arbitrarily without affecting these conditions.  By
Theorem~\ref{holonomycriterion}, 
if at any point of $M$ the 7-jet of an $F$ representing $\cD$ lies in this
complement, then $\Hol(\gt)=G_2$.  In fact, it is 
sufficient that the conditions be violated at different points.   

The ambient metrics arising from distributions satisfying the
hypotheses in Theorem~\ref{holonomycriterion} thus form an
infinite-dimensional family of 
metrics whose holonomy is equal to $G_2$.  Unfortunately, these metrics  
are not complete.  They are ``global'' with respect to $M$, but
arise as power series in the variable $\rho$ in the last $\R$ factor whose
radius of convergence may be small.  

As explained in Chapter 4 of \cite{FG2}, the restriction of an ambient  
metric to $\{\rho>0\}$ or $\{\rho<0\}$ is a cone metric over a base which
is called a Poincar\'e metric $g_+$.  (This is another reason they are not
complete.)  The $G_2$ holonomy condition on $\gt$ can be   
reintepreted in terms of $g_+$.  Depending on the sign chosen for $\rho$, 
$g_+$ either has signature $(2,4)$ and is nearly K\"ahler of constant type
1, or has signature 
$(3,3)$ and is nearly para-K\"ahler of constant type 1.  In particular,
this gives new infinite-dimensional families of such metrics.  This and 
other consequences of parallel tractor extension in the Poincar\'e
metric setting will be the subject of a forthcoming paper by the second
author.    

The conditions in Theorem~\ref{holonomycriterion} are far from necessary
for $\Hol(\gt)=G_2$.  Our goal was to find simple conditions which could be 
verified at a point and which give a dense set of 2-plane fields for which  
$\Hol(\gt)=G_2$.  Nurowski's whole family of examples lies in our
complementary set even though almost all of them satisfy $\Hol(\gt)=G_2$.     
In fact, any $\cD$ in Nurowski's full 8-parameter family has the property
that 
the tensor $A$ is 2-degenerate at every point:  at each point 
there exists $0\neq X\in \cD$ such that $A(Y_1,Y_2,X,X)=0$ for all $Y_1$,
$Y_2\in \cD$.  

Theorem~\ref{holonomycriterion} is proved by quoting 
Theorem~\ref{holonomycontained} and then using arguments developed by 
Leistner-Nurowski to rule out the 
possibility that the holonomy is strictly contained in $G_2$. In order to
prove Theorem~\ref{holonomycontained}, we establish a result in a much more
general setting which we think is of independent interest.  We explain this  
next.        

The analog in conformal geometry of the Levi-Civita connection in
Riemannian geometry is the tractor connection.  On a conformal 
manifold $(M,c)$  of signature $(p,q)$, $p+q=n$, there is a canonical
rank $n+2$ vector bundle $\cT$, the standard tractor bundle.  It carries a 
metric of signature $(p+1,q+1)$ and a canonical connection, the normal
tractor connection, with respect to which the tractor metric is parallel.
The holonomy of the tractor connection is therefore a subgroup of
$O(p+1,q+1)$ and is 
referred to as the conformal holonomy of $(M,c)$.  Just as interesting 
classes of pseudo-Riemannian metrics can be described by holonomy 
reductions, interesting classes of conformal structures can be described 
by conformal holonomy reductions.  A result of
Hammerl-Sagerschnig \cite{HS} shows that Nurowski's conformal structures
have precisely such a characterization:  an oriented conformal
structure of signature $(2,3)$ arises from a generic 2-plane field $\cD$ if
and only if its conformal holonomy is contained in $G_2$.

Just as pseudo-Riemannian holonomy reductions are often characterized by
the existence of a parallel tensor, conformal holonomy
reductions are often characterized by the existence of a parallel
tractor (by which we here mean a parallel section of $\otimes^r \cT^*$ 
for some $r>0$).  Since  
$G_2$ is defined as the subgroup of $GL(7,\R)$ preserving a 3-form
compatible with a metric of signature $(3,4)$, the Hammerl-Sagerschnig   
holonomy criterion can be reinterpreted 
(as they do) as the condition that $(M,c)$ admit a parallel tractor 3-form
(i.e. a section of $\Lambda^3\cT^*$) compatible with the tractor metric. 
Likewise, in order to show that $\Hol(\gt)\subset G_2$, one needs to show
that there is a parallel 3-form on the ambient space compatible with 
$\gt$.  Thus one is led to the problem of constructing a parallel 
3-form on the ambient space given a parallel tractor 3-form.

There are several constructions of the standard tractor bundle and its
metric and connection.  These were originally defined by T. Y. Thomas in 
\cite{T} in language that predates the definition of a vector 
bundle.  The first modern treatment is \cite{BEG}.  The paper \cite{CG1}
explains the relation between tractors and the ambient construction.  
Recall that the ambient metric
$\gt$ is defined on an open subset $\cGt\subset \R_+\times M\times\R$
containing $\R_+\times M\times \{0\}$.  The
hypersurface $\cG=\R_+\times M\times \{0\}\subset \cGt$ can be viewed as an
$\R_+$-bundle over $M$.  A tractor (section of $\cT$) on $M$ can   
be regarded as a section of the bundle $T\cGt|_{\cG}$ over $\cG$ with a
particular homogeneity 
with respect to the $\R_+$-dilations.  The tractor metric and connection
can be realized as the restriction to $\cG$ of
the ambient metric $\gt$ and its Levi-Civita connection.  That is, the 
restriction to $\cG$ of a tensor
field on $\cGt$ homogeneous of the correct degree with respect to the
$\R_+$ dilations defines a tractor field on $M$.  The condition that the 
tractor field is parallel with respect to the tractor connection is 
precisely the condition that the restriction to $\cG$ of the tensor field
have zero covariant derivative with respect to the ambient connection when
the differentiations are 
taken in directions tangent to $\cG$.  Thus the problem described above of
constructing a parallel 3-form on the ambient space given a parallel
tractor 3-form amounts to extending such a ``tangentially parallel''
ambient 3-form defined on the hypersurface $\cG$ to a parallel 3-form on
$\cGt$.   

We prove a ``parallel tractor extension theorem'' of this nature for
general tractors irrespective of their rank, symmetry or algebraic 
type on conformal manifolds of any dimension and signature.  For smooth
odd dimensional conformal manifolds, the ambient metric 
is determined by the conformal structure to infinite order
along $\cG$.  For smooth even-dimensional conformal manifolds, it is only
determined to order $n/2-1$.  These indeterminacies in the ambient metric
are reflected in the statement of the parallel tractor extension
theorem.  

\begin{theorem}\label{main}
Let $(M,c)$ be a conformal manifold of dimension $n\geq 3$ and let $\gt$ be
an ambient metric for $(M,c)$.  Let $r\in \N$ and    
suppose $\chi\in\Gamma\left(\otimes^r\mathcal{T}^*\right)$ satisfies
$\nabla\chi =0$, where $\nabla$ denotes the tractor covariant derivative.   
\begin{itemize}
\item
If $n$ is odd, then $\chi$ has an ambient extension $\ct$
satisfying $\nt \ct = O(\rho^\infty)$.  
\item
If $n$ is even, then $\chi$ has an ambient extension satisfying 
$\nt \ct = O(\rho^{n/2-1})$.     
\end{itemize}
\end{theorem}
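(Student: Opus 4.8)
The plan is to place $\gt$ in a Fefferman--Graham normal form, say $\gt=2\rho\,dt^2+2t\,dt\,d\rho+t^2 g_\rho$ on $\R_+\times M\times\R$, so that $\cG=\{\rho=0\}$, the Euler (dilation) field is $T=t\pa_t$, and $\chi$ is represented on $\cG$ by an $r$-tensor $\ct_0$ whose tangential covariant derivatives vanish. The following identities hold for \emph{any} normal-form metric, independently of Ricci-flatness, and form the backbone of the argument: $\nt_X T=X$ for every $X$, whence $\Rt(\cdot,\cdot)T=0$ and every component of $\Rt$ carrying a $T$-index vanishes; the $\rho$-lines are null geodesics, so $\nt_{\pa_\rho}\pa_\rho=0$; and a tensor of the homogeneity weight corresponding to a section of $\otimes^r\cT^*$ automatically satisfies $\nt_T\ct=0$.

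I would define the candidate extension $\ct$ by two requirements: homogeneity of the correct weight in the $\R_+$-variable, which makes $\nt_T\ct=0$ identically, and parallel transport along the $\rho$-lines, $\nt_{\pa_\rho}\ct=0$, which determines $\ct$ uniquely from $\ct_0$ as a (formal, and in the real-analytic case convergent) power series in $\rho$. With this choice the only possibly nonzero components of $\nt\ct$ are the tangential ones $\nt_{\pa_i}\ct$, and on $\cG$ these vanish because $\chi$ is parallel. Commuting derivatives and using $\nt_{\pa_\rho}\ct=0$ yields the transport equation $\nt_{\pa_\rho}(\nt_{\pa_i}\ct)=\Rt(\pa_\rho,\pa_i)\cdot\ct$ with zero initial data, so $\nt\ct=O(\rho^k)$ is equivalent to $\Rt(\pa_\rho,\pa_i)\cdot\ct=O(\rho^{k-1})$; differentiating and again using $\nt_{\pa_\rho}\ct=0$, this reduces to the pointwise identities $(\nt_{\pa_\rho}^{\,m}\Rt)(\pa_\rho,\pa_i)\cdot\ct_0=0$ on $\cG$, to be verified for $m$ up to the relevant order.

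Establishing these identities is the heart of the matter, and is where $\Ric(\gt)=0$ enters. Parallelism tells us directly only that the tangential curvature annihilates $\ct_0$, i.e. the tractor curvature $\Omega$ satisfies $\Omega\cdot\ct_0=0$, together with all its tangential covariant derivatives $(\nabla\Omega)\cdot\ct_0=0,\ (\nabla^2\Omega)\cdot\ct_0=0,\dots$ obtained by differentiating this relation; the obstruction $\Rt(\pa_\rho,\pa_i)$ involves the transverse null direction and is not a priori controlled. The key device for converting transverse into tangential data is the identity $(\nt_Y\Rt)(X,T)=\Rt(Y,X)$, which follows by differentiating $\Rt(\cdot,\cdot)T=0$ and using $\nt_Y T=Y$. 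Combined with the second Bianchi identity (which trades a $\pa_\rho$-derivative for tangential derivatives up to curvature corrections) and with the contracted Bianchi identity together with $\Ric(\gt)=0$ (whose trace relations force the normal Taylor coefficients of $g_\rho$, and hence of $\Rt$, to be the universal expressions in the conformal curvature dictated by the Fefferman--Graham recursion), I expect to rewrite each $(\nt_{\pa_\rho}^{\,m}\Rt)(\pa_\rho,\pa_i)\cdot\ct_0$ as a universal polynomial in $\Omega$ and its tangential covariant derivatives acting on $\ct_0$. Every such expression then vanishes by parallelism, giving the required identities.

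The main obstacle is precisely this reduction, and it is also what produces the dichotomy: the Fefferman--Graham recursion expresses the normal jets of $\gt$ in terms of the conformal curvature only as far as $\Ric(\gt)=0$ can be solved, which is to all orders when $n$ is odd but only through order $\rho^{\,n/2-1}$ when $n$ is even, at which stage the undetermined trace-free part of $g_\rho$ and the obstruction tensor enter and the normal curvature jets cease to be expressible in tangential tractor data. Carrying the induction as far as the recursion permits then yields $\nt\ct=O(\rho^{\,k})$ with $k=\infty$ for $n$ odd and $k=n/2-1$ for $n$ even, as claimed; in the real-analytic, odd-dimensional setting of Theorem~\ref{holonomycontained}, the resulting $O(\rho^\infty)$ extension is in fact genuinely parallel.
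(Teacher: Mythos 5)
Your construction of the candidate extension and your reduction of the problem coincide exactly with the paper's: you extend by parallel transport along the $\rho$-lines, note that homogeneity plus straightness kills the $T$-component of $\nt\ct$, and reduce everything to the pointwise identities $(\nt_{\pa_\rho}^{\,m}\Rt)(\pa_\rho,\pa_i)\cdot\ct_0=0$ on $\cG$ (this is precisely the content of \eqref{commute}). The genuine gap is that you never prove these identities: you say you ``expect'' to rewrite each transverse curvature jet acting on $\ct_0$ as a universal polynomial in the tractor curvature $\Omega$ and its \emph{tangential} covariant derivatives acting on $\ct_0$, and then to invoke parallelism. No mechanism for this rewriting is given, and it is not a routine consequence of the tools you list. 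Already at $m=0$ the endomorphism $\Rt(\pa_\rho,\pa_i)\big|_{\cG}$ has components given by the Cotton tensor \emph{and} Bach-type tensors, whereas $\Omega$ contains only Weyl and Cotton; organizing the Bach-type terms as tangential derivatives of $\Omega$ acting on $\chi$ (so that $(\nabla^j\Omega)\cdot\chi=0$ applies) is exactly the nontrivial content, and it only gets harder as $m$ grows. Indeed the paper points out at the end of \S\ref{parext} that Theorem~\ref{main} is \emph{equivalent} to the integrability conditions $\big((\nt^k\Rt)|_\cG\big).\chi=0$; your plan reduces the theorem to (a strengthening of) these conditions and then stops, so the remaining work is the theorem itself. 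Note also that your intermediate claim -- expressibility of the transverse jets in purely tangential tractor data -- is strictly stronger than what is needed and is not established by Bianchi identities alone.

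It is worth seeing how the paper closes this loop, because the trick is different in kind from what you propose. Lemma~\ref{divergence} (contracted second Bianchi plus Ricci-flatness, valid for all $k$ when $n$ is odd and for strength $\leq n+1$ when $n$ is even) gives $(2k+1)\,\Rt_{IJA\infty,\infty\cdots\infty}=\Rt_{IJA}{}^p{}_{,p\infty\cdots\infty}$. Contracting into $\ct$ and using the inductive hypothesis, the transverse curvature jet acting on $\ct_0$ becomes a tangential divergence of (curvature contracted into $\ct$), which by \eqref{commute} is again a derivative of $\ct$ itself; expanding the final covariant derivatives in Christoffel symbols yields the self-referential identity $(2k+1)\,\ct_{\mathcal{I},a\infty\cdots\infty}=(n-1)\,\ct_{\mathcal{I},a\infty\cdots\infty}$, i.e. $(2k+2-n)\,\ct_{\mathcal{I},a\infty\cdots\infty}=0$. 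Thus the curvature identities are never established independently of the extension -- they emerge from the induction -- and the odd/even dichotomy comes from the algebraic factor $2k+2-n$ (together with the strength restriction in Lemma~\ref{divergence}), not, as your sketch suggests, merely from the Fefferman--Graham recursion ceasing to determine the normal jets at order $n/2$. The distinction is not pedantic: Proposition~\ref{beyondcritical} and the analysis in \S\ref{neven} show that the obstruction at the critical order is a genuine curvature condition which can fail for \emph{every} choice of the order-$n/2$ ambiguity, so ``the jets become undetermined'' is not by itself the reason the argument terminates there.
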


\noindent
The proof shows that for $n$ odd, if $(M,c)$ and $\gt$ are real  
analytic, then $\ct$ 
may be taken to be real-analytic so that $\nt\ct=0$ in a neighborhood
of $\cG$.   

Theorem~\ref{main} was proved for $n$ odd in \cite{Go3} for the case $r=1$
using an argument based on ``harmonic extension''.  The same argument
also proves Theorem~\ref{main} for $n$ even and $r=1$.  Results
essentially containing Theorem~\ref{main} in case $c$ contains an Einstein
metric are proved in \cite{Leit1}, \cite{Leis}; see the discussion in
\S\ref{neven} below.  

Our proof of Theorem~\ref{main} goes by first extending $\chi$ by parallel 
translation along the lines $\rho\mapsto (z,\rho)$ for each $z\in 
\cG$. 
It must be shown that this parallel translation preserves the 
vanishing covariant derivatives in the tangential directions.  This
involves commutation arguments which use consequences to high order of
the homogeneity and Ricci-flatness of the ambient metric.  

In \cite{Br2}, Bryant showed that for generic 3-plane fields on 6-manifolds 
there is a construction analogous to Nurowski's construction:  a  
generic 3-plane field induces a conformal structure of signature $(3,3)$ on
the same 6-manifold.    
It is tempting to speculate about the possibility of constructing signature
$(4,4)$ metrics of holonomy $\operatorname{Spin}(3,4)$ as ambient metrics
of such conformal manifolds.  This would require finding extensions 
parallel to infinite order for $n$ even in Theorem~\ref{main}.  

Partly with such considerations in mind, we investigate here
some beginning cases of what can happen in even dimensions concerning  
parallel extension beyond order $n/2-1$.  There are   
several issues.  One complication is that in order to construct ambient  
metrics which are Ricci-flat to higher order, it is in general necessary to 
include log terms in the expansion of $\gt$.  To avoid this complication we
mostly restrict attention here to the case of vanishing obstruction tensor,
for which log terms do not enter.  In this case, the proof of  
Theorem~\ref{main} shows that $n/2$ is the critical order:  if a parallel
tractor has an ambient extension satisfying $\nt \ct = O(\rho^{n/2})$, 
it has an extension satisfying $\nt \ct = O(\rho^\infty)$.  Another
complication in even dimensions is that higher-order 
ambient metrics are no longer determined by the conformal structure alone:
there is an ambiguity at order $n/2$ in the ambient metric.  So whether or
not a parallel tractor 
has a parallel ambient extension may depend on which ambient metric one
chooses.  In \S\ref{neven}  
we investigate this for three classes of conformal structures 
admitting parallel tractors:  conformal classes containing an
Einstein metric, Poincar\'e-Einstein conformal classes, and Fefferman
conformal structures associated to nondegenerate hypersurfaces in $\C^n$.   
We find that for conformal classes containing an Einstein metric, there is
always a unique choice of ambiguity for which there is a 
parallel ambient extension (Proposition~\ref{canonicalparallel}).  For 
Poincar\'e-Einstein conformal classes we give necessary and 
sufficient conditions on the Poincar\'e-Einstein metric for there to exist 
an ambient metric for which the parallel 
tractor has a parallel ambient extension (Proposition~\ref{evenpe}), 
and using a recent result of  
Juhl (\cite{J}) we give a formula for the distinguished ambient metric and
the parallel extension when they exist 
(Proposition~\ref{peextension}).  In particular,  Proposition~\ref{evenpe}
gives examples of parallel tractors which have no ambient extension parallel
to order $n/2$ for any choice of ambiguity.  For Fefferman conformal
structures we   
show that the parallel tractor 2-form has a parallel extension for
infinitely many choices of ambiguity in the ambient metric
(Proposition~\ref{crextension}).    

The organization of the paper is as follows.  In \S\ref{ambtrac} we review
background material concerning ambient metrics and tractors.  In
\S\ref{parext} we prove Theorem~\ref{main}.  As a consequence of 
Theorem~\ref{main} we derive a sequence of integrability conditions to  
higher and higher order which must be satisfied by any parallel
tractor.  
\S\ref{neven} studies parallel extension beyond
order $n/2 -1$ in even dimensions as described above.  We formulate a
general condition on a tractor which we call ``determining'' which  
guarantees in the case of vanishing obstruction tensor that there is at
most one choice of ambient metric with respect 
to which the parallel tractor has an ambient extension parallel to order
$n/2$. 
In \S\ref{G2holonomy} we discuss background concerning generic 2-plane
fields, Nurowski's conformal structures, and the work of Leistner-Nurowski
and Hammerl-Sagerschnig.  We show how Cartan's tensor $A$ can be realized
as a piece of the Weyl curvature of Nurowski's conformal structure and we
prove Theorems~\ref{holonomycontained} and \ref{holonomycriterion} and
Proposition~\ref{Fhol}.  \S\ref{appendix} is an appendix in which we
give our conventions concerning $G_2$, collect facts about Cartan's
connection and curvature in the form given by Nurowski \cite{N1}, and prove
two facts about the curvature which are used in \S\ref{G2holonomy}.   

Some of the results in this paper are contained in the Ph.D. thesis of the
second author (\cite{W}).  This thesis includes more details and further
results in certain directions.

\section{Ambient Metrics and Tractors}\label{ambtrac}
In this section we review background material concerning ambient metrics
and tractors.  The main reference for the material on ambient metrics is
\cite{FG2}, and 
references for the approach taken here for tractors are \cite{CG1} and 
\cite{BG}. 

Let $(M,c)$ be a conformal manifold of dimension $n\geq 3$ and signature
$(p,q)$, $p+q=n$.  This means that $c$ is an equivalence class of metrics
under the 
relation $g\sim \Omega^2g$ for $0<\Omega\in C^\infty(M)$.  The
metric bundle of $(M,c)$ is by definition $\cG:=\{(x,g_x):x\in M, g\in 
c\}\subset S^2T^*M$.  Let $\pi:\cG\rightarrow M$ denote the projection.  
There is an action of $\R_+$ on $\cG$ defined by 
$\delta_s(x,g_x)=(x,s^2g_x)$ for $s\in \R_+$.  Let 
$T=\frac{d}{ds}\delta_s|_{s=1}$ be the infinitesimal generator of the 
$\R_+$ action.  There is a tautological symmetric 2-tensor 
${\bf g}_0$ on $\cG$ defined for $X$, $Y\in T_{(x,g_x)}\cG$ by 
${\bf g}_0(X,Y)=g_x(\pi_*X,\pi_*Y)$. 

Consider the space $\cG\times \R$.  The variable in the
$\R$ factor is usually denoted $\rho$.  The dilations $\delta_s$ extend to
$\cG\times \R$ acting in the $\cG$ factor, and we denote also by $T$ the
infinitesimal generator on $\cG\times\R$.  The map 
$\iota:\cG\rightarrow\cG\times\R$ defined for $z\in \cG$ by
$\iota(z)=(z,0)$ imbeds $\cG$ as a hypersurface in $\cG\times\R$.     

A smooth metric $\gt$ of signature 
$(p+1,q+1)$ on a dilation-invariant open neighborhood $\cGt$ of
$\cG\times\{0\}$ in $\cG\times \R$ is said to be a pre-ambient metric for
$(M,c)$ if it satisfies the following two conditions: 
\begin{enumerate}
\item[(1)] $\delta_s^* \gt =s^2 \gt\quad$ for $s\in \R_+$;
\item[(2)] $\iota^* \gt={\bf g}_0$.
\end{enumerate}
A pre-ambient metric $\gt$ is said to be straight if for each $p\in\cGt$
the parametrized curve 
$s\mapsto \delta_sp$ is a geodesic for $\gt$.  This is equivalent to the
condition that $\nt T =Id$ where $Id$ denotes the identity endomorphism and
$\nt$ the Levi-Civita connection of $\gt$; 
see Propositions 2.4 and 3.4 of \cite{FG2}.    

If $n$ is odd, an ambient metric for $(M,c)$ is a straight pre-ambient
metric for $(M,c)$ such that $\Ric(\gt)$ vanishes to infinite order on
$\cG\times \{ 0 \}$.  (The straightness condition is automatic to infinite
order, but it is convenient to include it in the definition.)  There exists
an ambient metric for $(M,c)$ and it is unique to 
infinite order up to pullback by a diffeomorphism defined on a
dilation-invariant neighborhood of $\cG\times\R$ which commutes with
dilations and which restricts to the identity on $\cG\times\{0\}$.    
If $M$ is a real-analytic manifold and there is a real-analytic metric in
the conformal class,
then there exists a real-analytic 
ambient metric for $(M,c)$ satisfying $\Ric(\gt)=0$ on some
dilation-invariant $\cGt$ as above.

In order to formulate the definition of ambient metrics for $n$ even, 
if $S_{IJ}$ is a symmetric 2-tensor field on an 
open neighborhood of $\cG \times \{ 0 \}$ in $\cG \times  \R$ and 
$m \geq 0$, we write $S_{IJ} = O^+_{IJ}( \rho^m)$ if
$S_{IJ} = O(\rho^m)$ and 
for each point $z\in \cG$, the symmetric 2-tensor $(\iota^*(\rho^{-m}S))(z)$  
is of the form 
$\pi^*s$ for some symmetric 2-tensor $s$ at $x=\pi(z)\in M$ satisfying 
$\operatorname{tr}_{g_x}s = 0$.  The symmetric 2-tensor $s$ is allowed to
depend on $z$, not just on $x$.  If $n$ is even, an ambient metric for
$(M,c)$ is a straight pre-ambient metric 
such that $\Ric(\gt)=O^+_{IJ}( \rho^{n/2-1})$.
There exists an ambient metric for $(M,c)$ and it is unique up to addition 
of a term which is $O^+_{IJ}( \rho^{n/2})$ and up to pullback by a
diffeomorphism defined on a dilation-invariant 
neighborhood of 
$\cG\times\R$ which commutes with dilations and which restricts to the
identity on $\cG\times\{0\}$.  

The diffeomorphism invariance of ambient metrics can be broken by putting
them into a normal form with respect to a choice of metric $g$ in the
conformal class.  Observe first that the choice of $g\in c$ determines a 
trivialization of the bundle $\cG\rightarrow M$ by identifying 
$(t,x)\in \R_+\times M$ with $(x,t^2g_x)\in \cG$.  Under this
identification the tautological tensor ${\bf g}_0$ takes the form
${\bf g}_0=t^2 g$, where we omit writing $\pi^*$ for the pullback of a
tensor on $M$ to $\cG$, and we have $T=t\pa_t$.  There is an induced   
identification $\cG\times \R \cong \R_+\times M\times \R$.
A pre-ambient metric $\gt$ is said to be in normal form 
with respect to $g\in c$ if it satisfies the following three conditions: 
\begin{enumerate}
\item[(1)] Its domain of definition $\cGt$ has the
property that for each $z\in \cG$, the set of $\rho\in \R$ such that 
$(z,\rho)\in\cGt$ is an open interval $I_z$ containing $0$;
\item[(2)] For each $z\in \cG$, the parametrized curve
$I_z\ni \rho\mapsto (z,\rho)$ is a geodesic for $\gt$; 
\item[(3)] Under the identification 
$\cG\times \R\cong \R_+\times M \times \R$ induced by $g$, at each point 
$(t,x,0)\in \cG\times \{0\}$, $\gt$ takes the form 
$\gt =\, t^2 g +2tdtd\rho\,.$
\end{enumerate}
A straight
pre-ambient metric is in normal form with respect to $g$ if and only if it
has the form
\begin{equation}\label{normalform}
\gt=2tdtd\rho +2\rho\,dt^2+t^2g_\rho
\end{equation}
relative to the identification $\cG\times\R\cong\R_+\times M\times \R$
induced by $g$, 
where $g_\rho$ is a smooth family of metrics on $M$ parametrized by $\rho$
satisfying $g_0=g$.  Any pre-ambient metric can be put into normal form
with respect to a choice 
of $g\in c$ by a unique diffeomorphism which commutes with the dilations
and restricts to the identity on $\cG\times \{0\}$.  
For $n$ odd, the existence and uniqueness assertion 
for ambient metrics in normal form states that given a metric $g$ on $M$,  
there exists an ambient metric $\gt$ for $(M,[g])$ in normal form with
respect to $g$, and $g_\rho$ in \eqref{normalform} is uniquely determined
to infinite 
order at $\rho=0$.  For $n$ even, the corresponding assertion is that
$g_\rho$ is uniquely determined mod $O(\rho^{n/2})$ and also 
$\tr_g\left(\pa_\rho^{n/2}g_\rho|_{\rho=0}\right)$ is determined.  In all
dimensions $n\geq 3$ one has 
\begin{equation}\label{initial}
g_\rho = g + 2P\rho +O(\rho^2)
\end{equation}
where $P$ denotes the Schouten tensor of $g$, defined by
$$
(n-2)P_{ij}=R_{ij}-\frac{R}{2(n-1)}g_{ij}.
$$

For $n$ even a conformally invariant tensor, the ambient obstruction 
tensor, obstructs the existence of smooth solutions to
$\Ric(\gt)=O(\rho^{n/2})$.  However if the obstruction tensor vanishes then
there are smooth solutions to higher order.  (In general there are 
higher-order solutions with expansions involving log terms; see Theorem
3.10 of \cite{FG2}.)  If $(M,c)$ is a 
conformal manifold of even dimension $n\geq 4$, by an infinite-order
ambient metric we will mean a smooth straight  
pre-ambient metric for which $\Ric(\gt)$ vanishes to infinite order at
$\rho=0$.  If $(M,c)$ admits an infinite-order ambient metric, then it has
vanishing obstruction tensor.  The Taylor expansion of an infinite-order
ambient metric in normal form is no longer   
determined solely by the initial metric $g$.  It follows from Theorem 3.10
of \cite{FG2} that there is a natural pseudo-Riemannian invariant 1-form  
$D_i(g)$ depending on a metric $g$, so that if $g$ has vanishing
obstruction 
tensor and $\ka$ is a smooth symmetric 2-tensor on $M$ which is trace-free
with respect to $g$ and satisfies $\ka_{ij},^j=D_i(g)$ where the divergence
is with respect to the Levi-Civita connection of $g$, then there is an  
infinite-order  
ambient metric in normal form with respect to $g$ such that
$\tf\left(\pa_\rho^{n/2}g_\rho|_{\rho=0}\right)=\ka$.  Here $\tf$ denotes the
trace-free part with respect to $g$.  Moreover, these conditions uniquely
determine $g_\rho$ to infinite order at $\rho =0$ and all infinite-order  
ambient metrics in normal form relative to $g$ arise from such a 
$\ka$.  We will call $\ka$ the ambiguity in the infinite-order ambient   
metric.  

We will use capital Latin indices to label objects on $\cG\times\R$.  
Upon choosing a metric $g\in c$ we have the splitting $\cG\times\R\cong
\R_+\times M\times\R$.  We will use a $0$ index for the $\R_+$
factor, lower case Latin indices for the $M$ factor, and an $\infty$ index
for the $\R$ factor.  

For the purposes of this paper it will be convenient to define the tractor
bundle and connection in ambient terms.  Such a formulation was given in  
\cite{CG1}, \cite{BG} where further discussion and details may be found. 

Let $(M,c)$ be a conformal manifold with metric bundle
$\cG\stackrel{\pi}{\rightarrow} M$.  For $x\in M$, write
$\cG_x=\pi^{-1}(\{x\})$ for the fiber of $\cG$ over $x$.  
Recall that the bundle $\cD(w)$ of conformal densities of weight $w\in \C$
has fiber 
$\cD_x(w)=\{f:\cG_x\rightarrow \C:(\delta_s)^*f=s^wf,\;s>0$\}, so that  
sections of $\cD(w)$ on $M$ are functions on $\cG$ homogeneous of
degree $w$. 
A metric $g$ in the conformal class is a section of $\cG$, so if $f$ is a
section of $\cD(w)$, then 
$f\circ g$ is a function on $M$.  Under conformal change $\gh=\Omega^2g$,
we have $f\circ \gh = \Omega^wf\circ g$.  

The standard tractor bundle of $(M,c)$ and its metric and connection can be 
similarly defined in terms of homogeneous vector fields on $\cG_x$. 
Identify $\cG$ with the subset $\cG\times \{0\}\subset
\cG\times \R$ via the map $\iota$.  
Let $\gt$ be an ambient metric for $(M,c)$ defined on 
a dilation-invariant open neighborhood $\cGt$ of $\cG$ in $\cG\times\R$. 
Consider the rank $n+2$ vector bundle $\cT\rightarrow M$ with fiber     
\begin{equation}\label{tracdef}
\cT_x=\left\{U\in\Gamma(T\cGt\,\big{|}_{\cG_x}):
(\delta_s)_*U=sU,\;s>0\right\}.     
\end{equation}
So a section of $\cT$ on $M$ is the same as a section $U$ of
$T\cGt\,\big{|}_\cG$ on $\cG$ satisfying $(\delta_s)_*U=sU$, or
equivalently $(\delta_s)^*U=s^{-1}U$.  If $U$, $W\in \cT_x$, 
then $\gt(U,W)$ is
homogeneous of degree 0 on $\cG_x$, i.e. $\gt(U,W)\in \R$.  This therefore
defines a metric $h$ of   
signature $(p+1,q+1)$ on $\cT$.  Since $T$ is homogeneous of degree 0 with
respect to the $\delta_s$, it defines a section of 
$\cT(1):=\cT\otimes \cD(1)$.  But the set of $U$ which at each point of
$\cG_x$ is a 
multiple of $T$ constitutes a subbundle of $\cT$ which we denote 
$\text{span}\{T\}$.  Its orthogonal complement $\text{span}\{T\}^\perp$ is 
the set of $U$ which at each point of $\cG_x$ is tangent 
to $\cG$.  This gives the filtration
\begin{equation}\label{filtration}
0\subset \text{span}\{T\}\subset \text{span}\{T\}^\perp\subset \cT.
\end{equation}

In order to realize the tractor connection, observe that 
$\pi_*:T\cG\rightarrow TM$ induces a realization of the 
tangent bundle $TM$ as
$$
T_xM=\left\{V\in\Gamma(T\cG\,\big{|}_{\cG_x}):
(\delta_s)_*V=V,\;s>0\right\}\Big{/} \text{span}\{T\},  
$$
where now $\text{span}\{T\}$ really means the constant multiples of $T$.
If $v\in T_xM$, choose $V\in\Gamma(T\cG\,\big{|}_{\cG_x})$ representing
$v$.  If $U$ is a 
section of $\cT$ near $x$, define the tractor connection $\nabla$ by
$\nabla_vU = \nt_VU$.  Observe first that the right-hand side makes sense
since $U\in \Gamma(T\cGt\,\big{|}_\cG)$ and $V$ is tangent to $\cG$.  To
see that  
the right-hand side is independent of the choice of $V$ representing $v$ it
suffices to show that $\nt_TU=0$.  Now
$$
\nt_TU=\nt_UT+[T,U]=\nt_UT+\cL_TU
$$
where $\cL$ denotes the Lie derivative.  But $\cL_TU=-U$ by the homogeneity
of $U$ and $\nt_UT=U$ since $\gt$ is straight.  Thus $\nt_TU=0$ as 
desired.  Finally, 
$\nt_VU$ has the same homogeneity as $U$ since $V$ and $\nt$ are invariant
under the $\delta_s$:  $V$ is invariant by hypothesis and $\nt$ is 
invariant since $\delta_s$ is a homothety of $\gt$.  Thus $\nt_VU$ is a
well-defined section of $\cT$ and it is easily checked that this defines a
connection.  The tractor metric is parallel with respect to $\nabla$ since
$\nt\gt=0$.  But the filtration \eqref{filtration} is not
$\nabla$-parallel.  

Note that this realization of the tractor bundle and connection depends on 
the choice of ambient metric $\gt$.  If $\gt$ is changed by a
diffeomorphism, one obtains different but equivalent realizations.  

It is shown in \cite{CG1} that this formulation of the standard tractor
bundle and connection agrees with other definitions by using a functorial 
characterization of tractor bundles.  For our purposes it will be 
useful to see this in terms of the splitting induced by a     
choice of $g$.  As we did with conformal densities, we associate to  
$U\in \Gamma(\cT)$ the map $U\circ g$ defined on $M$, for which 
$(U\circ g)(x)\in T_{(x,g_x)}\cGt$.  We decompose 
$T_{(x,g_x)}\cGt$ via the splitting in which $\gt$ is
in normal form with respect to $g$.  That is, after composing with a 
diffeomorphism if necessary, we assume that $\gt$ is in normal form with
respect to $g$.  The splitting $\cG\times\R\cong
\R_+\times M\times\R$ induces a splitting 
$T(\cG\times\R)\cong T\R_+\oplus TM\oplus T\R$.  Via the trivializations of 
$T\R_+$ and 
$T\R$ induced by $\pa_t$ and $\pa_\rho$, resp., $U\circ g$ is expressed 
in the form $(U^0,U^i,U^\infty)$, where $U^0$ and $U^\infty$
are functions on $M$ and $U^i$ is a vector field on $M$.  In terms of
coordinates $(t,x,\rho)$ with respect to which $\gt$ is in normal form,
we are simply writing $U\circ g = U^0\pa_t +U^i\pa_{x^i}+U^\infty\pa_\rho$.   
In this way, relative to the choice of $g$ we represent a section 
$U\in\Gamma(\cT)$ by the triple $(U^0,U^i,U^\infty)$.  Recalling that
$t=1$ 
on points of $\cG$ of the form $(x,g_x)$, it is evident from condition (3)
in the definition of normal form that the tractor metric is given by 
$h(U,U) = 2U^0U^\infty  + g_{ij}U^iU^j$.   

Suppose now we make a conformal change to $\gh=e^{2\Up}g$.  
The ambient metric $\gt$ can be put into normal form relative to $\gh$ by
pulling back by a homogeneous diffeomorphism.  Arguing as in the proof of
Proposition 6.5 of \cite{FG2}, one can identify the Jacobian on $\cG$ of
the diffeomorphism  and thus calculate the relation between the
representation $(\wh{U}^0,\wh{U}^i,\wh{U}^\infty)$ of $U$ 
with respect to $\gh$ and that with respect to $g$.  The result is: 
$$
\begin{pmatrix}
\wh{U}^0\\
\wh{U}^i\\
\wh{U}^\infty
\end{pmatrix}
=
\begin{pmatrix}
e^{-\Up}&0&0\\
0&e^{-\Up}&0\\
0&0&e^{\Up}
\end{pmatrix}
\begin{pmatrix}
1&-\Up_j&-\tfrac12 \Up_k\Up^k\\
0&\delta^i{}_j&\Up^i\\
0&0&1
\end{pmatrix}
\begin{pmatrix}
U^0\\
U^j\\
U^\infty
\end{pmatrix}.
$$
This is the identification used in the construction of the
standard tractor bundle in \cite{BEG}, so shows the agreement of these 
constructions. 

The tractor connection can also be expressed in terms of the splitting.  
It is straightforward to calculate the Christoffel symbols of a metric of
the form \eqref{normalform}.  One obtains:
\begin{gather}\label{cnr}
\begin{gathered}
\Gat_{IJ}^0 = 
\left(
\begin{matrix}
0&0&0\\
0&-\frac12 tg_{ij}'&0\\
0&0&0
\end{matrix}
\right)\\
\Gat_{IJ}^k = 
\left(
\begin{matrix}
0&t^{-1}\delta_j{}^k&0\\
t^{-1}\delta_i{}^k&\Ga_{ij}^k&\frac12 g^{kl}g_{il}'\\ 
0&\frac12 g^{kl}g_{jl}'&0 
\end{matrix}
\right)\\
\Gat_{IJ}^\nf = 
\left(
\begin{matrix}
0&0&t^{-1}\\
0&-g_{ij} +\rho g_{ij}'&0\\
t^{-1}&0&0
\end{matrix}
\right).
\end{gathered}
\end{gather}
(See (3.16) of \cite{FG2}.)  
Here all $g_{ij}$ and $g^{ij}$ refer to $g_\rho$, $'$ denotes 
$\pa_\rho$, $\Gamma_{ij}^k$ denotes the Christoffel symbol 
of the metric $g_\rho$ with $\rho$ fixed, and the blocks correspond to the
splittings $I\leftrightarrow (0,i,\nf)$, $J\leftrightarrow (0,j,\nf)$.  
In order to represent $\nabla_vU=\nt_VU$ in terms of the splitting with
respect to $g$, we evaluate \eqref{cnr} at $\rho=0$, $t=1$ and consider
only the $I=i$ components.  Recalling \eqref{initial}, this gives for the 
tractor Christoffel symbols:
\begin{equation}\label{tracchrist}
\begin{split}
\Gat_{iJ}^0 &= 
\left(
\begin{matrix}
0&-P_{ij}&0
\end{matrix}
\right)\\
\Gat_{iJ}^k &= 
\left(
\begin{matrix}
\delta_i{}^k&\Ga_{ij}^k&P_i{}^k
\end{matrix}
\right)\\
\Gat_{iJ}^\nf &= 
\left(
\begin{matrix}
0&-g_{ij}&0  
\end{matrix}
\right).
\end{split}
\end{equation}
The tractor covariant derivative is given by
$\nabla_iU^K=\pa_iU^K+\Gat_{iJ}^KU^J$, or equivalently
\begin{equation}\label{tracconn}
\nabla_i
\begin{pmatrix}
U^0\\
U^k\\
U^\infty
\end{pmatrix}
=
\begin{pmatrix}
\nabla_i U^0-P_{ij}U^j\\
\nabla_i U^k+\delta_i{}^kU^0+P_i{}^kU^\nf\\
\nabla_iU^\infty-U_i
\end{pmatrix}.
\end{equation}
On the right-hand side, $\nabla_i U^0$ and $\nabla_i U^\nf$ denote the
exterior derivative on functions and $\nabla_i U^k$ the Levi-Civita
connection of $g$ on vector fields on $M$.  Equation \eqref{tracconn} is
taken as the definition of the tractor connection in \cite{BEG}, so
the ambient construction produces the usual (normal) tractor connection.   

\section{Parallel Extension}\label{parext}
In this section we prove 
Theorem~\ref{main}.  We will be dealing primarily with cotractors.  The
realization dual to \eqref{tracdef} is  
$$
\cT^*_x=\left\{\eta\in\Gamma(T^*\cGt\,\big{|}_{\cG_x}):
(\delta_s)^*\eta=s\eta,\;s>0\right\}
$$
so that $\eta(U)$ is homogeneous of degree 0 on $\cG_x$.  Hence if $r\in
\N$, we realize sections of $\otimes^r\cT^*$ as   
$\chi\in\Gamma(\otimes^r T^*\cGt\,\big{|}_{\cG})$ satisfying  
$(\delta_s)^*\chi=s^r\chi$.  Any such $\chi$ satisfies $\nt_T\chi=0$ and
the cotractor connection is realized by $\nabla_v\chi=\nt_V\chi$
by analogy with the discussion for tractors in \S\ref{ambtrac}.  We use the
splitting for $\cT^*$ dual to the one above:  $\chi\in \cT^*$ is
represented with respect to $g\in c$ by 
$\chi=(\chi_0,\chi_i, \chi_\nf)$ 
if $\chi\circ g=\chi_0dt+\chi_idx^i+\chi_\nf d\rho$.

If $\chi\in\Gamma(\otimes^r\cT^*)$, we say that 
$\ct\in\Gamma(\otimes^rT^*\cGt)$ is an ambient extension of $\chi$ if
$\delta_s^*\ct=s^r\ct$ and $\ct|_{\cG}=\chi$.  Clearly if $\nt\ct=0$ then
restricting to $\rho=0$ and to differentiations
tangent to $\cG$, it follows that $\nabla\chi=0$.  
Theorem~\ref{main} asserts that any parallel tractor admits an ambient 
extension which is as parallel as one can expect given the
indeterminacy in the ambient metric.   

Before beginning the proof of Theorem~\ref{main}, observe that uniqueness
of the asserted extension is easy:  if $n$ is odd then 
$\ct$ is unique to infinite order and if   
$n$ is even then $\ct \mod O(\rho^{n/2})$ is uniquely determined.  This
follows just from the fact that $\nt_\nf\ct$ vanishes to the stated order
by successively differentiating with respect to $\rho$ at $\rho =0$ the
vanishing condition applied to the difference.  (See the proof of
Proposition~\ref{beyondcritical} below.)  
Note also that since $\nt\ct$ depends on first derivatives of $\gt$, 
if $n$ is even then the indeterminacy of $\gt$ at order $n/2$ enters into 
$\nt\ct$ at order $n/2$.  This is the subject of \S\ref{neven}.  

The proof of Theorem~\ref{main} uses properties of the covariant
derivatives of the curvature tensor of an ambient metric.  We denote the
curvature tensor of a pre-ambient metric by $\Rt$ and covariant
derivatives with respect to its connection by indices preceded by a comma.
Proposition 6.1 of  
\cite{FG2} asserts that the curvature tensor of any straight pre-ambient
metric satisfies for $r\geq 0$
\begin{gather}\label{curv}
\begin{gathered}
T^L\Rt_{IJKL,M_1\cdots M_r}= 
-\sum_{s=1}^r \Rt_{IJKM_s,M_1\cdots \widehat{M_s} \cdots M_r}\\
T^P\Rt_{IJKL,PM_{1}\cdots M_r}= 
-2\Rt_{IJKL,M_1\cdots M_r} - 
\sum_{s=1}^r \Rt_{IJKL,M_s M_{1}\cdots
  \widehat{M_s}\cdots M_r}.  
\end{gathered}
\end{gather}
The empty sum on the right-hand sides is interpreted as 0 in case $r=0$.  
Suppose next that $\gt$ is an ambient metric in normal form.  The indices
can be specialized according to the normal form splitting $I\leftrightarrow
(0,i,\nf)$.  
\begin{lemma}\label{divergence}
If $\gt$ is an ambient metric in normal form, then at $\rho=0$, $t=1$ we
have  
$$
(2k+1)\Rt_{IJA\infty,\,}{}_{\underbrace{\scriptstyle{\infty\cdots\infty}}_{k-1}}
=\Rt_{IJA}{}^p{}_{,p\underbrace{\scriptstyle{\infty\cdots\infty}}_{k-1}}.
$$
This holds for all $k\geq 1$ if $n$ is odd and for $IJA$ and $k$ satisfying  
$\|IJA\|+2k\leq n+1$ if $n$ is even, where $\|0\|=0$, $\|i\|=1$ for $1\leq
i\leq n$, $\|\nf\|=2$, and $\|IJA\|=\|I\|+\|J\| +\|A\|$.  
\end{lemma}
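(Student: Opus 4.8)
The plan is to combine the two homogeneity identities \eqref{curv} with the contracted second Bianchi identity and the Ricci-flatness of $\gt$, working throughout at the base point $\rho=0$, $t=1$. First I would record the relevant values there. From the normal form \eqref{normalform} one computes directly that at $\rho=0$, $t=1$ the only components of the inverse metric that enter a contraction on the tangential index pair are $\gt^{0\nf}=\gt^{\nf 0}=1$ and $\gt^{ij}=g^{ij}$, while $\gt^{00}=\gt^{\nf\nf}=0$ and the remaining off-diagonal entries vanish. Likewise $T=t\pa_t$ has $T^0=1$ with all other components zero, so contracting $T^L$ into any slot of a tensor simply selects its $0$-component.

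With these in hand, I would expand the full trace of the differentiated curvature on its fourth index. Using only the nonzero inverse-metric components above,
\begin{equation*}
\gt^{PQ}\Rt_{IJAQ,P\underbrace{\scriptstyle{\nf\cdots\nf}}_{k-1}}
=\Rt_{IJA\nf,0\underbrace{\scriptstyle{\nf\cdots\nf}}_{k-1}}
+\Rt_{IJA0,\underbrace{\scriptstyle{\nf\cdots\nf}}_{k}}
+\Rt_{IJA}{}^p{}_{,p\underbrace{\scriptstyle{\nf\cdots\nf}}_{k-1}},
\end{equation*}
the last summand being exactly the right-hand side of the asserted identity. The first two summands are where $T$ does its work: since $T^0=1$, the second equals $T^L\Rt_{IJAL,\nf\cdots\nf}$ (with $k$ indices $\nf$) and the first equals $T^P\Rt_{IJA\nf,P\nf\cdots\nf}$ (with $k-1$ indices $\nf$ after $P$). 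Applying the first identity of \eqref{curv} with all differentiated indices equal to $\nf$ evaluates the second summand as $-k\,\Rt_{IJA\nf,\underbrace{\scriptstyle{\nf\cdots\nf}}_{k-1}}$, since every term of its sum coincides; applying the second identity of \eqref{curv} the same way evaluates the first summand as $\bigl(-2-(k-1)\bigr)\Rt_{IJA\nf,\underbrace{\scriptstyle{\nf\cdots\nf}}_{k-1}}=-(k+1)\Rt_{IJA\nf,\underbrace{\scriptstyle{\nf\cdots\nf}}_{k-1}}$.

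It then remains to show that the trace on the left vanishes at $\rho=0$, $t=1$ in the stated range; for then collecting the coefficients $-(k+1)-k$ against the tangential trace gives $(2k+1)\Rt_{IJA\nf,\cdots}=\Rt_{IJA}{}^p{}_{,p\cdots}$ at once. Since the ambient metric is parallel and the contracted index $P$ is the innermost derivative, I can pull $\gt^{PQ}$ through the outer $\nf$-differentiations and rewrite the trace as $\bigl(\nabla^P\Rt_{IJAP}\bigr)_{,\nf\cdots\nf}$; the contracted second Bianchi identity then identifies $\nabla^P\Rt_{IJAP}=-\nabla_I\Ric_{AJ}+\nabla_J\Ric_{AI}$, a first derivative of the ambient Ricci tensor. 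For $n$ odd, $\Ric(\gt)=O(\rho^\infty)$, so this quantity and all its covariant $\nf$-derivatives vanish at $\rho=0$, and the identity holds for every $k\geq 1$.

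The main obstacle is the even-dimensional case, where $\Ric(\gt)=O^+_{IJ}(\rho^{n/2-1})$ vanishes only to finite order, so the task becomes to convert this into the vanishing of the specific component $\bigl(\nabla^P\Rt_{IJAP}\bigr)_{,\underbrace{\scriptstyle{\nf\cdots\nf}}_{k-1}}$ at $\rho=0$. This is governed by the orders to which the individual components of $\Ric(\gt)$ and their covariant derivatives vanish there, and the weight $\|\cdot\|$ with $\|0\|=0$, $\|i\|=1$, $\|\nf\|=2$ is precisely the device that records, index type by index type, the contribution to that order; indeed the tensors in the identity carry total index weight $\|IJA\|+2k$ (counting $2$ for the fourth curvature index $\nf$ and for each of the $k-1$ differentiations), so the hypothesis is exactly that this weight not exceed $n+1$. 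A careful tracking of orders then shows the trace vanishes at $\rho=0$ throughout the range $\|IJA\|+2k\leq n+1$. This weight-and-order analysis, rather than the algebra of \eqref{curv}, is the delicate step, and it reflects the order-$n/2$ indeterminacy of the ambient metric in even dimensions.
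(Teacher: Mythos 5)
Your proposal is correct and follows essentially the same route as the paper's proof: expand the trace $\gt^{PQ}\Rt_{IJAQ,P\nf\cdots\nf}$ at $\rho=0$, $t=1$ using the normal-form metric components, evaluate the two $0$-component terms via the two identities in \eqref{curv} to obtain the coefficients $-k$ and $-(k+1)$, and kill the trace with the contracted second Bianchi identity together with Ricci-flatness. The even-dimensional order count you sketch is exactly what the paper handles by citing Proposition 6.4 of \cite{FG2}, which asserts that the relevant Ricci curvature derivative components vanish precisely under the strength condition $\|IJA\|+2k\leq n+1$.
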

\begin{proof}
First assume that $n$ is odd.  The second Bianchi identity and the fact
that $\gt$ is Ricci flat to infinite order imply at $\rho =0$ 
$$
\gt^{PQ}\Rt_{IJAP,Q\underbrace{\scriptstyle{\infty\cdots\infty}}_{k-1}}=0
$$
for all $k$.  Write out the trace using the normalization of $\gt$ at
$\rho=0$, $t=1$ to obtain  
$$
\Rt_{IJA0,\nf\underbrace{\scriptstyle{\infty\cdots\infty}}_{k-1}}
+\Rt_{IJA\nf,0\underbrace{\scriptstyle{\infty\cdots\infty}}_{k-1}}
+g^{pq}\Rt_{IJAp,q\underbrace{\scriptstyle{\infty\cdots\infty}}_{k-1}}=0.
$$
Now \eqref{curv} shows that 
$$
\Rt_{IJA0,\,}{}_{\underbrace{\scriptstyle{\infty\cdots\infty}}_{k}}
=-k\Rt_{IJA\infty,\,}{}_{\underbrace{\scriptstyle{\infty\cdots\infty}}_{k-1}},
\qquad\quad
\Rt_{IJA\nf,0\underbrace{\scriptstyle{\infty\cdots\infty}}_{k-1}}
=-(k+1)\Rt_{IJA\infty,\,}{}_{\underbrace{\scriptstyle{\infty\cdots\infty}}_{k-1}}
$$
so the result follows. 

If $n$ is even, the hypothesis $\|IJA\|+2k\leq n+1$ guarantees that
the relevant Ricci curvature derivative component vanishes so that the same
argument applies.  See Proposition 6.4 of \cite{FG2}.   
\end{proof}

\bigskip
\noindent
{\it Proof of Theorem~\ref{main}.}
Choose a metric $g$ in the conformal class.  Put $\gt$ into normal form 
relative to $g$.  The hypothesis $\nabla \chi=0$ is equivalent to the
statement that $\nt_A\ct|_{\rho =0}=0$ for $A=0,a$ for any extension.   
Define $\ct$ by parallel 
translation of $\chi$ along the lines $\rho\mapsto (t,x,\rho)$.
Parallel translation commutes with the dilations so $\ct$ has the right
homogeneity.  
It suffices to show that $\nt\ct$ vanishes to the stated order.  The point
is that the system $\nt\ct=0$ is overdetermined, so it must be shown that
parallel translation in the $\rho$-direction preserves vanishing of the
tangential covariant derivatives.  This requires study of the commutation 
of ambient covariant derivatives to high order.  Note that if $(M,g)$ is
real-analytic, then any parallel tractor $\chi$ is real-analytic, so if
$\gt$ is real-analytic, then $\ct$ is real-analytic as well.  

We show first that 
$\ct_{\mathcal{I},\,\underbrace{\scriptstyle{\infty\cdots\infty}}_{k}}=0$
on $\cGt$ for $k\geq 1$, where  
$\mathcal{I}=I_1\cdots I_r$ is any list of $r$ indices.  This follows by 
induction on $k$.  The case $k=1$ is clear since $\ct$ was defined to be
parallel in the $\rho$-direction.  For $k>1$ write
$$
\ct_{\mathcal{I},}{}_{\underbrace{\scriptstyle{\infty\cdots\infty}}_{k}}
=\partial_\infty\ct_{\mathcal{I},}{}_{\underbrace{\scriptstyle{\infty\cdots\infty}}_{k-1}}
-\sum_{s=1}^r\Gt_{I_s\nf}^J
\ct_{I_1\cdots I_{s-1}JI_{s+1}\cdots
  I_r,}{}_{\underbrace{\scriptstyle{\infty\cdots\infty}}_{k-1}} 
-\sum_{l=1}^{k-1}\Gt_{\nf\nf}^A
\ct_{\mathcal{I},}{}_{\underbrace{\scriptstyle{\infty\cdots\nf}}_{l-1}
  A\underbrace{\scriptstyle{\infty\cdots\nf}}_{k-l-1}}.  
$$
The first two terms vanish by the induction hypothesis and the last because
$\Gt^A_{\nf\nf}=0$ from \eqref{cnr}.

We argue next that if $\nt^l\ct|_{\rho =0}=0$ for $1\leq l\leq k-1$,  
then $\nt^{k+1}\ct|_{\rho=0}$ is symmetric in the last $k$ of the $k+1$ 
differentiation indices.  (We think of the differentiation indices as
listed after the indices of $\ct$, separated by a comma.)   This follows by  
commuting two adjacent indices 
among the last $k$ in $\nt^{k+1}\ct|_{\rho=0}$.  This commutation of
derivatives can be written
by the differentiated Ricci identity as a sum of terms of the form
derivatives of 
curvature contracted into derivatives of $\ct$, in which the total number
of derivatives of $\ct$ is at least 1 and drops by at least 2.  Thus the
result follows. 

Now we proceed with the main induction: we prove by induction on $k$ the
statement $\nt^k\ct|_{\rho =0}=0$.  
For $k=1$, $\nt_A\ct|_{\rho=0}=0$ for $A = 0,a$ since $\chi$ 
was parallel, and $\nt_\infty\ct|_{\rho =0} =0$ since it was extended to
be parallel along the $\rho$ lines.   

Assume $\nt^l\ct|_{\rho =0}=0$ for $1\leq l\leq k$.  We must show that 
$\nt^{k+1}\ct|_{\rho =0}=0$ and can assume that $k\leq n/2-2$ if $n$ is
even.  We show that 
$\ct_{\mathcal{I},A_0\cdots A_{k}}|_{\rho =0}=0$ by considering various
cases for the indices.  First, if $A_{k}\neq \infty$, then the result
follows by expanding out
the last covariant derivative in terms of Christoffel symbols and using the
induction hypothesis and the fact that $\partial_{A_{k}}$ is tangent 
to $\{\rho =0\}$.  Next, if $A_l\neq \infty$ for some $l>0$, then we can
commute $A_l$ all the way to the right using the above observation about
symmetry in the last indices, and thus reduce to the case $A_{k}\neq
\infty$.  So we can assume all $A_l=\infty$ for $l>0$.  

If $A_0=\infty$, then our first observation above does the job.  
In the following, all quantities are understood to be evaluated at
$\rho=0$, $t=1$.  If $A_0\neq \infty$, write
\begin{equation}\label{commute}
\begin{split}
\ct_{\mathcal{I},A_0\underbrace{\scriptstyle{\infty\cdots\infty}}_{k}}
=&\ct_{\mathcal{I},\infty
  A_0\underbrace{\scriptstyle{\infty\cdots\infty}}_{k-1}} 
+\Big(\sum_{s=1}^r\Rt^J{}_{I_sA_0\infty}
\ct_{I_1\cdots I_{s-1}JI_{s+1}\cdots  I_r}\Big)
{}_{,\,\underbrace{\scriptstyle{\infty\cdots\infty}}_{k-1}} \\ 
=&\sum_{s=1}^r\Rt^J{}_{I_sA_0\infty,\,}{}_{\underbrace{\scriptstyle{\infty\cdots\infty}}_{k-1}}
\ct_{I_1\cdots I_{s-1}JI_{s+1}\cdots I_r},
\end{split}
\end{equation}
where for the second equality we have used the induction hypothesis and
the fact that
$\ct_{\mathcal{I},\infty
  A_0\underbrace{\scriptstyle{\infty\cdots\infty}}_{k-1}} =0$ since a
differentiation index after the first is not $\infty$. 
Now the first equation in \eqref{curv} and the skew-symmetry of the
curvature tensor in the second pair of indices imply 
$\Rt^J{}_{I0\infty,\,}{}_{\underbrace{\scriptstyle{\infty\cdots\infty}}_{k-1}}=0$ 
for all $J$, $I$ and $k\geq 1$, so we obtain
$\ct_{\mathcal{I},0\underbrace{\scriptstyle{\infty\cdots\infty}}_{k}}=0$. 

We have left only to consider
$\ct_{\mathcal{I},a\underbrace{\scriptstyle{\infty\cdots\infty}}_{k}}$
with $1\leq a\leq n$.  We
intend to apply Lemma~\ref{divergence} to 
$\Rt^J{}_{I_sa\infty,\,}{}_{\underbrace{\scriptstyle{\infty\cdots\infty}}_{k-1}}$
on the right-hand side of \eqref{commute} for $A_0=a$ after lowering the
index $J$.  We verify the hypothesis in
Lemma~\ref{divergence} in case $n$ is even.  Denoting by $L$ the index   
replacing $J$ when it is lowered 
and recalling that $k\leq n/2-2$, we have 
$\|LI_sa\|+2k \leq 5+(n-4)= n+1$.  Thus the application of  
Lemma~\ref{divergence} is justified.

Equation \eqref{commute}, Lemma~\ref{divergence} and the induction
hypothesis thus give 
\[
\begin{split}
(2k+1)\ct_{\mathcal{I},a\underbrace{\scriptstyle{\infty\cdots\infty}}_{k}}
=&(2k+1)\sum_{s=1}^r\Rt^J{}_{I_sa\infty,\,}{}_{\underbrace{\scriptstyle{\infty\cdots\infty}}_{k-1}}  
\ct_{I_1\cdots I_{s-1}JI_{s+1}\cdots I_r}\\
=&\sum_{s=1}^r\Rt^J{}_{I_sa}{}^p{}_{,p\underbrace{\scriptstyle{\infty\cdots\infty}}_{k-1}} 
\ct_{I_1\cdots I_{s-1}JI_{s+1}\cdots I_r}\\
=&
\Big(\sum_{s=1}^r\Rt^J{}_{I_sa}{}^p\ct_{I_1\cdots I_{s-1}JI_{s+1}\cdots
  I_r}\Big){}_{,\,p\underbrace{\scriptstyle{\infty\cdots\infty}}_{k-1}}\\
=&\big(\ct_{\mathcal{I},a}{}^p
-\ct_{\mathcal{I},}{}^p{}_a\big)_{,\,p\underbrace{\scriptstyle{\infty\cdots\infty}}_{k-1}}\\
=&\ct_{\mathcal{I},a}{}^p{}_{p\underbrace{\scriptstyle{\infty\cdots\infty}}_{k-1}}
-\ct_{\mathcal{I},}{}^p{}_{ap\underbrace{\scriptstyle{\infty\cdots\infty}}_{k-1}}\\
=&\ct_{\mathcal{I},a}{}^p{}_{\underbrace{\scriptstyle{\infty\cdots\infty}}_{k-1}p}
-\ct_{\mathcal{I},}{}^p{}_{a\underbrace{\scriptstyle{\infty\cdots\infty}}_{k-1}p},
\end{split}
\]
where the last equality uses the observation about symmetry in last 
differentiated indices.  Now write out each of the final covariant
differentiations in the last line in terms of Christoffel symbols.  Since
we have shown that  
$\ct_{\mathcal{I},A_0\cdots A_{k}}=0$ unless 
$A_1=\cdots =A_{k}=\infty$ and $1\leq A_0\leq n$, consulting with
\eqref{cnr} to evaluate the Christoffel symbols gives 
$$
\ct_{\mathcal{I},a}{}^p{}_{\underbrace{\scriptstyle{\infty\cdots\infty}}_{k-1}p}
=\partial_p\ct_{\mathcal{I},a}{}^p{}_{\underbrace{\scriptstyle{\infty\cdots\infty}}_{k-1}} 
+\Gt_{p0}^p\ct_{\mathcal{I},a}{}^0{}_{\underbrace{\scriptstyle{\infty\cdots\infty}}_{k-1}} 
=n\ct_{\mathcal{I},a\underbrace{\scriptstyle{\infty\cdots\infty}}_{k}} 
$$
and 
$$
\ct_{\mathcal{I},}{}^p{}_{a\underbrace{\scriptstyle{\infty\cdots\infty}}_{k-1}p}
=\partial_p\ct_{\mathcal{I},}{}^p{}_{a\underbrace{\scriptstyle{\infty\cdots\infty}}_{k-1}} 
-\Gt_{pa}^\infty\ct_{\mathcal{I},}{}^p{}_{\underbrace{\scriptstyle{\infty\cdots\infty}}_{k}}
=\ct_{\mathcal{I},a\underbrace{\scriptstyle{\infty\cdots\infty}}_{k}}. 
$$
Thus we obtain 
$(2k+1)\ct_{\mathcal{I},a\underbrace{\scriptstyle{\infty\cdots\infty}}_{k}}
=(n-1)\ct_{\mathcal{I},a\underbrace{\scriptstyle{\infty\cdots\infty}}_{k}}$,
or
$$
(2k+2-n)\ct_{\mathcal{I},a\underbrace{\scriptstyle{\infty\cdots\infty}}_{k}}=0.   
$$
Hence the induction proceeds to all orders if $n$ is odd, but we must
impose $k\neq n/2-1$ if $n$ is even.    
\stopthm

A consequence of the proof is the following proposition.
\begin{proposition}\label{beyondcritical}
Let $n$ be even and suppose that $\gt$ is an infinite-order ambient metric
for $(M,c)$.  (In particular $(M,c)$ has vanishing obstruction tensor.)
Let $\chi\in\Gamma\left(\otimes^r\mathcal{T}^*\right)$ satisfy
$\nabla\chi =0$.  If $\chi$ has an ambient extension satisfying
$\nt\ct=O(\rho^{n/2})$, then $\chi$ has an ambient extension satisfying 
$\nt\ct=O(\rho^\nf)$.  
\end{proposition}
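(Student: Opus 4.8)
The plan is to deduce the result by continuing the induction in the proof of Theorem~\ref{main} past the critical order, exploiting two new features of the present situation. First, because $\gt$ is Ricci-flat to infinite order, Lemma~\ref{divergence} is available for all $k$, with no constraint of the form $\|IJA\|+2k\le n+1$. Second, the hypothesis that some extension is parallel to order $n/2$ supplies exactly the one piece of data at the critical order $n/2$ that the proof of Theorem~\ref{main} was unable to produce.

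I would not work with the given extension directly. Instead, following the proof of Theorem~\ref{main}, I would let $\ct$ be the extension of $\chi$ obtained by parallel translation along the lines $\rho\mapsto(t,x,\rho)$, and denote by $\ct_0$ the extension furnished by the hypothesis, so that $\nt\ct_0=O(\rho^{n/2})$. The extension $\ct$ has the correct homogeneity and satisfies $\nt_\infty\ct=0$ identically on $\cGt$. This last property is essential: it is what makes the case analysis in the proof of Theorem~\ref{main} work, since any iterated covariant derivative of $\ct$ whose innermost differentiation index is $\infty$ is a derivative of $\nt_\infty\ct=0$ and hence vanishes. To use the hypothesis I must transfer it from $\ct_0$ to $\ct$. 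For this I set $\psi=\ct_0-\ct$, so that $\psi|_{\rho=0}=0$ and $\nt_\infty\psi=\nt_\infty\ct_0-\nt_\infty\ct=O(\rho^{n/2})$. I then carry out the differentiation argument alluded to before the proof of Theorem~\ref{main}: differentiating the relation $\nt_\infty\psi=O(\rho^{n/2})$ repeatedly in $\rho$ at $\rho=0$ and using $\psi|_{\rho=0}=0$ shows inductively that $\partial_\rho^j\psi|_{\rho=0}=0$ for $j<n/2$, i.e. $\psi=O(\rho^{n/2})$. Since tangential differentiations preserve the order of vanishing in $\rho$, I conclude $\nt\ct=\nt\ct_0-\nt\psi=O(\rho^{n/2})$. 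Thus $\ct$ is a $\rho$-parallel extension with $\nt^l\ct|_{\rho=0}=0$ for all $l\le n/2$; in particular the critical component $\ct_{\mathcal{I},a\underbrace{\scriptstyle{\infty\cdots\infty}}_{n/2-1}}$ vanishes at $\rho=0$, which is precisely the datum missing at the end of the proof of Theorem~\ref{main}.

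With this $\ct$ in hand I would re-run the main induction of Theorem~\ref{main}, now taking the relations $\nt^l\ct|_{\rho=0}=0$ for $l\le n/2$ as the base and proving $\nt^{k+1}\ct|_{\rho=0}=0$ for each $k\ge n/2$. Every structural step carries over verbatim: the pure-$\infty$ covariant derivatives vanish globally because $\ct$ is $\rho$-parallel; the symmetry of $\nt^{k+1}\ct|_{\rho=0}$ in its last $k$ differentiation indices follows from the differentiated Ricci identity together with the induction hypothesis; and the case analysis again reduces everything to the single component $\ct_{\mathcal{I},a\underbrace{\scriptstyle{\infty\cdots\infty}}_{k}}$. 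The only place the new hypotheses enter is the application of Lemma~\ref{divergence} to the curvature factor $\Rt^J{}_{I_sa\infty,\,\underbrace{\scriptstyle{\infty\cdots\infty}}_{k-1}}$ in \eqref{commute}: since $\gt$ is Ricci-flat to infinite order, the lemma applies for every $k$, and the computation closing with $(2k+2-n)\ct_{\mathcal{I},a\underbrace{\scriptstyle{\infty\cdots\infty}}_{k}}=0$ remains valid. Note that the restriction $k\le n/2-2$ imposed in the proof of Theorem~\ref{main} had two sources, the hypothesis of Lemma~\ref{divergence} and the nonvanishing of $2k+2-n$, both of which now hold for $k\ge n/2$: the first because the obstruction tensor vanishes, and the second because $2k+2-n\ge 2$. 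Hence the induction closes and proceeds to all orders, yielding $\nt\ct=O(\rho^\infty)$, and this $\ct$ is the desired extension.

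The main obstacle is conceptual rather than computational: one must recognize that the sole obstruction to pushing the induction of Theorem~\ref{main} beyond order $n/2-1$ is the vanishing of the coefficient $2k+2-n$ at $k=n/2-1$, that the hypothesis provides exactly the corresponding missing tensor at the critical order, and that vanishing of the obstruction tensor restores the full force of Lemma~\ref{divergence}. The one technical point requiring care is the passage to the $\rho$-parallel extension $\ct$, because the induction's case analysis requires $\nt_\infty\ct=0$ exactly rather than merely to order $n/2$.
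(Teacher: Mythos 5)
Your proposal is correct and follows essentially the same route as the paper's own proof: compare the $\rho$-parallel extension with the hypothesized one, use successive $\rho$-differentiation of $\nt_\infty(\ct_0-\ct)=O(\rho^{n/2})$ to transfer the order-$n/2$ parallelism to the $\rho$-parallel extension, and then re-run the induction of Theorem~\ref{main} past the critical order, noting that Lemma~\ref{divergence} holds for all $k$ by infinite-order Ricci-flatness and that $2k+2-n\neq 0$ for $k\geq n/2$. The only cosmetic difference is that the paper extracts the slightly stronger conclusion $\ct_0-\ct=O(\rho^{n/2+1})$ from the same differentiation argument, whereas your $O(\rho^{n/2})$ together with the hypothesis on the $\infty$-component suffices equally well.
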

\begin{proof}
Let $\ct$ be the extension obtained by parallel translation along the lines 
$\rho\mapsto (t,x,\rho)$ as in the proof of Theorem~\ref{main} and let 
$\ct'$ be an extension satisfying $\nt\ct'=O(\rho^{n/2})$.  Then 
$\ct-\ct'=O(\rho)$ and $\nt_\nf(\ct-\ct')=O(\rho^{n/2})$.  Writing the
latter equation in terms of $\pa_\rho$ and Christoffel symbols and then
successively 
differentiating with respect to $\rho$ at $\rho=0$ shows that  
$\ct-\ct'=O(\rho^{n/2+1})$.  Hence $\nt\ct=O(\rho^{n/2})$.  
Since the induction in the proof of Theorem~\ref{main} requires only $k
\neq n/2 - 1$, it shows that
$\nt\ct=O(\rho^\nf)$.  Lemma~\ref{divergence} which is used in the proof
holds for $\gt$ for all $k$ 
because it just depends on homogeneity, straightness, and vanishing of the
Ricci curvature.     
\end{proof}

We remark that Theorem~\ref{main} implies a large collection of
integrability conditions on a parallel  
$\chi\in\Gamma\left(\otimes^r\mathcal{T}^*\right)$.  
First suppose $n$ is odd. Let $\ct$ be an
ambient extension of $\chi$ as in Theorem~\ref{main}.  Commuting covariant
derivatives shows that $\Rt.\ct=O(\rho^\nf)$, where $\Rt.$ denotes the
action of  
$\Rt$ viewed as an element of $\Gamma(\Lambda^2T^*\cGt\otimes\End T\cGt)$, 
so that $\Rt.\ct \in \Gamma(\Lambda^2T^*\cGt\otimes\otimes^rT^*\cGt)$.
Further 
differentiating this equation and restricting to $\rho =0$ shows that 
$\big((\nt^k\Rt)|_\cG\big).\chi=0$, where now $(\nt^k\Rt)|_\cG\in \Gamma  
\big((\otimes^kT^*\cGt\otimes
\Lambda^2T^*\cGt\otimes\End T\cGt)\big|_{\cG}\big)$ acts on $\chi$ via
the $\End T\cGt$ factor.  We can     
regard $(\nt^k\Rt)|_{\cG}$ as a section of a weighted tensor power of a
tractor bundle (see Proposition 6.5 of \cite{FG2}), so 
can regard $\big((\nt^k\Rt)|_\cG\big).\chi=0$ as a purely tractor equation
which holds as a 
consequence of the fact that $\chi$ is parallel.  The special case when
$k=0$ and the free $\Lambda^2T^*\cGt$ indices on $\Rt$ are tangent 
to $\cG$ recovers the fact that the tractor curvature annihilates $\chi$.   
When $k=0$ and these free indices are $j\nf$, one obtains relations
involving contractions of the Cotton and Bach tensors into $\chi$.
Relations of these types for special cases may be found in the literature,  
for example for $r=1$ in \cite{Go1} and for $\chi$ skew-symmetric in
\cite{Leit1}, \cite{Leit2}.  In 
general, Theorem~\ref{main} is equivalent to proving the integrability
conditions $\big((\nt^k\Rt)|_\cG\big).\chi=0$ for all $k$.  A similar   
discussion holds for $n$ even, keeping in consideration that the number of 
differentations transverse to $\cG$ is restricted.

\section{Critical order for $n$ even}\label{neven}

In this section we consider some examples illustrating what can happen
concerning ambient extension of parallel tractors at the critical order  
in even dimensions.  The main issue is  
that in general an ambient metric is no longer determined to higher order
solely by the conformal structure; there is an ambiguity at order $n/2$.
So whether or not a given parallel 
tractor has a parallel ambient extension can depend on which ambient metric
is chosen.  The natural general framework 
for such investigations would be to consider ambient metrics with log terms
as in 
Theorem 3.10 of \cite{FG2}.  In this case parallel extensions of tractors
must be expected to have expansions with log terms as well.  For simplicity
we generally restrict consideration here to the case of conformal
structures with vanishing obstruction tensor, for which log terms do not
enter.  The one exception is that 
our discussion of Fefferman metrics of nondegenerate hypersurfaces in
$\C^n$ applies also to some ambient metrics with log terms.

The dependence of the covariant derivatives of the curvature tensor of
an ambient metric on the  
ambiguity plays an important role in these considerations.  Let $n\geq 4$
be even, let 
$(M,c)$ be a conformal structure with vanishing obstruction tensor, and let  
$\gt$ be an infinite-order ambient metric for $(M,c)$ in normal form
relative to a representative $g\in c$.  Recall from \S\ref{ambtrac}
that $\gt$ takes the form \eqref{normalform} and 
$\tf\left(\pa_\rho^{n/2}g_\rho|_{\rho=0}\right)$ is the ambiguity.  
Define the strength of lists of
indices $\|IJ\cdots K\|$ as in Lemma~\ref{divergence}.  Proposition 6.2 of
\cite{FG2} shows that if $\|IJKLM_1\cdots M_r\|\leq n+1$, then the
restriction to $\rho=0$, $t=1$ of $\Rt_{IJKL,M_1\cdots M_r}$ is independent
of the ambiguity and defines a natural tensor invariant of $g$ as the
indices between $1$ and $n$ vary; in fact it  
can be expressed as a linear combination of contractions 
$\text{contr}(\nabla^{m_1}R\otimes \cdots \otimes\nabla^{m_N}R)$ of the 
covariant derivatives of the curvature tensor of $g$ such that  
$2N+2+\sum_{i=1}^Nm_i=\|IJKLM_1\cdots M_r\|$. 
On the other hand, if 
$\|IJKLM_1\cdots M_r\|\geq n+2$, then $\Rt_{IJKL,M_1\cdots
  M_r}|_{\rho=0,\,t=1}$ may depend 
on the ambiguity.  Proposition 6.6 of \cite{FG2} shows that the component    
$\Rt_{\infty ij\infty,\,\underbrace{\scriptstyle{\infty\cdots\infty}}_{(n-4)/2}}$ 
parametrizes the ambiguity:  any two infinite-order ambient metrics in
normal form for
which this component agree must agree to infinite order, and this component
can 
be prescribed to be an arbitrary trace-free symmetric 2-tensor 
on $M$ subject to the condition that its divergence is a particular natural 
1-form invariant of the initial metric $g$.  Another treatment of similar
considerations concerning the ambiguity is contained in \cite{CG1}.   

Let now $\chi\in \Gamma(\otimes^r\cT^*)$ be parallel for $(M,c)$.
We are interested in the questions of existence and uniqueness of
infinite-order ambient 
metrics $\gt$ for which $\chi$ has an ambient extension  
satisfying $\nt\ct=O(\rho^\nf)$.  Proposition~\ref{beyondcritical} shows
that for a given $\gt$, $\chi$ has such an extension if and only if it has
an extension satisfying $\nt\ct=O(\rho^{n/2})$.  We will see that 
for some $\chi$ there is no such extension for any choice of $\gt$,
for some $\chi$ there is such an extension for precisely one $\gt$, 
and for some $\chi$ such extensions exist for infinitely many choices of
$\gt$.    

A trivial example of a parallel tractor in even dimensions which has a
parallel 
ambient extension for more than one infinite-order ambient metric is the
tractor metric.  Clearly for any ambient metric, the ambient metric  
itself provides a parallel extension.  If one index is raised, the 
identity endomorphism of the tractor bundle has as a parallel extension the
identity endomorphism of the ambient tangent bundle, and in this
realization the parallel extension is actually independent of which 
ambient metric is chosen.  

We begin by formulating a condition on $\chi$ under which there is at most
one choice of infinite-order ambient metric with respect to which $\chi$
has an ambient extension satisfying $\nt\ct=O(\rho^{n/2})$.   
Recall that a 1-form on $M$ can be inserted as
the injecting part of an adjoint tractor.  (An adjoint tractor is a section 
of $\End\cT$ which is skew with respect to $h$.)  Namely, if $\eta\in
T_x^*M$ then $\pi^*\eta$ defines a section of  
$T^*\mathcal{G}|_{\cG_x}$ which annihilates $T$.  
Since $T_I$ spans the annihilator of $T\cG\subset T\cGt$, we may regard   
$\pi^*\eta$ as an equivalence class of sections of 
$T^*\mathcal{\widetilde{G}}|_{\cG_x}$ defined modulo $T_I$ and homogeneous
of degree 0 with respect to the $\delta_s$, and 
therefore as an equivalence class of elements of
$\cT^*_x(-1)$   
defined modulo $T_I$.  So we can define a bundle map $\cI:T^*M\rightarrow   
\End_{\text{skew}}\cT $ by
$\cI(\eta)^J{}_{I}=2h^{JK}T_{[K}(\pi^*\eta)_{I]}$, where $h^{JK}$ denotes
the inverse tractor metric.  With respect to the splitting induced by any 
representative $g$ one has $\cI(\eta)^0{}_i=\eta_i$,
$\cI(\eta)^j{}_\infty=-\eta^j$, other $\cI(\eta)^J{}_I=0$.  
Now let $\chi\in\Gamma\left(\otimes^r\mathcal{T}^*\right)$.
Define a bundle map 
$F_{\chi}:T^*M\rightarrow \otimes^r\cT^*$ by 
$F_\chi(\eta)= \cI(\eta). \chi$, where the . refers to the
action of $\End\cT $ on $\otimes^r\cT^*$.  

\begin{definition}
Let $\chi\in\Gamma\left(\otimes^r\mathcal{T}^*\right)$.
We say that $\chi$ is {\it determining} if the induced
map $F_\chi:\Gamma(T^*M)\rightarrow \Gamma(\otimes^r\cT^*)$ is injective.
\end{definition}

Note that we are requiring that the induced map on smooth sections be
injective, not that $F_\chi$ is injective as a bundle map.
Observe that the tractor metric $h$ is not determining since $K.h=0$    
for any adjoint tractor $K$.  

\begin{proposition}\label{determining}
Let $(M,c)$ be a conformal manifold of even dimension $n\geq 4$ with
vanishing obstruction tensor.  Let  
$\chi\in\Gamma\left(\otimes^r\mathcal{T}^*\right)$ 
be parallel and determining.  Then up to infinite order and up to
diffeomorphism, 
there is at most one infinite-order ambient metric with respect to which
$\chi$ has an ambient extension satisfying  $\nt\ct = O(\rho^{n/2})$.   
\end{proposition}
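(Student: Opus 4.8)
The plan is to compare two candidate ambient metrics and show that the requirement of parallel extendability pins down their order-$n/2$ ambiguity, so they must coincide to infinite order. First I would normalize: given two infinite-order ambient metrics for which $\chi$ has extensions parallel to order $n/2$, put each into normal form relative to a fixed $g\in c$ by the unique homogeneous diffeomorphism fixing $\cG$, carrying the parallel extensions along. Thus I may assume $\gt$ and $\gt'$ are both in normal form relative to $g$, with ambiguities $\ka=\tf(\pa_\rho^{n/2}g_\rho|_{\rho=0})$ and $\ka'$. Because the standard tractor bundle, metric, connection, and the parallel section $\chi$ are conformally invariant and realized identically at $\rho=0$, the splitting components $(\chi_0,\chi_i,\chi_\nf)$ relative to $g$ are the same for both realizations. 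The goal is then to prove $\ka=\ka'$, for Proposition 6.6 of \cite{FG2} would then give $\gt=\gt'$ to infinite order, which is the assertion.

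The heart of the argument is the critical-order integrability condition coming from the proof of Theorem~\ref{main} at $k=n/2-1$. Running that proof through this order is legitimate: the reductions there (vanishing of components with a non-$\infty$ index after the first, and symmetry in the trailing differentiation indices) use only $\nt^l\ct|_{\rho=0}=0$ for $l\le n/2-1$, which holds by Theorem~\ref{main}. Hence \eqref{commute} applies at $k=n/2-1$ and gives, at $\rho=0$, $t=1$,
$$
\ct_{\mathcal{I},a\underbrace{\scriptstyle{\infty\cdots\infty}}_{n/2-1}}
=\sum_{s=1}^r\Rt^J{}_{I_sa\infty,\,}{}_{\underbrace{\scriptstyle{\infty\cdots\infty}}_{n/2-2}}\chi_{I_1\cdots I_{s-1}JI_{s+1}\cdots I_r},
$$
where now Lemma~\ref{divergence} is \emph{not} available, precisely because $\|LI_sa\|+2k$ exceeds $n+1$. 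The left side is a component of $\nt^{n/2}\ct|_{\rho=0}$, so the hypothesis $\nt\ct=O(\rho^{n/2})$ forces it to vanish (as in the proof of Proposition~\ref{beyondcritical}). Writing this for both $\gt$ and $\gt'$ and subtracting, the common factors $\chi_{I_1\cdots I_{s-1}JI_{s+1}\cdots I_r}$ remain and I am left with
$$
\sum_{s=1}^r(\Rt-\Rt')^J{}_{I_sa\infty,\,}{}_{\underbrace{\scriptstyle{\infty\cdots\infty}}_{n/2-2}}\chi_{I_1\cdots I_{s-1}JI_{s+1}\cdots I_r}=0.
$$

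Next I would identify this difference with $F_\chi$. By Proposition 6.2 of \cite{FG2}, all curvature components of strength $\le n+1$ at $\rho=0$, $t=1$ are universal invariants of $g$ and hence agree for $\gt$ and $\gt'$; only terms of strength $\ge n+2$ survive the subtraction. A strength count, together with skew-symmetry to discard $\Rt_{\infty\infty a\infty}$, shows these are exactly the components $\Rt_{\infty i a\infty,\,\infty\cdots\infty}$ (with $n/2-2$ trailing derivatives), which by Proposition 6.6 of \cite{FG2} parametrize the ambiguity and satisfy $(\Rt-\Rt')_{\infty i a\infty,\,\infty\cdots\infty}=c_n(\ka-\ka')_{ia}$ for a nonzero dimensional constant $c_n$. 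Raising $J$ with the tractor metric ($h^{0\infty}=1$, $h^{ij}=g^{ij}$) and comparing with the explicit form $\cI(\eta)^0{}_i=\eta_i$, $\cI(\eta)^j{}_\infty=-\eta^j$ of the injector, the displayed difference equals, up to the nonzero constant $c_n$, the quantity $F_\chi(\omega^{(a)})$, where $\omega^{(a)}$ is the $1$-form $i\mapsto(\ka-\ka')_{ia}$. Thus $F_\chi(\omega^{(a)})=0$ for each $a$. Since $\chi$ is determining, $F_\chi$ is injective on sections, so $\omega^{(a)}=0$ for every $a$, i.e. $(\ka-\ka')_{ia}=0$ for all $i,a$, whence $\ka=\ka'$ and the two metrics agree to infinite order.

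I expect the main obstacle to be the bookkeeping in the third paragraph: verifying that the only ambiguity-dependent curvature components entering the subtracted condition are the ones proportional to $(\ka-\ka')_{ia}$, and pinning down the precise contractions and the nonzero constant $c_n$ so that the result matches $F_\chi$ exactly. The conceptual point—that the obstruction to extending $\chi$ at the critical order is linear in the ambiguity and is exactly $F_\chi$ applied to its columns—is what makes the determining hypothesis the right condition.
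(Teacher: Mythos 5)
Your proposal is correct and is essentially the paper's own argument: both rest on the critical-order ($k=n/2-1$) instance of \eqref{commute} from the proof of Theorem~\ref{main}, the fact (Propositions 6.2 and 6.6 of \cite{FG2}) that among the curvature components entering it only $\Rt_{\infty ia\infty,\,\infty\cdots\infty}$ (with $(n-4)/2$ trailing derivatives) depends on the ambiguity and parametrizes it, the identification of that ambiguity-dependence with $\cI(\eta)$ acting on $\chi$, and the injectivity of $F_\chi$ supplied by the determining hypothesis. The only difference is presentational: the paper contracts with an arbitrary vector field $v^a$ and concludes that there is at most one possibility for the $1$-form $\eta_i=v^a\Rt_{\infty ia\infty,\,\infty\cdots\infty}$, whereas you subtract the equations for two normalized metrics and freeze the index $a$; this is the same computation, and your extra step expressing the curvature difference as $c_n(\ka-\ka')_{ia}$ is not even needed, since Proposition 6.6 of \cite{FG2} already states that agreement of this component forces agreement of the metrics to infinite order.
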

\begin{proof}
Pick $g\in c$ and assume that $\gt$ is in normal form with respect to $g$.
If $\ct$ is an extension satisfying $\nt\ct=O(\rho^{n/2})$, then at $\rho
=0$, $t=1$ we have (notation as in the proof of Theorem~\ref{main}):
\begin{equation}\label{determine}
\begin{split}
0=&\ct_{\mathcal{I},a\underbrace{\scriptstyle{\infty\cdots\infty}}_{(n-2)/2}}
-\ct_{\mathcal{I},\infty
  a\underbrace{\scriptstyle{\infty\cdots\infty}}_{(n-4)/2}}\\
=&\Big(\sum_{s=1}^r\Rt^J{}_{I_sa\infty}
\ct_{I_1\cdots I_{s-1}JI_{s+1}\cdots  I_r}\Big)
{}_{,\,\underbrace{\scriptstyle{\infty\cdots\infty}}_{(n-4)/2}} \\ 
=&\sum_{s=1}^r\Rt^J{}_{I_sa\infty,\,\underbrace{\scriptstyle{\infty\cdots\infty}}_{(n-4)/2}} 
\ct_{I_1\cdots I_{s-1}JI_{s+1}\cdots I_r}.
\end{split}
\end{equation}
Of the components
$\Rt^J{}_{Ia\infty,\,\underbrace{\scriptstyle{\infty\cdots\infty}}_{(n-4)/2}}$
which enter this equation, only 
$\Rt^0{}_{ia\infty,\,\underbrace{\scriptstyle{\infty\cdots\infty}}_{(n-4)/2}}$
and 
$\Rt^j{}_{\infty
  a\infty,\,\underbrace{\scriptstyle{\infty\cdots\infty}}_{(n-4)/2}}$ 
depend on the ambiguity of the ambient metric.  All other components are
determined by $g$.  Choose a vector 
field $v$ on $M$.  Define a 1-form $\eta$ on $M$ by 
$\eta_i=v^a\Rt_{\infty 
  ia\infty,\,\underbrace{\scriptstyle{\infty\cdots\infty}}_{(n-4)/2}}$.  
Then 
$\cI(\eta)^J{}_I
=v^a\Rt^J{}_{Ia\infty,\,\underbrace{\scriptstyle{\infty\cdots\infty}}_{(n-4)/2}}$  
if ${}^J{}_I={}^0{}_i$ or ${}^J{}_I={}^j{}_\infty$ and 
$\cI(\eta)^J{}_I=0$ otherwise.  If we set 
$D^J{}_{I}=v^a\Rt^J{}_{Ia\infty,\,\underbrace{\scriptstyle{\infty\cdots\infty}}_{(n-4)/2}} 
-\cI(\eta)^J{}_{I}$, then $D^J{}_{I}$ is independent of the ambiguity in the
ambient metric.  Now the contraction of $v^a$ with \eqref{determine} can be 
written 
$F_\chi(\eta)= -D.\chi$.  By the injectivity of $F_\chi$ on
$\Gamma(T^*M)$, it follows that for each $v$ there is at most one
possibility for $\eta$.  Therefore there is at most one possibility for the
tensor 
$\Rt_{\infty
  ia\infty,\,\underbrace{\scriptstyle{\infty\cdots\infty}}_{(n-4)/2}}$. 
But this tensor determines the ambiguity in the ambient metric. 
\end{proof}

Let us consider the case $r=1$ so that $\chi$ is a parallel section of
$\cT^*$, which we assume is nonzero.  We sometimes call such $\chi$ a
parallel tractor 1-form.  In the terminology of Gover \cite{Go3},  
$\chi$ defines an almost Einstein structure.  There is a large 
literature concerning parallel tractor 1-forms, especially because of their
relation to 
conformally Einstein metrics and their basic role in conformal holonomy. 

Upon writing the equation $\nabla\chi=0$ in the splitting   
corresponding to a metric $g$ using \eqref{tracchrist}, one finds that
$\chi$ is determined by $\chi_0$, and that $\chi_0$ must satisfy the
equation $\tf\big((\nabla^2_{ij}+P_{ij})\chi_0\big)=0$.  Specifically,
writing $\sigma = \chi_0$, $\chi$ is given by  
$$
\chi=
\begin{pmatrix}
\chi_0\\
\chi_i\\
\chi_\infty
\end{pmatrix}
=
\begin{pmatrix}
\sigma\\
\sigma_i\\
-\tfrac{1}{n}(\Delta \sigma + J\sigma)  
\end{pmatrix}
$$
where $\Delta=\nabla^k\nabla_k$ and $J=R/2(n-1)$. 
In this way parallel sections of $\cT^*$ are in one-to-one-correspondence 
with solutions to the conformally invariant equation
\begin{equation}\label{tfP}
\tf\big((\nabla^2_{ij}+P_{ij})\sigma\big)=0.
\end{equation}  
Since $\chi$ is parallel
and nonzero it is nonvanishing, so the 2-jet of $\sigma$ is nonvanishing. 
In particular, 
$\{\sigma\neq 0\}$ is open and dense.  The conformal 
transformation law for the trace-free Schouten tensor shows that on this
set the rescaled metric $\sigma^{-2}g$ is Einstein.  If
$\Sigma:=\{\sigma=0\}$ is nonempty, the conformal 
structure extends smoothly across $\Sigma$ but the Einstein 
representative $\sigma^{-2}g$ becomes singular.  Since the obstruction
tensor vanishes for conformal classes containing an Einstein metric, by
continuity it vanishes for even-dimensional conformal classes admitting a
nonzero parallel tractor 1-form.  

\begin{proposition}\label{rank1}
Let $\chi\in\Gamma(\cT^*)$ be nonzero and satisfy $\nabla\chi =0$.  
Then $\chi$ is determining.
\end{proposition}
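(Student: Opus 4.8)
The plan is to show that $F_\chi:\Gamma(T^*M)\to\Gamma(\cT^*)$ is injective by writing out $F_\chi(\eta)=\cI(\eta).\chi$ explicitly in the splitting associated to a representative $g\in c$, and then showing that $F_\chi(\eta)=0$ forces $\eta=0$. The key inputs are the explicit formula for $\chi=(\sigma,\sigma_i,-\tfrac1n(\Delta\sigma+J\sigma))$ in terms of the underlying solution $\sigma$ of \eqref{tfP}, the explicit action of $\cI(\eta)$ (with $\cI(\eta)^0{}_i=\eta_i$, $\cI(\eta)^j{}_\infty=-\eta^j$, and all other components zero), and the fact that $\{\sigma\neq 0\}$ is open and dense because the $2$-jet of $\sigma$ is nonvanishing.

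First I would recall that $\End\cT$ acts on $\cT^*$ as a derivation dual to the standard action on $\cT$; concretely, for $K\in\End\cT$ one has $(K.\chi)_I=-K^J{}_I\chi_J$ (up to the sign convention fixed earlier). Substituting $K=\cI(\eta)$ and using that the only nonzero components of $\cI(\eta)$ are ${}^0{}_i=\eta_i$ and ${}^j{}_\infty=-\eta^j$, I would compute the three slots of $F_\chi(\eta)=\cI(\eta).\chi$ in terms of $\eta_i$ and the components $\chi_0=\sigma$, $\chi_j=\sigma_j$, $\chi_\infty$. The point is that the bottom component of $F_\chi(\eta)$ will be proportional to $\sigma\,\eta_i$ (coming from the ${}^0{}_i$ entry paired against $\chi_0=\sigma$), while another component will involve $\eta^j$ paired against $\chi_\infty$. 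This reduces the vanishing of $F_\chi(\eta)$ to a pointwise linear-algebraic condition relating $\eta$ and the jet of $\sigma$.

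The main step, and the expected obstacle, is to conclude that $\eta\equiv 0$ from $F_\chi(\eta)\equiv 0$ as an identity of sections. On the open dense set $\{\sigma\neq 0\}$, the slot proportional to $\sigma\,\eta_i$ immediately gives $\eta_i=0$ there, since $\sigma$ is nonvanishing on that set. Thus $\eta$ vanishes on an open dense set, and by continuity $\eta\equiv 0$ on all of $M$. This is precisely where the hypothesis enters that $F_\chi$ need only be injective on \emph{smooth sections} rather than as a bundle map: at points of the zero locus $\Sigma=\{\sigma=0\}$ the bundle map $F_\chi$ may well have nontrivial kernel, but a smooth $\eta$ lying in the kernel must vanish on the dense complement and hence everywhere.

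Finally I would verify that the nonvanishing of the $2$-jet of $\sigma$, already noted in the excerpt, guarantees $\{\sigma\neq 0\}$ is dense: where $\sigma$ vanishes to infinite order $\chi$ would vanish, contradicting that $\chi$ is parallel and nonzero, hence nonvanishing. The only care needed is to track the overall constants and signs in the derivation action and in the formula for $\chi_\infty$ so that the coefficient multiplying $\eta_i$ in the relevant slot is genuinely a nonzero multiple of $\sigma$ (and not something that could accidentally vanish alongside $\sigma$ on a larger set). Once that coefficient is confirmed to be a nonzero scalar times $\sigma$, the density argument closes the proof.
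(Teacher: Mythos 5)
Your proposal is correct and is essentially the paper's own proof: the paper computes $\big(F_\chi(\eta)\big)_i=-\cI(\eta)^J{}_i\chi_J=-\eta_i\chi_0=-\eta_i\sigma$ and concludes $\eta=0$ from the density of $\{\sigma\neq 0\}$, exactly as you do (the slot you want is the middle, index-$i$ component, not the bottom one, but the pairing you describe is the right one). The coefficient is precisely $-\sigma$, so your concern about constants is harmless, and your density-plus-continuity argument matches the paper's use of the nonvanishing of $\chi$.
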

\begin{proof}
We have $\big(F_\chi(\eta)\big)_I=-\cI(\eta)^J{}_I\chi_J$.  In particular,
$\big(F_\chi(\eta)\big)_i=-\eta_i\chi_0$.  
Since $\chi_0\neq 0$ on an open dense set, it follows that $F_{\chi}$ is 
injective on $\Gamma(T^*M)$.  
\end{proof}

Theorem~\ref{main} shows that if $n$ is odd, then any parallel tractor
1-form has an ambient extension parallel to infinite order (with respect to 
the unique ambient metric up to infinite order and diffeomorphism), and the
same is true to order $n/2-1$ if 
$n$ is even.  Propositions~\ref{determining} and
\ref{rank1} imply that for $n$ even, there is at most one determination 
of the 
ambiguity in an infinite-order ambient metric with respect to which $\chi$
has an 
ambient extension parallel to infinite order.  We analyze the question in
this case of whether there exists a choice of the ambiguity with respect to
which $\chi$ has an ambient extension parallel to infinite order.  

Consider first the situation near the set $\{\sigma\neq 0\}$; equivalently 
consider a conformal class containing a chosen Einstein metric.  
In this case in all dimensions one can identify explicitly an ambient  
metric for which there is an explicit parallel extension of $\chi$.  If $g$
is Einstein and we set   
$\lambda = J/n=R/2n(n-1)$, then $\gt$ defined by \eqref{normalform} with  
$g_\rho = (1+\lambda \rho)^2g$ satisfies $\Ric(\gt)=0$.  If $n$ is odd then
this is to infinite order the unique ambient metric in normal form relative
to $g$.  However, if $n$ is even then there are other infinite-order
ambient metrics corresponding to
other choices of the ambiguity.  Proposition 7.5 of \cite{FG2} shows that
for $n$ even this choice is canonical in the sense that
up to infinite order and up to diffeomorphism, it is independent of which
Einstein metric in the conformal class is chosen (usually there is only one
up to homothety, but there may be others).  Henceforth, if $g$ is     
Einstein we will denote by $\gt^c$ the ambient metric given
by \eqref{normalform} with 
\begin{equation}\label{ambcan}
g_\rho = (1+\lambda \rho)^2g.
\end{equation}

If $g$ is Einstein, the corresponding solution of \eqref{tfP} is
$\sigma=1$.  Thus the parallel tractor is given in the associated Einstein 
scale by $\chi=(1,0,-\lambda)$.  
It is straightforward to verify using \eqref{cnr} with  
$g_\rho = (1+\lambda \rho)^2g$ that   
a parallel extension of $\chi$ for the ambient metric $\gt^c$ is given by   
$\ct=(1-\lambda \rho)dt-t\lambda\,d\rho=d[t(1-\lambda\rho)]$.  
We conclude
\begin{proposition}\label{canonicalparallel}
Let $n\geq 4$ be even and suppose that $(M,c)$ is a conformal class
containing an 
Einstein metric $g$.  Up to infinite order and up to diffeomorphism, the 
canonical ambient metric $\gt^c$ is the unique infinite-order ambient
metric for $(M,c)$ with respect to which the 
associated parallel tractor 1-form $\chi$ has an ambient extension
satisfying $\nt\ct=O(\rho^{n/2})$.
\end{proposition}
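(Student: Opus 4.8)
The plan is to combine the general uniqueness result of Proposition~\ref{determining} with the explicit parallel extension already exhibited for $\gt^c$. First I would invoke Proposition~\ref{rank1}, which tells us that the nonzero parallel tractor $1$-form $\chi$ is determining, so that Proposition~\ref{determining} applies directly: there is \emph{at most one} infinite-order ambient metric (up to infinite order and diffeomorphism) for which $\chi$ admits an ambient extension $\ct$ satisfying $\nt\ct=O(\rho^{n/2})$. This immediately reduces the proposition to an existence statement for a single, explicitly named candidate.

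Next I would verify that the canonical ambient metric $\gt^c$, given by \eqref{normalform} with $g_\rho=(1+\lambda\rho)^2 g$ as in \eqref{ambcan}, actually realizes such an extension. This is precisely the content of the computation carried out just before the statement: in the Einstein scale $\chi=(1,0,-\lambda)$, and the $1$-form $\ct=(1-\lambda\rho)\,dt - t\lambda\,d\rho = d[t(1-\lambda\rho)]$ is a genuine parallel extension, i.e. $\nt\ct=0$ on all of $\cGt$, which is far stronger than $\nt\ct=O(\rho^{n/2})$. One should check that $\ct$ restricts to $\chi$ on $\cG$ (evaluate at $\rho=0$, $t=1$) and has the correct homogeneity $\delta_s^*\ct=s\,\ct$, both of which are transparent from the formula $\ct=d[t(1-\lambda\rho)]$; parallelism $\nt\ct=0$ is the exterior derivative of a function being closed together with a direct verification against the Christoffel symbols \eqref{cnr} specialized to $g_\rho=(1+\lambda\rho)^2 g$.

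The remaining point is a matter of bookkeeping about what ``canonical'' buys us. Since $\gt^c$ does not depend on the choice of Einstein representative within $c$ (up to infinite order and diffeomorphism) by Proposition~7.5 of \cite{FG2}, and since $\chi$ is determined by the conformal data alone, the candidate ambient metric is intrinsically attached to $(M,c)$ rather than to an auxiliary choice. Thus $\gt^c$ is an infinite-order ambient metric with respect to which $\chi$ has an ambient extension parallel to infinite order, and by the uniqueness from Proposition~\ref{determining} it is the only such infinite-order ambient metric up to the stated equivalences.

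I expect the main (though modest) obstacle to be confirming that the hypotheses of Proposition~\ref{determining} are genuinely met here, namely that the conformal class has vanishing obstruction tensor so that infinite-order ambient metrics exist at all. This is supplied by the discussion preceding Proposition~\ref{rank1}: a conformal class admitting a nonzero parallel tractor $1$-form contains an Einstein metric on the dense set $\{\sigma\neq 0\}$, and by continuity the obstruction tensor vanishes identically. With that in hand the argument is essentially the concatenation of three already-established facts, and no new computation beyond the verification of $\nt\ct=0$ for the explicit $\ct$ is required.
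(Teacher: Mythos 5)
Your proposal is correct and follows essentially the same route as the paper: uniqueness comes from Proposition~\ref{rank1} (the nonzero parallel tractor $1$-form is determining) combined with Proposition~\ref{determining}, and existence comes from the explicit parallel extension $\ct=d[t(1-\lambda\rho)]$ for $\gt^c$ verified against \eqref{cnr} with $g_\rho=(1+\lambda\rho)^2g$. Your care in checking the vanishing-obstruction hypothesis of Proposition~\ref{determining} is sound, though here it follows even more directly since the conformal class contains an Einstein metric by assumption.
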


We remark that \cite{Leit1} for $\lambda\neq 0$ and \cite{Leis} for
$\lambda=0$ have shown that $\gt^c$ can be written in a way that makes it
evident that any parallel tractor has a parallel ambient extension (in fact
even that the holonomy of the tractor connection equals the holonomy of
$\gt^c$). Namely, if $\lambda\neq 0$, set $u=(1+\la\rho)t$,   
$v=(1-\la\rho)t$.  Then 
$$
\gt^c=2d(\rho t)dt +t^2(1+\la\rho)^2g = \frac{1}{2\la}(du^2-dv^2)+u^2g
$$
is translation-invariant in $v$ relative to the $(u,v,x)$ coordinates and 
$\pa_v$ is parallel.  Thus a parallel extension of any parallel 
tractor is obtained by extending it to be translation-invariant in $v$.
When $\la=0$, the same reasoning applies upon setting $v=\rho t$ so that  
$\gt^c=2dvdt+t^2g$ is translation-invariant in $v$ relative to $(t,v,x)$
coordinates and $\partial_v$ is parallel.  When combined with
Proposition~\ref{determining}, this 
gives another proof that for $n$ even, the canonical ambient metrics
associated to two different Einstein metrics in the same conformal class
are related by a diffeomorphism to infinite order (Proposition 7.5 of
\cite{FG2}).  Namely, by the Leitner/Leistner result, the parallel tractor
associated to the second 
Einstein metric has a parallel ambient extension relative to the canonical
ambient metric for the first Einstein metric, so by
Proposition~\ref{determining}, the two canonical
ambient metrics are diffeomorphic to infinite order.  

A basic class of almost Einstein structures for which $\Sigma$   
is nonempty consists of those determined by Poincar\'e-Einstein metrics.
We will give necessary  
and sufficient conditions on an even-dimensional Poincar\'e-Einstein metric
in order that the associated parallel tractor 1-form admit   
an ambient extension satisfying $\nt\ct=O(\rho^{n/2})$ for some choice of
infinite-order ambient metric.  In particular,
this will give examples of parallel tractor 1-forms which do not admit such
an ambient extension for any infinite-order ambient metric.    

Let $\Sigma\subset M$ be an embedded hypersurface.
In this paper we will say that a metric $g_+$ on
$M\setminus \Sigma$ is 
Poincar\'e-Einstein if $\Ric(g_+)=-(n-1)g_+$ and if
$g_+=r^{-2}g$ for some 
defining function $r$ for $\Sigma$ and smooth metric $g$ on $M$ of
signature $(p,q)$ such that $g|_{T\Sigma}$ has signature $(p-1,q)$.  (In
particular, since $g$ is assumed smooth, the obstruction tensor of 
$g|_{T\Sigma}$ vanishes if $n\geq 5$ is odd.)  We could alternately take
$g_+$ to satisfy $\Ric(g_+)=(n-1)g_+$ and require $g|_{T\Sigma}$ to have
signature $(p,q-1)$; the two formulations are equivalent under the change 
$g_+\mapsto -g_+$, $(p,q)\mapsto (q,p)$.  

The conformal structure $(M,c)$ with $c=[g]$ is   
determined by $g_+$ and admits a parallel tractor 1-form given relative to
$g$ by    
\begin{equation}\label{tractor1form}
\chi=
\begin{pmatrix}
\chi_0\\
\chi_i\\
\chi_\infty
\end{pmatrix}
=
\begin{pmatrix}
r\\
r_i\\
-\tfrac{1}{n}(\Delta r + Jr)
\end{pmatrix}.
\end{equation}
Possibly rescaling $r$ and $g$, one can identify a neighborhood of $\Sigma$ 
in $M$ with a neighborhood of $\Sigma\times\{0\}$ in $\Sigma\times \R$ 
so that $r$ is the variable in $\R$ and  
$g$ takes the form $g=dr^2+h_r$ for a smooth 1-parameter family $h_r$ 
of metrics on $\Sigma$.  If $n$ is odd, then the Taylor expansion of $h_r$
in $r$ is even to infinite order.  If $n\geq 4$ is even, then $h_r$ is even 
in $r$ to order $n-2$, but a typical Poincar\'e-Einstein metric has    
$\partial_r^{n-1}h_r|_\Sigma\neq 0$.  (See \cite{FG2}.) 
For $n$ even, we say that $g_+$ is 
even if $\partial_r^{n-1}h_r|_\Sigma=0$.  This is independent of the
choices, and the Taylor expansion of $h_r$ is then even to infinite order.
If $n$ is odd, all Poincar\'e-Einstein metrics will be said to be 
even.  
\begin{proposition}\label{evenpe}
Let $n\geq 4$ be even and let $g_+$ be a Poincar\'e-Einstein metric with
associated parallel tractor $\chi$.  There exists an infinite-order ambient
metric for $(M,c)$ with respect to 
which $\chi$ has an ambient extension satisfying $\nt\ct=O(\rho^{n/2})$ if
and only if $g_+$ is even.  
\end{proposition}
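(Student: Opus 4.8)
The plan is to reduce the statement to an explicit computation of the obstruction to parallel extension at the critical order $k = n/2-1$, and then to identify that obstruction with the quantity $\partial_r^{n-1}h_r|_\Sigma$ whose vanishing defines evenness. By Proposition~\ref{beyondcritical}, it suffices to produce (or obstruct) an extension $\ct$ with $\nt\ct = O(\rho^{n/2})$. Theorem~\ref{main} already supplies an extension $\ct$ with $\nt\ct = O(\rho^{n/2-1})$; the question is whether the single remaining critical component can be killed by a suitable choice of ambiguity. Following the bookkeeping in the proof of Theorem~\ref{main}, the only obstruction to pushing past order $n/2-1$ lives in the tangential component $\ct_{\mathcal{I},a\underbrace{\scriptstyle{\infty\cdots\infty}}_{n/2-1}}$, and equation \eqref{commute} together with Lemma~\ref{divergence} expresses this in terms of the curvature component $\Rt^J{}_{I_sa\infty,\,\underbrace{\scriptstyle{\infty\cdots\infty}}_{(n-4)/2}}$ contracted into $\chi$, exactly as in \eqref{determine}. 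So the first step is to write out this critical-order integrability condition as a concrete equation on $M$ involving $\chi$ from \eqref{tractor1form} and the ambiguity-dependent curvature components.

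Next I would exploit the fact that for a Poincar\'e-Einstein class the ambient metric can be built very explicitly from $g_+$. Since $g_+ = r^{-2}g$ is Einstein with $\Ric(g_+) = -(n-1)g_+$ on $\{r \neq 0\}$, on that open set one has the canonical ambient metric $\gt^c$ from \eqref{ambcan} and the explicit parallel extension $\ct = d[t(1-\lambda\rho)]$ computed before Proposition~\ref{canonicalparallel}, with $\lambda = -1$. The strategy is to use the normal form $g = dr^2 + h_r$ to compute the relevant ambient curvature component $\Rt_{\infty ia\infty,\,\underbrace{\scriptstyle{\infty\cdots\infty}}_{(n-4)/2}}$ (which by the discussion after \eqref{ambcan} parametrizes the ambiguity, via Proposition~6.6 of \cite{FG2}) as a function on $M$, and to track how it degenerates as $r \to 0$, i.e.\ as one approaches $\Sigma = \{\sigma = 0\}$. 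The key computation is to show that the critical integrability condition \eqref{determine}, which holds automatically on $\{r\neq 0\}$ because $\gt^c$ is genuinely parallel there, extends smoothly across $\Sigma$ precisely when the ambiguity tensor one is forced to choose extends smoothly across $\Sigma$; and that the sole smooth obstruction to this is the jet $\partial_r^{n-1}h_r|_\Sigma$.

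Concretely, I expect the mechanism to be as follows. Proposition~\ref{rank1} tells us $\chi$ is determining, so by Proposition~\ref{determining} there is at most one candidate ambiguity, and the contraction of \eqref{determine} with a vector field $v$ forces $\eta_i = v^a\Rt_{\infty ia\infty,\,\underbrace{\scriptstyle{\infty\cdots\infty}}_{(n-4)/2}}$ to equal $-F_\chi^{-1}(D.\chi)$, determining the ambiguity component $\Rt_{\infty ia\infty,\,\underbrace{\scriptstyle{\infty\cdots\infty}}_{(n-4)/2}}$ uniquely as a tensor on $\{\sigma \neq 0\}$. An infinite-order ambient metric exists iff this uniquely-determined tensor, which a priori is defined and smooth only off $\Sigma$, extends to a \emph{smooth} trace-free symmetric $2$-tensor on all of $M$ with the correct divergence. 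I would then compute the expansion of this tensor near $\Sigma$ using $g = dr^2 + h_r$, and show that the term controlling its smooth extendability across $\Sigma$ is exactly a nonzero multiple of $\partial_r^{n-1}h_r|_\Sigma$. Thus the tensor extends smoothly iff $\partial_r^{n-1}h_r|_\Sigma = 0$, i.e.\ iff $g_+$ is even, giving both directions of the equivalence.

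The main obstacle I anticipate is the last step: relating the critical ambiguity component, which is defined implicitly through the parallel-extension condition on the Einstein side $\{r\neq 0\}$, to the Fefferman–Graham jet $\partial_r^{n-1}h_r|_\Sigma$ on $\Sigma$. This requires a careful singular expansion of the $r$-dependent quantities as $r\to 0$ and a clean identification of which Taylor coefficient of $h_r$ survives in the leading obstruction, controlling factors of $r$ that appear through $\sigma = r$ and its derivatives in \eqref{tractor1form}. Managing this limit — showing the expected cancellations leave precisely the $(n-1)$st normal derivative of $h_r$, with the right nonzero universal constant — is the technical heart of the argument, and is where I would expect to invoke the detailed structure of Poincar\'e-Einstein expansions from \cite{FG2}.
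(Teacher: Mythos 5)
Your outline follows the same route as the paper: reduce via Theorem~\ref{main} and Proposition~\ref{beyondcritical} to the single critical-order condition coming from \eqref{commute} (the paper's \eqref{ambcurv}), use Propositions~\ref{determining}, \ref{rank1} and \ref{canonicalparallel} to see that on $\{r\neq 0\}$, where $g_+$ is genuinely Einstein, the ambiguity component $\Rt_{\infty ia\infty,\,\underbrace{\scriptstyle{\infty\cdots\infty}}_{(n-4)/2}}$ is uniquely forced, and then characterize existence by smooth extendability of this forced tensor across $\Sigma$, with $\pa_r^{n-1}h_r|_\Sigma$ as the obstruction.

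However, there is a genuine gap: the identification of the critical-order obstruction with $\pa_r^{n-1}h_r|_\Sigma$ --- which you yourself single out as the technical heart --- is only announced (``I would then compute\dots and show\dots''), never carried out, and this is exactly where the content of the proposition lies; without it neither direction of the equivalence is proved. The paper's proof of this step rests on two concrete ingredients missing from your proposal. First, the structural formula \eqref{deltacotton}: the ambiguity-independent component satisfies $\Rt_{jia\infty,\,\underbrace{\scriptstyle{\infty\cdots\infty}}_{(n-4)/2}} = c\,\Delta^{(n-4)/2}C_{aij}+\Lambda_{aij}$ with $c\neq 0$, where $C_{aij}$ is the Cotton tensor of $g$ and $\Lambda_{aij}$ is a linear combination of at-least-quadratic contractions involving at most $n-5$ derivatives of curvature (hence at most $n-3$ derivatives of $g$). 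Second, a parity count: since $h_r$ is even in $r$ to order $n-2$, in the splitting $i\leftrightarrow(\al,\ds)$ every term of $\Lambda_{\al\be\ds}$ contains a factor $\nabla^{m}R$ with an odd number of $\ds$ indices and hence vanishes on $\Sigma$, while the linear term evaluates as $\Delta^{(n-4)/2}C_{\al\be\ds}=c'\,\pa_r^{n-1}h_{\al\be}$ on $\Sigma$ with $c'\neq 0$. With these in hand both directions are short: the forward direction is immediate because the left-hand side of \eqref{ambcurv} carries the factor $r$ (note $\chi_\nf=0$ for Poincar\'e--Einstein classes, which you never establish but implicitly use) and so vanishes on $\Sigma$, forcing the right-hand side, hence $\pa_r^{n-1}h_r|_\Sigma$, to vanish; the converse is obtained by taking $I=\infty$ in \eqref{ambcurv} to kill the components with a $\ds$ index and then dividing the tangential components by $r$, which is legitimate precisely because evenness makes the right-hand side vanish on $\Sigma$ --- no general singular expansion of the forced tensor is needed. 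Two minor further points: with the paper's normalization $\la=J/n$ and $\Ric(g_+)=-(n-1)g_+$ one has $\la=-\tfrac12$, not $-1$; and the trace-freeness and divergence constraint on the extended ambiguity tensor require no separate verification, since they hold on $\Sigma$ by continuity.
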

\begin{proof}
Write $g=dr^2+h_r$ near $\Sigma$ as above.  Note first that since 
$|dr|^2_g=1$, the 
conformal transformation law $P_{g_+}=P_g+r^{-1}\nabla^2r -\frac12
|dr|^2_g\,g_+$ of the 
Schouten tensor and the Einstein condition on $g_+$ imply that  
$\nabla^2 r+P_gr=0$.  Taking the trace gives $\Delta r+Jr=0$, so
$\chi_\nf=0$.      

Let $\gt$ be an infinite-order ambient metric for $(M,c)$ in normal  
form relative to $g$ and let $\ct$ be the extension given in the 
proof 
of Theorem~\ref{main} which satisfies $\nt\ct=O(\rho^{(n-2)/2})$.  The
first steps in the induction in Theorem~\ref{main} hold also at the next
order, so we know that 
$\ct_{I,\infty
  a\underbrace{\scriptstyle{\infty\cdots\infty}}_{(n-4)/2}}=0$
at $\rho =0$, $t=1$, and that $\chi$ has an extension 
satisfying $\nt\ct=O(\rho^{n/2})$ if and only if  
$\ct_{I,a\underbrace{\scriptstyle{\infty\cdots\infty}}_{(n-2)/2}}=0$   
at $\rho =0$, $t=1$.  All succeeding expressions are understood to 
be evaluated at $\rho =0$, $t=1$.    Equation \eqref{commute} 
thus shows that $\chi$ has an extension  
satisfying $\nt\ct=O(\rho^{n/2})$ if and only if  
$\Rt^J{}_{Ia\infty,\,\underbrace{\scriptstyle{\infty\cdots\infty}}_{(n-4)/2}}\ct_J
=0$; that is if and only if (recall $\chi_\nf=0$)
\begin{equation}\label{ambcurv}
\Rt_{\infty Ia\infty,\,\underbrace{\scriptstyle{\infty\cdots\infty}}_{(n-4)/2}}r
=-\Rt^j{}_{Ia\infty,\,\underbrace{\scriptstyle{\infty\cdots\infty}}_{(n-4)/2}}r_j.
\end{equation}

The right-hand side of \eqref{ambcurv} for $I=i$ is
independent of the ambiguity 
of the ambient metric; it is determined by $g$ alone.  Using the 
expression for the leading terms in the ambient metric expansion 
(see (3.21) of \cite{FG2}), it is not hard to see that 
the tensor 
$\Rt_{jia\infty,\,\underbrace{\scriptstyle{\infty\cdots\infty}}_{(n-4)/2}}$
which appears on the right-hand side of \eqref{ambcurv} 
for $I=i$ has the form 
\begin{equation}\label{deltacotton}
\Rt_{jia\infty,\,\underbrace{\scriptstyle{\infty\cdots\infty}}_{(n-4)/2}}
=c\Delta^{(n-4)/2}C_{aij} + \Lambda_{aij},
\end{equation}
where $C_{aij}=P_{ai,j}-P_{aj,i}$ denotes the Cotton tensor of $g$, $c\neq
0$, and 
$\Lambda_{aij}$ is a linear combination of contractions of covariant
derivatives of curvature of $g$ which are at least quadratic and involve at
most $n-5$ derivatives of curvature. 

Label objects on $\Sigma$ by Greek letters $\al$, $\be$ and let 
a $\ds$ index correspond to $r$ in the identification $M\sim \Sigma\times
\R$, so that $i\leftrightarrow (\al,\ds)$.  Denote $\partial_r$ by $'$.    
We first show that the right-hand side of \eqref{ambcurv} for 
$I=\be$ and $a=\al$ vanishes on $\Sigma$ if and only if $g_+$ is even.     
By \eqref{deltacotton} we have
$$
\Rt^j{}_{\be\al\infty,\,\underbrace{\scriptstyle{\infty\cdots\infty}}_{(n-4)/2}}r_j
=\Rt_{\ds\be\al\infty,\,\underbrace{\scriptstyle{\infty\cdots\infty}}_{(n-4)/2}}
=c\Delta^{(n-4)/2}C_{\al\be\ds} + \Lambda_{\al\be\ds}.
$$
Now $\Lambda_{\al\be\ds}$ can be written as a 
linear combination of contractions 
$\text{contr}(\nabla^{m_1}R\otimes \cdots \otimes\nabla^{m_N}R)$ 
with three free indices $\al$, $\be$, $\ds$.  The contractions can be 
expanded corresponding to the block diagonal form of $g$.  In each term, 
at least one of the $\nabla^{m_i}R$ will have an odd number of $\ds$
indices.  Since $h_r$ is even to order $n-2$ and $\Lambda$ involves at most
$n-3$ derivatives of $g$, this component $\nabla^{m_i}R$ vanishes on
$\Sigma$.  Thus $\Lambda_{\al\be\ds}=0$ on $\Sigma$.  
Similar analysis considering also the leading nonzero term shows that   
$$
\Delta^{(n-4)/2}C_{\al\be
  \diamondsuit}=c'\pa_r^{n-1}h_{\al\be}\quad\text{on  }\Sigma
$$
where $c'\neq 0$.  Thus the right-hand side of \eqref{ambcurv}
for $I=\be$, $a=\al$ 
vanishes on $\Sigma$ if and only if $\pa_r^{n-1}h_{\al\be}=0$ on 
$\Sigma$, that is if and only if $g_+$ is even.  

It follows immediately that if $\chi$ has an 
ambient extension satisfying $\nt\ct=O(\rho^{n/2})$ then $g_+$ is even,
since the left-hand side of \eqref{ambcurv} clearly vanishes on $\Sigma$.  

Next we prove the converse. 
Away from $\Sigma$, $\chi$ is the parallel    
tractor associated to the Einstein metric $g_+$.  
We have seen in this case in Proposition~\ref{canonicalparallel} that  
there is a unique
determination of the ambiguity in the ambient metric so that $\chi$ has an
ambient extension satisfying $\nt\ct=O(\rho^{n/2})$.  Thus away from
$\Sigma$ there is a unique determination of 
$\Rt_{\infty
  ia\infty,\,\underbrace{\scriptstyle{\infty\cdots\infty}}_{(n-4)/2}}$ 
for which \eqref{ambcurv} holds.  
We will show that if $g_+$ is even, then the so-determined tensor
$\Rt_{\infty
  ia\infty,\,\underbrace{\scriptstyle{\infty\cdots\infty}}_{(n-4)/2}}$ 
extends smoothly across $\Sigma$.  The desired result follows immediately.  
This tensor is trace-free and satisfies the divergence constraint on
$\Sigma$ relative to $g$ by continuity, so it determines an infinite-order
ambient metric. All quantities appearing in \eqref{ambcurv} 
are then smooth across $\Sigma$, so \eqref{ambcurv} holds on $\Sigma$ by
taking limits.  (If $g_+$ is not even, the tensor 
$\Rt_{\infty
  ia\infty,\,\underbrace{\scriptstyle{\infty\cdots\infty}}_{(n-4)/2}}$
uniquely determined away from $\Sigma$ blows up on approach to $\Sigma$.)    

To see that 
$\Rt_{\infty
  ia\infty,\,\underbrace{\scriptstyle{\infty\cdots\infty}}_{(n-4)/2}}$
extends smoothly across $\Sigma$, first 
take $I=\infty$ in \eqref{ambcurv} to obtain
$\Rt_{\infty\ds
  a\infty,\,\underbrace{\scriptstyle{\infty\cdots\infty}}_{(n-4)/2}}=0$.
So the components 
$\Rt_{\infty
  ia\infty,\,\underbrace{\scriptstyle{\infty\cdots\infty}}_{(n-4)/2}}$ 
with $i$ or $a=\ds$ certainly extend smoothly across $\Sigma$; they
vanish identically nearby.  We have seen that if $g_+$ is even, 
then the right 
hand side of \eqref{ambcurv} for $I=\be$, $a=\al$ extends smoothly across
$\Sigma$ and vanishes on $\Sigma$.  Thus dividing by $r$ shows that 
$\Rt_{\infty
  \be\al\infty,\,\underbrace{\scriptstyle{\infty\cdots\infty}}_{(n-4)/2}}$ 
extends smoothly across $\Sigma$ as well.  
\end{proof}

If $n$ is even and $g_+$ is an even Poincar\'e-Einstein metric, then
the ambient metric of Proposition~\ref{evenpe} with respect to which $\chi$
has an ambient extension parallel to infinite order
is uniquely determined to infinite order 
up to diffeomorphism by Propositions~\ref{determining} and \ref{rank1}.
It may therefore be regarded as a distinguished ambient metric for the
conformal class $c$ determined by $g_+$.
Proposition~\ref{canonicalparallel} shows that away 
from $\Sigma$ it 
agrees to infinite order up to diffeomorphism with the canonical ambient
metric determined by the Einstein metric $g_+
\in c|_{M\setminus\Sigma}$ in the sense of our previous discussion.    

We conclude our discussion of Poincar\'e-Einstein metrics by showing   
that in all dimensions, one can identify explicitly the parallel ambient 
extension of the parallel tractor 1-form \eqref{tractor1form} corresponding
to an even Poincar\'e-Einstein metric (as well as the ambient metric with
respect to which it is parallel). 
This is closely related to Theorem 6.3 of \cite{Go3} and uses a   
formula of Juhl \cite{J} for a Poincar\'e-Einstein metric whose conformal
infinity is $g=dr^2+h_r$, where $g_+=r^{-2}g$ is an 
even Poincar\'e-Einstein metric.  Recall that we defined a
Poincar\'e-Einstein metric to be even 
if $g_+=r^{-2}(dr^2+h_r)$ where the Taylor expansion of $h_r$ is even in
$r$ to infinite order, and this holds for all Poincar\'e-Einstein metrics
if $n$ is odd.  If $g_+$ is even and positive definite, Biquard's  
unique continuation theorem (\cite{B}) shows that $h_r=h_{-r}$ for $r$ near 
$0$.  But we do not know if this holds for other signatures.  For
simplicity of exposition, in the subsequent discussion we will restrict 
attention to even Poincar\'e-Einstein metrics for which $h_r=h_{-r}$.  
This is no real loss of generality:  one can construct two such metrics
from any even Poincar\'e-Einstein metric by reflecting across $r=0$ the
restriction of $h_r$ to either $r> 0$ or $r< 0$.  

Let $n\geq 3$ and let $g_+=r^{-2}g$ be an even Poincar\'e-Einstein metric
with $g=dr^2+h_r$ where $h_r=h_{-r}$.  Define $k_u$ for $u\geq 0$ small by 
$k_u=h_{\sqrt{u}}$ so that $k_u$ is smooth up to $u=0$ and $h_r=k_{r^2}$
for all $r$ near $0$.  Juhl has discovered (Theorem 7.2 of \cite{J}) that
the metric $g_{++}$ given by     
\begin{equation}\label{doublepe}
g_{++}=s^{-2}\big(ds^2+dr^2+k_{r^2+s^2}\big)
\end{equation}
is a Poincar\'e-Einstein metric with conformal infinity $[g]$ in 
normal form relative to $g$.   
Juhl proves this by a direct calculation that $g_{++}$ satisfies 
$\Ric(g_{++})=-ng_{++}$ for $s\neq 0$.
An alternate proof (which could be used to guess the result) is to begin 
with the Einstein metric $g_+$ in the   
conformal class $[g]$ away from $\Sigma$.  The formula for the
``canonical'' 
Poincar\'e metric associated to an Einstein metric (see (7.13) of
\cite{FG2}; this is the Poincar\'e metric analogue of \eqref{ambcan} above)
gives 
\begin{equation}\label{peagain}
g_{++}=s^{-2}\left[ds^2+(1+\tfrac14 s^2)^2g_+\right].
\end{equation}
Now $g_{++}$ can be put into normal form relative to the metric
$g=dr^2+h_r$ in the 
conformal class.  This can be carried out explicitly:  
substituting $g_+=r^{-2}(dr^2+k_{r^2})$ in \eqref{peagain} 
and making the change of variables
$$
r=\sqrt{r'^2+s'^2}\qquad\qquad s=2\frac{\sqrt{r'^2+s'^2}-r'}{s'}
$$
with inverse
$$
r'=\frac{4-s^2}{4+s^2}\;r\qquad\qquad s'=\frac{sr}{1+\tfrac14 s^2}
$$
one obtains without difficulty
$g_{++}=s'^{-2}\big(ds'^2+dr'^2+k_{r'^2+s'^2}\big)$.  Relabeling the
variables gives \eqref{doublepe}.  

The fact that $g_+$ is even is of course not used in verifying that
$g_{++}$ defined  
by \eqref{doublepe} is Einstein.  But if $g_+$ is not even, then $g_{++}$
has as conformal infinity $[dr^2+h_{|r|}]$, which is not smooth across
$\{r=0\}$ and does not agree with $[g]$ for $r<0$.  

Recall that in normal form, an ambient metric and the associated even
Poincar\'e metric 
are related by the change of variable $s^2=-2\rho$; see Chapter 4  
of \cite{FG2}.  Thus the metric $\gt_+$ defined by \eqref{normalform} with
$g_\rho=dr^2+k_{r^2-2\rho}$ satisfies $\Ric(\gt_+)=0$ for $\rho<0$.  (In
fact, this holds where $g_\rho$ makes sense, i.e. for $2\rho<r^2$.)  If we 
choose some smooth extension of $k_u$ to $\{u<0\}$ and define $g_\rho$ by
the same formula, then $\gt_+$ is defined in a neighborhood of $\{\rho=0\}$  
and is an ambient metric (to infinite order in any dimension) for $[g]$ in 
normal form relative to $g$.
\begin{proposition}\label{peextension}
Suppose $n\geq 3$ and let $g_+=r^{-2}g$ be an even Poincar\'e-Einstein
metric, where $g=dr^2+k_{r^2}$.  Let $\chi$ be the associated parallel
tractor 1-form given by \eqref{tractor1form}.  Let $\gt_+$ be the ambient
metric defined 
by \eqref{normalform} with $g_\rho=dr^2+k_{r^2-2\rho}$ as above.  Then the 
1-form $\ct=rdt+tdr=d(rt)$ is a parallel ambient extension of $\chi$.  
($\ct$ is independent of $\rho$ and has zero $d\rho$ component.)  
\end{proposition}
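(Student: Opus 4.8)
The plan is to reduce the parallelism statement to the vanishing of a Hessian and then verify it componentwise, with essentially all of the substance concentrated in a single block. Since $\ct = d(rt)$ is exact, writing $\phi = rt$ we have $\ct_I = \pa_I\phi$, so $\nt_J\ct_I = \pa_J\pa_I\phi - \Gat^K_{JI}\pa_K\phi$ is simply the $\gt_+$-Hessian of $\phi$; in particular it is automatically symmetric, and proving $\nt\ct = 0$ amounts to showing that $\phi = rt$ has vanishing Hessian. Before that I would check that $\ct$ is an ambient extension of $\chi$. For the homogeneity, under $\delta_s$ one has $t\mapsto st$ with $r$ and $\rho$ fixed, so $\delta_s^*(rt) = s\,rt$ and hence $\delta_s^*\ct = s\ct$, the correct degree for a section of $\cT^*$. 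Restricting to $\cG$, i.e.\ setting $\rho=0$, $t=1$, gives $\ct = r\,dt + dr$, whose components $(\ct_0,\ct_i,\ct_\infty) = (r, r_i, 0)$ agree with those of $\chi$ in \eqref{tractor1form}, using that $\chi_\infty = -\tfrac1n(\Delta r + Jr) = 0$ as recorded at the start of the proof of Proposition~\ref{evenpe}.

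To compute the Hessian I would use the Christoffel symbols \eqref{cnr} with $g_\rho = dr^2 + k_{r^2-2\rho}$, keeping the index splitting of the proof of Proposition~\ref{evenpe}, with $\al,\be$ for $\Sigma$-directions and $\ds$ for the $r$-direction among the $M$-indices. The only nonvanishing partials of $\phi = rt$ are $\pa_t\phi = r$, $\pa_r\phi = t$ and $\pa_t\pa_r\phi = 1$, so the only nonzero components of $\ct$ are $\ct_0 = r$ and $\ct_\ds = t$, while $\ct_\al = 0$ and $\ct_\infty = 0$. Because $g_{rr} = 1$ and $g_{r\al} = 0$ in the block form $g_\rho = dr^2 + k_{r^2-2\rho}$, the Christoffel symbols $\Gat^\ds_{\ds\ds}$, $\Gat^\ds_{\ds\al}$, $\Gat^\ds_{\ds\infty}$ and the relevant $\pa_\rho g_{\ds i}$ all vanish, so almost every component of the Hessian is immediately zero; the one mildly nontrivial routine check is the $(0,\ds)$ component, which is $\pa_t\ct_\ds - \Gat^\ds_{0\ds}\ct_\ds = 1 - t^{-1}\cdot t = 0$.

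The one component that requires genuine cancellation is the tangential--tangential block $\nt_\al\ct_\be$, and I expect this to be the heart of the proof; everything else is bookkeeping. Here two terms survive: the contraction of $\Gat^0_{\al\be} = -\tfrac12 t\,g'_{\al\be}$ with $\ct_0 = r$, and of $\Gat^\ds_{\al\be} = \Ga^\ds_{\al\be} = -\tfrac12\pa_r g_{\al\be}$ with $\ct_\ds = t$, giving
\[
\nt_\al\ct_\be = \tfrac12 tr\,g'_{\al\be} + \tfrac12 t\,\pa_r g_{\al\be}.
\]
The crucial observation is that $g_{\al\be} = (k_u)_{\al\be}$ depends on $(r,\rho)$ only through $u = r^2 - 2\rho$, so by the chain rule $g'_{\al\be} = \pa_\rho g_{\al\be} = -2(\pa_u k_u)_{\al\be}$ while $\pa_r g_{\al\be} = 2r(\pa_u k_u)_{\al\be}$; substituting yields $-tr(\pa_u k_u)_{\al\be} + tr(\pa_u k_u)_{\al\be} = 0$. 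Thus the specific coupling of the $r$- and $\rho$-dependence in Juhl's metric through the combination $r^2 - 2\rho$ is precisely what forces this block to vanish, and with it $\nt\ct = 0$.
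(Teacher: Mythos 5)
Your proposal is correct and follows essentially the same route as the paper's own proof: both reduce the claim to $\nt^2(rt)=0$, verify it from the Christoffel symbols \eqref{cnr}, and use $\chi_\nf=0$ (from the proof of Proposition~\ref{evenpe}) to identify $\ct|_{\cG}$ with $\chi$. The paper leaves the Hessian computation as a ``straightforward verification''; you have simply written it out, correctly isolating the $(\al,\be)$ block where the chain rule applied to $u=r^2-2\rho$ produces the cancellation, exactly the point behind the paper's remark that only the form $g_\rho=dr^2+k_{r^2-2\rho}$ matters and the Einstein condition does not enter.
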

\begin{proof}
We have observed at the beginning of the proof of Proposition~\ref{evenpe}
that $\chi_\nf=0$, so $\ct$ is certainly an 
ambient extension of $\chi$.  The condition $\nt\ct=0$ becomes
$\nt^2(rt)=0$ and is a straightforward  
verification from \eqref{cnr}; the function $rt$ is a quadratic polynomial 
in the coordinates.  The verification uses only that $g_\rho$ has the form     
$dr^2+k_{r^2-2\rho}$ for some 1-parameter family of metrics $k$ on
$\Sigma$; the Einstein condition does not enter.
\end{proof}

Proposition~\ref{peextension} of course gives an alternate proof 
of the existence of an ambient extension of $\chi$
parallel to infinite order (Theorem~\ref{main} and
Proposition~\ref{evenpe}) for even Poincar\'e-Einstein metrics.  It also
identifies explicitly the distinguished ambient metric associated to  
$g_+$ defined immediately after the proof of Proposition~\ref{evenpe}.  
Finally, it gives an explicit realization of Theorem 6.3 of \cite{Go3}:      
by the relation again between ambient metrics and Poincar\'e metrics in
normal form, the ambient metric in normal form relative to the metric
$h_0=g|_{T\Sigma}$ on $\Sigma$ is 
$\gt=2\rho dt^2+2tdtd\rho +t^2k_{-2\rho}=\gt_+|_{r=0,dr=0}$.  

Consider next the case of tractor 2-forms.  Again there is much literature;
see e.g. \cite{Leit1}, \cite{Leit2}, \cite{Go2}, \cite{H}.  A section 
$\chi\in\Gamma(\Lambda^2\cT^*)$ has 
components $\chi_{0j}\in \Gamma(T^*M)$, $\chi_{ij}\in
\Gamma(\Lambda^2T^*M)$, $\chi_{0\nf}\in C^\nf(M)$, $\chi_{j\nf}\in
\Gamma(T^*M)$ with   
respect to the splitting determined by a choice of representative 
$g$.   Just as for tractor 1-forms, writing the equation $\nabla  
\chi=0$ in terms of components using the tractor Christoffel symbols
\eqref{tracchrist} 
shows that a parallel $\chi$ is determined by its projecting part
$\chi_{0j}$, and the projecting part satisfies the conformal Killing
equation.   
Set $\alpha_j=\chi_{j0}=-\chi_{0j}$.  If $\nabla \chi=0$, then $\alpha$
satisfies $\alpha_{(i,j)}=\frac{1}{n}\alpha_{k,}{}^kg_{ij}$ and one has 
\begin{equation}\label{2form}
\chi_{ij}=\alpha_{[i,j]},\qquad \chi_{0\infty}=\tfrac{1}{n}\alpha_{k,}{}^k,
\qquad \chi_{j\nf}=\tfrac{1}{n}\alpha_{k,}{}^k{}_j+P_j{}^k\alpha_k. 
\end{equation}
If $\chi$ is not identically $0$ then it is nonvanishing, so the 2-jet of 
$\alpha$ is nonvanishing and $\{\alpha\neq 0\}$ is dense.    
\begin{proposition}\label{rank2}
Let $\chi\in\Gamma(\Lambda^2\cT^*)$.  If 
$\alpha_k\alpha^k\neq 0$ on a dense set, then $\chi$ is 
determining.    
\end{proposition}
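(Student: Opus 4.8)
The plan is to follow the template of the rank-one case (Proposition~\ref{rank1}): work in the splitting determined by a representative $g$, compute the components of $F_\chi(\eta)=\cI(\eta).\chi$ explicitly, and read off enough of them to force $\eta=0$ whenever $F_\chi(\eta)$ vanishes on all of $M$. Recall that $\cI(\eta)$ has only the two nonzero blocks $\cI(\eta)^0{}_i=\eta_i$ and $\cI(\eta)^j{}_\infty=-\eta^j$, and that the action of an adjoint tractor $K$ on a cotractor $2$-form is the derivation $(K.\chi)_{IK}=-K^J{}_I\chi_{JK}-K^J{}_K\chi_{IJ}$. So all I need to do is contract these two blocks into the components \eqref{2form} of $\chi$ and isolate a pair of well-chosen components of $F_\chi(\eta)$.

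First I would compute the purely tangential component. Using $\chi_{0j}=-\alpha_j$ from the normalization $\alpha_j=\chi_{j0}$, only the $J=0$ terms survive, and one finds $F_\chi(\eta)_{ij}=\eta_i\alpha_j-\eta_j\alpha_i$; that is, the tangential part of $F_\chi(\eta)$ is $\eta\wedge\alpha$. This already exposes the essential new feature compared with the rank-one case: because $\alpha$ is a $1$-form rather than a nonvanishing density, this component alone only forces $\eta$ to be pointwise proportional to $\alpha$, not to vanish. Hence a single component does not suffice, and I must extract a second, independent scalar relation.

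That second relation comes from the ``top--bottom'' component $F_\chi(\eta)_{0\infty}$. Here the block $\cI(\eta)^J{}_0$ is empty, so only $\cI(\eta)^j{}_\infty=-\eta^j$ contributes, giving $F_\chi(\eta)_{0\infty}=\eta^j\chi_{0j}=-\eta^k\alpha_k$, the $g$-inner product of $\eta$ and $\alpha$ up to sign. Thus the vanishing of $F_\chi(\eta)$ yields simultaneously $\eta\wedge\alpha=0$ and $\langle\eta,\alpha\rangle=0$.

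Finally I would combine the two relations pointwise. Suppose $F_\chi(\eta)=0$ on $M$. On the dense open set $\{\alpha_k\alpha^k\neq 0\}$ the covector $\alpha$ is non-null, hence nonzero; vanishing of $\eta\wedge\alpha$ then forces $\eta=c\,\alpha$ for a scalar function $c$, and vanishing of $\langle\eta,\alpha\rangle=c\,\alpha_k\alpha^k$ forces $c=0$ since $\alpha_k\alpha^k\neq0$. So $\eta=0$ on a dense set, and by continuity $\eta\equiv0$; thus $F_\chi$ is injective on $\Gamma(T^*M)$ and $\chi$ is determining. The one point requiring care --- the place where the hypothesis is genuinely used --- is recognizing that the tangential component only delivers proportionality, and that it is precisely the non-nullity of $\alpha$ that allows the scalar component $F_\chi(\eta)_{0\infty}$ to kill the surviving coefficient; everything else is routine index bookkeeping.
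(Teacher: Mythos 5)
Your proof is correct and is essentially identical to the paper's: both compute the component $F_\chi(\eta)_{ij}=2\eta_{[i}\alpha_{j]}$ to force $\eta=c\,\alpha$ where $\alpha\neq 0$, then use $F_\chi(\eta)_{0\infty}=-\eta^k\alpha_k$ together with non-nullity of $\alpha$ to kill $c$ on a dense set, and conclude $\eta=0$ everywhere by continuity. No gaps; the index bookkeeping matches the paper's conventions $\cI(\eta)^0{}_i=\eta_i$, $\cI(\eta)^j{}_\infty=-\eta^j$ exactly.
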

\begin{proof}
We have
$F_\chi(\eta)_{IJ}=-\cI(\eta)^K{}_I\chi_{KJ}-\cI(\eta)^K{}_J\chi_{IK}$, so 
$F_\chi(\eta)_{ij}=2\eta_{[i}\alpha_{j]}$.  Thus if 
$F_\chi(\eta)=0$, then on the set where $\alpha\neq 0$ we have
$\eta=c\alpha$ for a smooth function $c$.    
Now $F_\chi(\eta)_{0\infty}=0$ gives $\eta^k\alpha_{k}=0$, so it follows
that $c=0$ where $\alpha^k\alpha_k\neq 0$.  Hence $\eta=0$ on a dense 
set, so $\eta=0$ everywhere since it is smooth.
\end{proof}
\noindent
In particular, if the signature is definite, every nonzero parallel tractor 
2-form is determining.  

On the other hand, for the null case we have
\begin{proposition}\label{rank2null}
Let $\chi\in\Gamma(\Lambda^2\cT^*)$ satisfy $\nabla\chi =0$.  If 
$\alpha_k\alpha^k$ vanishes identically, then $\chi$ is not determining.       
\end{proposition}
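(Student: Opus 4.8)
The plan is to prove that $\chi$ is not determining by producing a nonzero $\eta\in\Gamma(T^*M)$ lying in the kernel of $F_\chi$. The natural candidate is $\eta=\alpha$, the projecting part of $\chi$. I would first record that this candidate is genuinely nonzero when $\chi\neq 0$: since $\chi$ is parallel it is then nonvanishing, so $\{\alpha\neq 0\}$ is dense and in particular $\alpha\not\equiv 0$. (If $\chi\equiv 0$ the assertion is immediate, since $F_\chi$ is then the zero map, which is not injective as $n\geq 3$.) It therefore suffices to verify the single identity $F_\chi(\alpha)=0$.

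The next step is to expand $F_\chi(\eta)=\cI(\eta).\chi$ into components relative to a representative $g$, using $\cI(\eta)^0{}_i=\eta_i$, $\cI(\eta)^j{}_\infty=-\eta^j$, all other $\cI(\eta)^J{}_I=0$, together with the component formulas \eqref{2form} for $\chi$. Just as in the proof of Proposition~\ref{rank2}, one finds $F_\chi(\eta)_{ij}=2\eta_{[i}\alpha_{j]}$ and $F_\chi(\eta)_{0\infty}=-\eta^k\alpha_k$, while the $0j$-component vanishes identically. A short computation gives the one remaining component, $F_\chi(\eta)_{i\infty}=-\phi\,\eta_i-\eta^k\alpha_{[k,i]}$, where I abbreviate $\phi:=\tfrac1n\alpha_{k,}{}^k$. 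Specializing to $\eta=\alpha$, the $ij$-component vanishes trivially and the $0\infty$-component vanishes by the hypothesis $\alpha^k\alpha_k=0$.

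The one real calculation, which I expect to be the main obstacle, is showing that the $i\infty$-component also vanishes at $\eta=\alpha$, i.e. that $\phi\,\alpha_i+\alpha^k\alpha_{[k,i]}=0$. Here I would use two inputs. Differentiating the hypothesis $\alpha^k\alpha_k=0$ and using $\nabla g=0$ gives $\alpha^k\alpha_{k,i}=0$. Since $\chi$ is parallel, its projecting part obeys the conformal Killing equation $\alpha_{(i,j)}=\tfrac1n\alpha_{k,}{}^k g_{ij}$, whose contraction with $\alpha^k$ yields $\alpha^k\alpha_{i,k}=2\phi\,\alpha_i-\alpha^k\alpha_{k,i}=2\phi\,\alpha_i$. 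Combining,
$$
\alpha^k\alpha_{[k,i]}=\tfrac12\big(\alpha^k\alpha_{k,i}-\alpha^k\alpha_{i,k}\big)=-\phi\,\alpha_i,
$$
which exactly cancels the $\phi\,\alpha_i$ term and gives $F_\chi(\alpha)_{i\infty}=0$. Thus $F_\chi(\alpha)=0$ with $\alpha\not\equiv 0$, so $F_\chi$ fails to be injective on $\Gamma(T^*M)$ and $\chi$ is not determining.
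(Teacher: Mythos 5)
Your proof is correct and follows essentially the same route as the paper's: exhibit $\alpha$ itself in the kernel of $F_\chi$, with the $ij$- and $0\infty$-components vanishing as in Proposition~\ref{rank2}, and the $j\infty$-component vanishing by combining $\alpha^k\alpha_{k,j}=0$ (differentiating nullity) with the $\alpha$-contracted conformal Killing equation. The computations check out, including the handling of the trivial case $\chi\equiv 0$ and the observation that $\alpha\not\equiv 0$ otherwise.
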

\begin{proof}
We can assume that $\alpha$ is not identically zero.  It suffices to show
that $F_\chi(\alpha)=0$.  We have $F_\chi(\eta)_{j0}=0$ for any $\eta$.
The argument in the proof of 
Proposition~\ref{rank2} shows that $F_\chi(\alpha)_{ij}=0$ and  
$F_\chi(\alpha)_{0\infty}=0$.  It remains to show that 
$F_\chi(\alpha)_{j\infty}=0$.  Using \eqref{2form} we have 
$$
-F_\chi(\alpha)_{j\infty}=\alpha_j\chi_{0\infty}-\alpha^k\chi_{jk}
=\tfrac{1}{n}\alpha_j\alpha_{k,}{}^k-\alpha^k\alpha_{[j,k]}
=\tfrac{1}{n}\alpha_j\alpha_{k,}{}^k-\tfrac12 \alpha^k\alpha_{j,k}
+\tfrac12 \alpha^k\alpha_{k,j}.  
$$
Now $\alpha^k\alpha_{k,j}=0$ since $\alpha$ is null.  Contracting the
conformal Killing equation with $\alpha$ then gives
$\alpha^k\alpha_{j,k}=\frac{2}{n}\alpha_j\alpha_{k,}{}^k$, so   
$F_\chi(\alpha)_{j\infty}=0$ as desired.
\end{proof}

Proposition~\ref{rank2null} suggests that parallel tractor 2-forms with
null projecting part are candidates for existence of parallel   
ambient extensions for more than one infinite-order ambient metric.  We
show next that this happens for Fefferman conformal 
structures associated to nondegenerate hypersurfaces in $\C^n$.  

In \cite{Leit3} and \cite{CG2}, it was shown that that Fefferman conformal   
structures of nondegenerate integrable CR manifolds of hypersurface type
are locally 
characterized by the existence of a parallel almost complex structure 
$\J^I{}_J$ on $\cT$ such that $\chi_{IJ}:=\J_{IJ}$ is a tractor  
2-form, where the index is lowered with the tractor metric.  Equivalently
they are characterized by the existence of a parallel tractor 2-form $\chi$
such that raising an index gives an almost complex structure.  The
underlying manifold $M$ of the  
Fefferman conformal structure is a circle bundle over the CR manifold 
and $\J$ is determined by the property that the vector field on $M$  
given by $\J T \mod T$ is the infinitesimal $S^1$ action.  This makes sense
because $\J T$ is a section of $T\cGt|_\cG$ homogeneous of degree $0$
and orthogonal to $T$ with respect to the tractor metric, so it projects to 
a vector field on $M$.  The fact that $\J$ is
determined by this projection is just the statement that $\chi$ is
determined by its projecting part $\alpha$, which we have seen in
\eqref{2form}.  

In Fefferman's original construction the CR manifold is a nondegenerate 
real hypersurface $\cM$ in $\C^n$, $n\geq 2$, in which case the circle
bundle $M=S^1\times  
\cM$ is trivial.  Fefferman showed in \cite{F} that there is a smooth
defining function $u$ for $\cM$ uniquely determined mod $O(u^{n+2})$ such
that $J(u)=1+O(u^{n+1})$, where  
$$
J(u)= (-1)^{r+1}\det
\begin{pmatrix}
u&u_{\bar{j}}\\
u_i&u_{i\bar{j}}
\end{pmatrix}_{1\leq i,j \leq n}.
$$
That $\cM$ is nondegenerate means that the Levi form
$-u_{i\bar{j}}|_{T^{1,0}\cM}$ is nondegenerate, and we have taken its
signature to be $(r,s)$, 
$r+s=n-1$.  Set $\C^*=\C\setminus \{0\}$.  A representative $g$ for the
conformal structure is by definition the pullback to    
$$
S^1\times
\cM = \{(z^0,z): |z^0|=1,\,z\in \cM\}\subset \C^*\times \C^n
$$ 
of the  K\"ahler metric $\gt$ defined on a neighborhood $\cGt$ of
$\C^*\times \cM$ in $\C^*\times \C^n$  by  
\begin{equation}\label{crambient}
\gt=\pa^2_{\al{\overline \be}}(-|z^0|^2u)dz^\al d{\overline z}^\be.
\end{equation}
The metric bundle $\cG$ can be identified with $\C^*\times \cM$ and the
dilations on $\cGt$ are just the usual dilations on the $\C^*$ factor.  
Now $\gt$ is clearly homogeneous of degree 2 and it satisfies the initial
condition $\iota^*\gt={\bf g}_0$ by the definition of $g$.  It is easily
checked that $\gt$ is straight.  Its Ricci curvature   
\begin{equation}\label{ricci}
\pa^2_{\al{\overline \be}}(\log |\det g_{\rho{\overline \sigma}}|)
dz^\al d{\overline z}^\be 
=\pa^2_{i{\overline j}}(\log J(u))dz^i d{\overline z}^j
\end{equation}
is clearly $O(u^{n-1})$ and is easily seen to be $O^+_{IJ} (\rho^{n-1})$.  
So $\gt$ is an ambient metric for $(S^1\times \cM, [g])$.  

Fix a defining function $u$ 
satisfying $J(u)=1+O(u^{n+1})$.  Theorem 2.11 of 
\cite{Gr2} shows that for each $a\in C^\nf(\cM)$, there is a $v$ uniquely 
determined to infinite order having an asymptotic expansion of the form  
\begin{equation}\label{vseries}
v\sim u\sum_{k=0}^\nf\eta_k(u^{n+1}\log|u|)^k  
\end{equation}
with each $\eta_k$ smooth, 
such that $\eta_0=1+au^{n+1} +O(u^{n+2})$ and $J(v)=1$ to infinite
order. 
For such $v$, $\gt$ defined by \eqref{crambient} with $u$ replaced by $v$
has $\Ric(\gt)=0$ to 
infinite order by \eqref{ricci}.  We will call such a $\gt$ an
infinite-order ambient metric  
with log terms for $(S^1\times \cM, [g])$.  These are parametrized by the
scalar ambiguity $a$.    
\begin{proposition}\label{crextension}
Let $\cM\subset \C^n$ be a nondegenerate hypersurface with associated
Fefferman conformal structure $(S^1\times \cM, [g])$ and parallel tractor
2-form $\chi\in \Gamma(\Lambda^2\cT^*)$.  For each infinite-order ambient 
metric $\gt$ with log terms parametrized by $a\in C^\infty(\cM)$ as above,
$\chi$ has a parallel ambient extension.    
\end{proposition}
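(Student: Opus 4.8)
The plan is to exploit the fact that each of these ambient metrics is Kähler, so that its Kähler form provides the parallel extension. For any value of the ambiguity $a$, the corresponding metric is \eqref{crambient} with $u$ replaced by the associated $v$ from \eqref{vseries}, namely $\gt = \pa^2_{\al\bar\be}(-|z^0|^2 v)\,dz^\al d\bar z^\be$. Being the complex Hessian of a real potential, $\gt$ is Kähler with respect to the standard complex structure $\Jt$ on $\C^*\times\C^n$, regarded as a real endomorphism of $T\cGt$; this holds wherever $\gt$ is nondegenerate, irrespective of whether $v$ carries log terms. Hence $\nt\Jt = 0$, and the Kähler form $\ot$ defined by $\ot_{IJ} = \gt_{IK}\Jt^K{}_J$ satisfies $\nt\ot = 0$. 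I would take $\ct := \ot$ as the candidate extension.

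Next I would verify that $\ot$ is an ambient extension of $\chi$. Since the dilations act by ordinary scaling on the $\C^*$ factor, they are holomorphic and commute with $\Jt$, so $\Jt$ is homogeneous of degree $0$; as $\gt$ is homogeneous of degree $2$, so is $\ot$, matching $r=2$. The essential step is the identification $\ot|_\cG = \chi$, for which I would use the characterization recalled above that the Fefferman $\J$ is the parallel tractor complex structure with $\J T \mod T$ equal to the infinitesimal $S^1$ action. Writing $z^0 = x^0 + iy^0$, the dilation generator is $T = x^0\pa_{x^0} + y^0\pa_{y^0}$, and $\Jt T = x^0\pa_{y^0} - y^0\pa_{x^0}$ is exactly the generator of rotations in the phase of $z^0$, i.e. the infinitesimal $S^1$ action on $M = S^1\times\cM$. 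Thus $\Jt$ restricts to $\J$ on $\cG$; lowering the upper index of $\Jt^K{}_J$ with $\gt$, which restricts to $h$, then gives $\ot_{IJ}|_\cG = h_{IK}\J^K{}_J = \chi_{IJ}$. Since this argument applies verbatim for every $a$, the conclusion follows.

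I expect the main point requiring care to be this identification $\ot|_\cG = \chi$, as it matches the ambient complex structure with the intrinsically defined tractor one; the computation $\Jt T = x^0\pa_{y^0}-y^0\pa_{x^0}$ is what makes it transparent. A secondary technical matter is that when $v$ contains log terms $\gt$ need not be smooth across $\cG$, so I would emphasize that $\nt\ot = 0$ is a pointwise consequence of the Kähler condition, valid on the open set where $\gt$ is nondegenerate, and that the log corrections enter at high enough order that $\gt$ remains nondegenerate in a neighborhood of $\cM$. Finally, it is worth noting that the existence of such extensions for infinitely many ambient metrics is consistent with Proposition~\ref{rank2null}: the projecting part $\alpha$ of the Fefferman tractor $2$-form is dual to the null $S^1$ generator, so $\chi$ is not determining and Proposition~\ref{determining} forces no uniqueness.
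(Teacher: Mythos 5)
Your proposal is correct and takes essentially the same route as the paper's own proof: both use the K\"ahlerity of each ambient metric $\gt$ to get $\nt\Jt=0$, take $\ct_{IJ}=\gt_{IK}\Jt^K{}_J$ as the extension, and identify $\ct|_\cG$ with $\chi$ via the characterization of the Fefferman tractor complex structure by the condition that $\J T \bmod T$ is the infinitesimal $S^1$ action --- your Cartesian computation $\Jt T = x^0\pa_{y^0}-y^0\pa_{x^0}$ is exactly the paper's polar-coordinate statement $\J(r\pa_r)=\pa_\theta$. Your closing remarks on homogeneity, log terms, and consistency with Proposition~\ref{rank2null} also match the paper's surrounding discussion.
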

\begin{proof}
Let $\Jt$ denote the almost complex structure on $\C^*\times \C^n$.  If
$\gt$ is any one of the infinite-order ambient 
metrics with log terms, then $\gt_{IK}\Jt^K{}_J$ is skew and $\nt\Jt=0$ 
since $\gt$ is K\"ahler.  
Now $\Jt$ is invariant under the dilations, so   
$\Jt|_\cG$ defines an almost complex structure on $\cT$ (recall that 
$\cG$ is 
identified with $\C^*\times\cM$) which we denote by $\J$.  We claim that 
$\J$ is the parallel almost complex structure of \cite{Leit3}, \cite{CG2}.   
Restricting the properties for $\Jt$ shows that $h_{IK}\J^K{}_J$ is skew
and $\J$ is parallel.  If we write $z^0=re^{i\theta}$ in polar     
coordinates, then $\J(r\pa_r)=\pa_\theta$.  But $r\pa_r=T$ is the 
infinitesimal dilation and $\pa_\theta$ the conformal Killing field giving  
the infinitesimal $S^1$ action, so this is the required condition on $\J$. 
It follows that $\J$ is the almost complex structure of \cite{Leit3},
\cite{CG2}.  
Lowering an index, we conclude that 
$\ct_{IJ}=\gt_{IK}\Jt^K{}_J$ is a parallel extension of 
$\chi_{IJ}=h_{IK}\J^K{}_J$.  
\end{proof}

Note that the Einstein condition is not used in the proof of  
Proposition~\ref{crextension}; what matters is that $\gt$ is K\"ahler.  In
particular, Proposition~\ref{crextension} holds for all metrics of the form
\eqref{crambient} so long as $J(u)=1+O(u^2)$. 
But    
if $\cM$, $u$ and $a$ are real-analytic, then the series \eqref{vseries}  
converges (\cite{Ki}) and thus defines a real-analytic function off $\cM$.
The 
corresponding metric $\gt$ is then Ricci-flat and K\"ahler.  The
$(n+1,0)$-form $(z^0)^ndz^0\wedge dz^1\wedge \cdots \wedge dz^n$ is
parallel, so 
$\Hol(\gt)\subset SU(r+1,s+1)$.  For any choice of   
real-analytic $a$, the almost complex structure and the complex volume form 
have thus been simultaneously extended to be parallel.  In this regard it
is interesting 
to recall that a simply connected conformal structure with conformal
holonomy contained in $U(p,q)$ 
must have holonomy contained in $SU(p,q)$ (\cite{Leit3}), but 
this is not the case for metric holonomy.  Thus for conformal structures
the existence of a (local) parallel complex volume form follows from the
existence of a parallel $\J$, but the existence of a parallel extension of
$\J$ does not imply the existence of a parallel extension of the complex
volume form.  

Observe the similarity between the situation in 
Proposition~\ref{crextension} and that for the trivial example of 
extending the tractor metric.  The extension $\ct_{IJ}$ is the 
K\"ahler form of $\gt$ so depends on the ambient metric which has been
chosen. But the extension $\ct^I{}_J=\Jt^I{}_J$ with an index raised is
independent of this choice.  Likewise the extended parallel complex volume  
form $(z^0)^ndz^0\wedge dz^1\wedge \cdots \wedge dz^n$ is independent of
choice of $\gt$.   

There is a scalar CR invariant $L$ defined to be a constant multiple of  
$(J(u)-1)/u^{n+1}|_\cM$ which is 
independent of the choice of $u$ satisfying $J(u)=1+O(u^{n+1})$.  
The main properties of $L$ are derived in \cite{Lee}, \cite{Gr1},
\cite{Gr2}.  Proposition 3.10 of \cite{GH} shows that the obstruction
tensor of $(S^1\times \cM, [g])$ is a constant multiple of $L\theta^2$,
where $\theta = \frac{i}{2}(\pa u-\bar{\pa} u )|_{T\cM}$ is the associated
pseudohermitian 1-form and the tensor $L\theta^2$ on $\cM$ is implicitly
pulled back to $S^1\times \cM$.  So $[g]$ is   
obstruction-flat if and only if $L=0$.  In this case Proposition 2.16 of 
\cite{Gr2} shows that for all choices of $a\in C^\nf(\cM)$ the coefficients
$\eta_k$ for $k\geq 1$ vanish to infinite order in the expansion of $v$
above.  The corresponding $\gt$ are therefore  
smooth and are infinite-order ambient metrics in our usual sense.  
Hence Proposition~\ref{crextension} asserts in particular that the parallel 
tractor 2-form $\chi$ for an obstruction-flat Fefferman conformal structure
of a nondegenerate hypersurface in $\C^n$ has a parallel ambient extension
for a family of infinite-order ambient metrics (without log terms)
parametrized by $a\in 
C^\nf(\cM)$.  Likewise, the discussion in the paragraph following 
Proposition~\ref{crextension} shows that if $\cM$ is obstruction-flat and
real-analytic and 
$a$ is chosen to be real-analytic, then the resulting metrics $\gt$ are
real-analytic everywhere and have holonomy contained in $SU(r+1,s+1)$.

\section{$G_2$ Holonomy}\label{G2holonomy}

In this section we apply Theorem~\ref{main} to Nurowski's conformal
structures associated to a generic 2-plane field on a 5-manifold.  Let 
$\cD$ be a distribution of rank 2 on a manifold $M$ of dimension 5.  We
assume that $M$ is connected and, for convenience, oriented (which is
equivalent to the condition that $\cD$ is oriented).  (Our arguments and
results all 
carry over to the nonorientable case upon replacing below $G_2$ by 
$\{\pm I\} G_2\subset O(3,4)$ and $P$ by 
$\{A\in \{\pm I\} G_2:Ae_0=\lambda e_0, \lambda>0\}$.)    
Let $X$, $Y$ be a local basis for the sections of $\cD$.  $\cD$ is
said to be generic if    
$X$, $Y$, $[X,Y]$, $[X,[X,Y]]$, $[Y,[X,Y]]$ are linearly independent at
each point.  In this case we denote by
$\cD^1=\operatorname{span}\{X,Y,[X,Y]\}$ the derived rank 3 distribution.     
In \cite{C}, Cartan solved the equivalence problem for such
distributions by canonically associating to $\cD$ a principal bundle 
$\cB\rightarrow M$ and 
Cartan connection $\omega$ taking values in the Lie algebra $\fg_2$ of
$G_2$, the split real form of the exceptional group.  The structure   
group of $\cB$ is the parabolic subgroup $P\subset G_2$ fixing a null ray.
In the appendix we fix our conventions,  
review the Cartan connection and derive two properties which will be 
needed in the sequel.  

In \cite{N1}, Nurowski showed that there is a conformal structure on $M$ of
signature $(2,3)$ 
naturally determined by $\cD$.  This follows from the existence 
of Cartan's canonical bundle and connection and the relation between the
relevant groups.  One way to see this is as follows.  It is a general   
fact that the tangent bundle $TM$ of a manifold $M$ with a Cartan geometry
of type $(\fg,P)$ is associated to the adjoint representation of $P$ on
$\fg/\fp$.  We claim that the adjoint
action of our $P$ on $\fg_2/\fp\cong \R^5$ preserves up to scale a
quadratic form of signature $(2,3)$.  This implies the existence
of a canonical conformal structure by the associated bundle construction.
Now $G_2$ is a subgroup of $SO(3,4)$.  Let $P_c$ be the subgroup of  
$SO(3,4)$ fixing the same null ray, with Lie algebra $\fp_c$.  Then
$P\subset P_c$ and $\fs\fo(3,4)/\fp_c\cong \fg_2/\fp$.  The adjoint
action of $P_c$ on $\fs\fo(3,4)/\fp_c$ preserves up to scale a
quadratic form of signature $(2,3)$; this is the reason a Cartan geometry
of type $(\fs\fo(3,4),P_c)$ induces a conformal structure.  Since the  
adjoint action of $P$ on $\fg_2/\fp$ can be regarded as the
restriction of the adjoint action of $P_c$, it preserves the same quadratic
form up to scale.  

As discussed in the introduction, any generic $\cD$ can be written locally
in the form \eqref{Dform} for some smooth function $F$ such that
$F_{qq}$ is nonvanishing, and $\cD$ defined by \eqref{Dform} is
generic for any such $F$.  The choice $F=q^2$ gives the homogeneous model
distribution.  For $\cD$ in this form, Nurowski gave a  
formula in \cite{N1}, \cite{N2} for a representative metric $g_F$
of the conformal class 
such that the components of $g_F$ and $g_F^{-1}$ are polynomials in $F$,
the derivatives of $F$ of orders $\leq 4$, and $F_{qq}^{-1}$, with 
coefficients which are universal functions of the local coordinates.  The
metric $g_F$ is flat for $F=q^2$.  
In \cite{N2}, he considered the case $F=q^2+\sum_{k=0}^6a_kp^k + bz$ with
$a_k$, $b\in \R$, 
and gave an explicit formula for the ambient metric $\gt_F$ in normal form
relative to $g_F$.  For these $g_F$, $g_\rho$ in \eqref{normalform} is a
polynomial in $\rho$ of degree $\leq 2$.  In \cite{LN}, Leistner-Nurowski
showed that the holonomy of $\gt_F$ is contained in $G_2$ for all values of
the $a_k$'s and $b$, and is equal to $G_2$ if one of $a_3$, $a_4$, $a_5$ 
or $a_6$ is nonzero.  That the holonomy is contained in $G_2$ is
proved by exhibiting explicitly a suitable parallel 3-form (or equivalently
a nonisotropic parallel spinor) for $\gt_F$.  

In \cite{HS}, Hammerl-Sagerschnig characterized those conformal structures
of signature $(2,3)$ which arise from a generic distribution $\cD$ by the
existence of a parallel tractor 3-form $\chi$ compatible with 
the tractor metric $h$ in the sense that
$$
(U\into \chi)\wedge (V\into \chi)\wedge \chi = \lambda\, h(U,V)\, dv,\qquad  
U,V\in \cT_x,
$$
where $dv$ denotes the tractor volume form and $\lambda$ is some positive
constant.  As explained in \cite{HS}, the existence of such a  
$\chi$, which is all that we need here,  
follows easily from the realization of $\Lambda^3\cT^*$ and its connection as
associated to the $(\fg_2,P)$ Cartan connection.  Namely, since the 3-form
$\varphi\in\Lambda^3\R^7{}^*$ defining $G_2$ is fixed by $P$, the constant
function $\varphi$ on $\cB$ is $P$-equivariant, so determines a
section $\chi$ of the associated bundle $\Lambda^3\cT^*$.  Since 
$\varphi$ is fixed by all of $G_2$ and the associated covariant derivative
is given in terms of the action of $\fg_2$ and differentiation by vector
fields on $\cB$, it follows that $\chi$ is parallel.  The compatibility
condition follows from the fact that $h$ has a similar realization 
as associated to the quadratic form determined by $\varphi$.    

Let $\cD$ be a generic 2-plane field on $M$, with $\cD$ and $M$
real-analytic.  In the following, $\gt$ will denote a  
real-analytic ambient metric for Nurowski's conformal structure associated
to $\cD$, with domain 
a sufficiently small dilation-invariant neighborhood $\cGt$ of $\cG$
diffeomorphic to $\R_+\times  M\times \R$.

\bigskip
\noindent
{\it Proof of Theorem~\ref{holonomycontained}.}
By the result of Hammerl-Sagerschnig, there is a nonzero
parallel section $\chi\in\Gamma(\Lambda^3\cT^*)$ compatible with the
tractor metric.  By Theorem~\ref{main}, $\chi$ has an extension as a
real-analytic 3-form 
$\ct$ on some dilation-invariant neighborhood $\cGt$ of $\cG$ which is
parallel with respect to $\gt$.  $\ct$ is compatible with $\gt$ by analytic
continuation, so it follows that $\Hol(\cGt,\gt)\subset G_2$.
\stopthm 

Next we discuss Cartan's basic curvature invariant of generic 2-plane
fields.  In Cartan's derivation this invariant arises directly from 
the structure equations.  We describe how it can be realized as a piece of
the Weyl tensor of the conformal structure.

Let $\cD$ be a generic 2-plane field on a (connected, oriented) 5-manifold
$M$ and let 
$\cD^1=\operatorname{span}(\cD,[\cD,\cD])$ be the derived 3-plane  
distribution.  We first define an isomorphism $\tau:\cD\rightarrow
TM/\cD^1$ invariantly up    
to scale.  Choose a metric $g$ in the conformal class.   
In the appendix it is observed that $\cD^1=\cD^\perp$, where ${}^\perp$
denotes orthogonal complement with respect to $g$.  Thus the map 
$\psi:TM/\cD^1\rightarrow \cD^*$ obtained 
by lowering an index with respect to $g$ and restricting to $\cD$ is 
well-defined and an isomorphism.  Now 
$g$ defines a negative definite metric on the line bundle $\cD^1/\cD$, and
this line bundle is trivial since $\cD$ is orientable.  
Let $\alpha$ be a section of $(\cD^1/\cD)^*$ with $g$-length-squared equal to
$-\frac34$; $\alpha$ is uniquely determined up to multiplication by $-1$.  
The Lie bracket induces a pointwise map $\cD\times \cD\rightarrow \cD^1/\cD$  
which we can regard as a scalar-valued nondegenerate skew form 
$\cD\times \cD\rightarrow \R$ by $(X,Y)\rightarrow \alpha([X,Y])$.  
This form induces an identification 
$\mu:\cD\rightarrow \cD^*$ defined by $\langle\mu(X),Y\rangle = \alpha([X,Y])$,
where $\langle\cdot,\cdot\rangle$ denotes the duality pairing.  Then 
$\tau=\psi^{-1}\circ \mu:\cD\rightarrow TM/\cD^1$ is our desired isomorphism.
Clearly $\langle\mu(X),X\rangle=0$ for $X\in \cD$.  Since $\psi$ turns the
$g$ pairing into the duality pairing, it follows that $\tau(X)\in
X^\perp/\cD^1$.  Since $X^\perp/\cD^1$ is 1-dimensional, $\tau(X)$ actually
spans $X^\perp/\cD^1$ if $X\neq 0$.
Under rescaling $\gh=\Omega^2 g$, we have $\widehat{\psi}=\Omega^2 \psi$ 
and $\widehat{\mu}=\Omega \mu$ so that $\widehat{\tau}=\Omega^{-1} \tau$.
Changing the sign of $\alpha$ multiplies $\mu$ and $\tau$ by $-1$.  

Let $W$ denote the Weyl tensor of our chosen metric $g$, viewed as a
covariant 4-tensor.  Proposition~\ref{Weylvanish} shows that 
$W(\cdot,\cdot,Y,Z)=0$ if $Y$, $Z\in \cD^1$.  Therefore we can
define a section $A$ of $\otimes^4\cD^*$ by 
\begin{equation}\label{Adef}
A(X_1,X_2,X_3,X_4)=W(\tau(X_1),X_2,\tau(X_3),X_4),\qquad X_i\in \cD_y
\end{equation}
since the right-hand side is independent of the $\cD^1$ ambiguity in $\tau$.  
Since  
$\widehat{W}=\Omega^2W$ and $\widehat{\tau}=\Omega^{-1}\tau$ and the right
hand side is invariant under $\tau\rightarrow -\tau$, it follows 
that $A$ is an absolute invariant of the 2-plane distribution $\cD$.   
Proposition~\ref{Asym} shows that $A\in \Gamma(S^4\cD^*)$ is symmetric.

The proof of Theorem~\ref{holonomycriterion} uses the following result
which follows from the arguments of Leistner-Nurowski.   

\begin{proposition}\label{LN}
Let $\cD$ be a real-analytic generic 2-plane field on a connected,
simply connected, real-analytic 5-manifold $M$.  
Suppose $\Hol(\gt)$ is strictly contained in $G_2$.  Then at 
least one of the following three conditions holds:  
\begin{enumerate}
\item
$\gt$ is locally symmetric.
\item
There is an open dense set $\cU\subset M$    
such that every point of $\cU$ has a neighborhood on which 
there is an Einstein metric in the conformal class $[g]$.  
\item
Every point of $M$ has a neighborhood on which there is a metric $g$
in the conformal class and a null line 
bundle $L\subset TM$ such that $L$ is parallel for $g$ and 
$Z\into \Ric_g=0$ for all $Z\in L^\perp$.  In this case, 
$W(U,K,K,Z)=0$ for all $K\in L$, $U\in TM$, $Z\in L^\perp$.  
\end{enumerate}
\end{proposition}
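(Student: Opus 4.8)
The plan is to combine the containment $\Hol(\gt)\subset G_2$ from Theorem~\ref{holonomycontained} with the fact that $\gt$ is real-analytic and Ricci-flat, and then to classify which proper connected subgroups of $G_2$ can occur as the holonomy of such a metric. Since $M$ is simply connected, $\cGt\cong \R_+\times M\times\R$ is simply connected, so $H:=\Hol(\gt)$ is connected; the hypothesis gives $H\subsetneq G_2$. Because the tractor connection is realized as ambient parallel transport in directions tangent to $\cG$ (\S\ref{ambtrac}) and, by Theorem~\ref{main}, parallel tractors extend to parallel ambient tensors, the $H$-invariant subspaces of the $7$-dimensional fibre $\R^{3,4}$ correspond to parallel tractor subbundles of $\cT$. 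I would then run the standard pseudo-Riemannian trichotomy for the action of $H$ on $\R^{3,4}$ --- irreducible, decomposable, or weakly irreducible but not irreducible --- and show, following Leistner-Nurowski, that each case produces one of the alternatives (1)--(3). Throughout, the subalgebra structure of $\fg_2$ is used to match the abstract trichotomy to these three geometric outcomes.

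In the irreducible case, $H$ acts irreducibly on $\R^{3,4}$ with $H\subsetneq G_2\subsetneq SO(3,4)$. Applying the Berger-Simons theorem to the Ricci-flat metric $\gt$, either $\gt$ is locally symmetric, giving (1), or $H$ is a proper irreducible Berger holonomy group inside $G_2$. I would rule out the latter via the classification of subalgebras of $\fg_2$: no proper irreducible subgroup occurs as the holonomy of a Ricci-flat metric that is not locally symmetric. Hence the irreducible case yields (1).

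In the reducible cases, $H$ preserves a proper subspace. If there is a nondegenerate invariant line, then $H$, being connected and orthogonal, fixes a generator, producing a nonzero parallel tractor $1$-form $\chi$ with $h(\chi,\chi)\neq0$; by the discussion of parallel tractor $1$-forms in \S\ref{neven} this defines an almost Einstein scale $\sigma$, and $\sigma^{-2}g$ is Einstein on the dense open set $\{\sigma\neq0\}$, giving (2). (In the decomposable case, Wu's theorem first produces a local product of Ricci-flat factors; analyzing the compatible reductions reduces to either a nondegenerate invariant line, hence (2), or to local symmetry, hence (1).) If instead $H$ is weakly irreducible but not irreducible, it preserves a totally null subspace, hence a null line in $\R^{3,4}$, i.e. a parallel null tractor line. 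Translating this conformal holonomy reduction into metric data --- the content of the Leistner-Nurowski argument --- yields near each point a representative $g\in[g]$ and a parallel null line bundle $L\subset TM$ with $Z\into\Ric_g=0$ for $Z\in L^\perp$; expanding $W$ in a Walker frame adapted to $L$ and using the Ricci condition together with $\Ric(\gt)=0$ then gives $W(U,K,K,Z)=0$, which is (3).

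The step I expect to be the main obstacle is the weakly-irreducible case leading to (3): passing from the abstract existence of a holonomy-invariant null line to the concrete metric statement --- a parallel null direction in $TM$ for a conformal representative together with the isotropy condition on $\Ric_g$ --- is the technical heart of the Leistner-Nurowski analysis, and it is where Ricci-flatness, the cone/Euler structure $\nt T=\mathrm{Id}$ of the ambient metric, and the parabolic geometry underlying $\cD$ all interact. A secondary subtlety is the decomposable subcase, where one must verify that a nontrivial Wu product compatible with $\Hol(\gt)\subset G_2$ forces either an Einstein representative or local symmetry rather than some further possibility.
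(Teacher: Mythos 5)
Your overall skeleton (Berger--Simons in the irreducible case, then a case analysis of holonomy-invariant subspaces) is the same as the paper's, and your irreducible case is essentially correct: Berger's list in signature $(3,4)$ contains only $SO(3,4)$ and $G_2$ among irreducibly acting non-locally-symmetric holonomy groups, so irreducibility forces alternative (1). The genuine gap is in your treatment of the degenerate (weakly irreducible) case. You assert that if $H=\Hol(\gt)$ is weakly irreducible but not irreducible then it ``preserves a totally null subspace, hence a null line in $\R^{3,4}$, i.e.\ a parallel null tractor line.'' The ``hence'' is false: if $V$ is invariant then $V\cap V^\perp$ is an invariant totally null subspace of dimension $1$, $2$ or $3$, and when the dimension is $2$ or $3$ there need be no invariant null line at all, since $H$ can act on the totally null plane without preserving any direction inside it. Your argument therefore silently collapses three distinct cases into one.

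Moreover, the outcomes you attach to these cases are mismatched. An invariant null \emph{line} ($\dim(V\cap V^\perp)=1$) corresponds in the tractor picture to an almost Einstein scale with $\lambda=0$, i.e.\ a Ricci-flat (hence Einstein) representative of the conformal class on an open dense set: this is alternative (2), not (3). The paper's outline, following \cite{LN}, sorts the degenerate cases by $\dim(V\cap V^\perp)$: dimensions $1$ and $3$ lead to condition (2), and only dimension $2$ leads to condition (3). In that last case the parallel object on the ambient space is a totally null $2$-plane distribution, and the null line bundle $L\subset TM$ of condition (3), together with $\nabla L\subset L$, $Z\into\Ric_g=0$ for $Z\in L^\perp$, and the Weyl identity, is extracted from that $2$-plane and its interaction with the cone structure $\nt T=\mathrm{Id}$ --- not from a null tractor line; this extraction is one of the ``substantial theorems'' of \cite{LN} which the paper quotes rather than reproves. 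As written, your proof would route the $\dim=1$ and $\dim=3$ situations into (3) by a step that does not go through, and it misidentifies the mechanism that actually produces (3) in the $\dim=2$ case. (Your decomposable case is also vaguer than it should be --- a direct sum $V\oplus V^\perp$ of nondegenerate invariant factors does not by itself yield an invariant line --- but since you defer to the de Rham/Wu splitting and reach (1) or (2), that part is repairable; the weakly irreducible case is not, without restructuring along the lines above.)
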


\noindent
We give a brief outline of the proof.  See \cite{LN} for 
details.  Berger's list contains all 
irreducibly acting holonomy groups of simply connected
non-locally-symmetric 
pseudo-Riemannian manifolds.  The only groups on the list in signature
$(3,4)$ are $G_2$ and $SO(3,4)$.  So if the holonomy is strictly contained
in $G_2$ and 
$\gt$ is not locally symmetric, its holonomy must act reducibly.  If
$V\subset \R^7$ is a nontrivial invariant subspace for the holonomy group,
its orthogonal $V^\perp$ is also invariant.  If $V\cap V^\perp=\{0\}$, 
the deRham decomposition theorem gives a local splitting of 
$\gt$ as a product metric.  This leads to condition (2).  If $V$ and
$V^\perp$ intersect nontrivially,  
their intersection determines a parallel totally null distribution of rank
at most 3 on the ambient space.  The cases where $\dim{(V\cap V^\perp)}=$ 1 
or 3 lead to 
condition (2).  The case where $\dim{(V\cap V^\perp)}=2$ leads 
to condition (3).  (These deductions to conditions (2) and (3) are    
substantial theorems in themselves.) 

As regards condition (3) of Proposition~\ref{LN}, we have the following
lemma. 
\begin{lemma}\label{WA}
Let $\cD$ be a generic 2-plane field on a 5-manifold $M$ and let $W$ 
be the Weyl 
tensor at $y\in M$ of a representative of Nurowski's associated conformal
structure.  There exists a nonzero null vector $K\in T_yM$ such that 
$W(U,K,K,Z)=0$ for all $U\in T_yM$, $Z\in K^\perp$ if and only if  
$A_y$ is 3-degenerate.
\end{lemma}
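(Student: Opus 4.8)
The plan is to reformulate the pointwise condition on $K$ as a rank-one constraint on a symmetric bilinear form built from $W$, and then to feed that constraint into the definition \eqref{Adef} of $A$; the nondegeneracy of the skew form underlying $\tau$ will then force a triple root automatically. The first point is that the Weyl symmetries give $W(U,K,K,Z)=W(Z,K,K,U)$, so $S_K(U,Z):=W(U,K,K,Z)$ is a \emph{symmetric} bilinear form on $T_yM$. For a null $K$, the stated condition says precisely that $S_K$ vanishes whenever $Z\in K^\perp$, i.e.\ $\operatorname{rad}S_K\supseteq K^\perp$; since $K^\perp$ has codimension one and is the kernel of $g(K,\cdot)$, this is equivalent to the existence of a scalar $c$ with
\[
W(U,K,K,Z)=c\,g(K,U)\,g(K,Z),\qquad U,Z\in T_yM.
\]
The problem is thereby made algebraic in $W_y$, and I would work in the filtration $\cD\subset\cD^1\subset T_yM$, using from the appendix that $\cD^1=\cD^\perp$ (so $\cD$ is totally null of rank $2$ and $g$ is definite on the line $\cD^1/\cD$) together with Proposition~\ref{Weylvanish}, which annihilates any component of $W$ having one of its two skew pairs contained in $\cD^1$.

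Next I would split $K=K_1+K_T$ with $K_1\in\cD^1$ and $K_T$ transverse, and evaluate the displayed identity on $U=X_1$, $Z=X_2$ with $X_1,X_2\in\cD$. Expanding the two middle slots by bilinearity, Proposition~\ref{Weylvanish} annihilates every term except $W(X_1,K_T,K_T,X_2)$ (each discarded term has a skew pair lying in $\cD^1$), while $g(K,X_i)=g(K_T,X_i)$ since $K_1\perp\cD$. Writing $\bar K_T=\tau(X_0)$ for the unique $X_0\in\cD$ and converting the index pattern by one transposition in the first skew pair gives $W(X_1,K_T,K_T,X_2)=-A(X_0,X_1,X_0,X_2)$, whereas $g(K_T,X_i)=\langle\psi(\tau X_0),X_i\rangle=\alpha([X_0,X_i])=:\beta(X_i)$. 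Hence
\[
A(X_0,X_1,X_0,X_2)=-c\,\beta(X_1)\,\beta(X_2),\qquad X_1,X_2\in\cD.
\]
Because $\alpha([\cdot,\cdot])$ is a nondegenerate skew form on the $2$-plane $\cD$, the functional $\beta=\alpha([X_0,\cdot])$ has kernel exactly $\langle X_0\rangle$; setting $X_1=X_0$ makes the right-hand side vanish and yields $A(X_0,X_0,X_0,X_2)=0$ for all $X_2$. By the symmetry of $A$ (Proposition~\ref{Asym}) this reads $A(\cdot,X_0,X_0,X_0)=0$, so $X_0\neq0$ witnesses $3$-degeneracy — provided $K_T\neq0$.

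It remains to treat the case $K_T=0$ and the converse, which I would handle together through the same mechanism. A null vector of $\cD^1$ lies in $\cD$ because $g$ is definite on $\cD^1/\cD$; so assume $K=X\in\cD$. Then $K^\perp=\cD^1\oplus\langle w\rangle$ with $w$ transverse and $g(w,X)=0$, and Proposition~\ref{Weylvanish} already forces $\operatorname{rad}S_X\supseteq\cD^1$, so the condition collapses to $W(f,X,X,w)=0$ for transverse $f$. Converting as above, and using that $g(w,X)=0$ forces the $\cD$-vector representing $\bar w$ to be a nonzero multiple of $X$ (again by nondegeneracy of the skew form), this becomes $A(Y,X,X,X)=0$ for all $Y\in\cD$. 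This both settles the case $K_T=0$ of the forward implication and, read in reverse, proves the converse: if $A_y$ is $3$-degenerate with witness $X$, then $K=X\in\cD$ is a null vector with the required property.

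The main obstacle I anticipate is organizational rather than computational: one must check that the reformulation $\operatorname{rad}S_K\supseteq K^\perp$ captures the hypothesis faithfully, and that the two cases (whether or not the null $K$ is transverse to $\cD^1$) both funnel into the single identity equating an $A$-contraction with a rank-one form $\beta\otimes\beta$. The two facts that make everything collapse are the strong vanishing $W(\cdot,\cdot,\cD^1,\cD^1)=0$ of Proposition~\ref{Weylvanish} and the nondegeneracy of $\alpha([\cdot,\cdot])$ on $\cD$, the latter being exactly what turns $\beta(X_0)=0$ into the triple-root statement. The repeated passages between $W$ index patterns and $A$ via the Weyl symmetries are routine but must be carried out with care for signs.
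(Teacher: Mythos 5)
Your proposal is correct and follows essentially the same route as the paper's proof: both reduce the Weyl tensor modulo $\cD^1$ via Proposition~\ref{Weylvanish}, convert to the tensor $A$ through the isomorphism $\tau$, split into the cases $K\in\cD$ and $K\notin\cD^1$ (using that $\cD^1\setminus\cD$ contains no null vectors), and finish with the symmetry of $A$ from Proposition~\ref{Asym}. Your rank-one reformulation $W(U,K,K,Z)=c\,g(K,U)\,g(K,Z)$ is a tidy organizational wrapper, but the decisive step in it---the vanishing $\beta(X_0)=\alpha([X_0,X_0])=0$, i.e.\ $X_0\in K^\perp$---is exactly the paper's direct choice $Z=X$ in the hypothesis.
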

\begin{proof}
Suppose first that $A_y$ is 3-degenerate.  So
there exists $0\neq X\in \cD$ such that $A(Y,X,X,X)=0$ for all $Y\in \cD$.  
({From} now on we suppress writing ${}_y$.)   Just take $K=X$.
Certainly 
$X$ is nonzero and null.  In order to check $W(U,X,X,Z)=0$, by 
Proposition~\ref{Weylvanish} only the
equivalence classes of $U$, $Z \mod \cD^1$ are relevant.  We can write
$U+ \cD^1=\tau(Y)$ for some $Y\in \cD$, and modulo a multiplicative constant
can write $Z+ \cD^1=\tau(X)$.  Then
$W(U,X,X,Z)=W(\tau(Y),X,X,\tau(X))=-A(Y,X,X,X)=0$.      

For the converse, suppose that $K$ is nonzero and null and 
$W(U,K,K,Z)=0$ for all $U\in TM$, $Z\in K^\perp$.  First suppose $K\in \cD$.
In this case we take $X=K$, $Z+ \cD^1=\tau(X)$, $U+ \cD^1=\tau(Y)$ as 
above to deduce that $A(Y,X,X,X)=0$ for all $Y\in \cD$.  There are no 
null vectors in $\cD^1\setminus \cD$, so the only other possibility is $K\in
TM\setminus \cD^1$.  
In this case we can write $K+ \cD^1=\tau(X)$ for some nonzero $X\in \cD$.    
Take $Z=X$ and $U$ to be an arbitrary vector $Y\in \cD$ to deduce that 
$A(Y,X,X,X)=0$ for all $Y\in \cD$.  
\end{proof}

\noindent
{\it Proof of Theorem~\ref{holonomycriterion}.} 
Theorem~\ref{holonomycontained} shows that $\Hol(\gt)\subset G_2$.  So  
it suffices to show that the restriction of $\gt$ to some connected  
open subset of its domain has holonomy equal to $G_2$.  
We can choose a connected, simply connected subset of $M$
containing $x$ and $y$.  By replacing $M$ by this subset, we may as  
well assume that $M$ is simply connected.  Thus we can apply
Proposition~\ref{LN}.   
We show that injectivity of $L_x$ and 3-nondegeneracy of $A_y$ is
incompatible with each of conditions (1)-(3) of  
Proposition~\ref{LN}.  Note that $L$ is injective on an open set about $x$
since injectivity is invariant under perturbation.  

At a point where $A$ is 3-nondegenerate, it is in particular nonzero, so
the Weyl tensor $W$ of a representative metric $g$ is nonzero.  This is
enough to 
conclude that $\nt\Rt\neq 0$ so that $\gt$ is not locally symmetric.   
In fact, the second equation of \eqref{curv} with $r=1$ implies
$\nt_T\Rt=-2\Rt$, and one has $\Rt_{ijkl}=t^2W_{ijkl}$.  So if $W\neq 0$,  
then $\Rt\neq 0$, so $\nt\Rt\neq 0$.  

An Einstein metric has vanishing Cotton tensor.  By the transformation law
of the Cotton tensor, it follows that if $\gh=e^{2\omega}g$ is 
Einstein, then $W_{ijkl}\omega^i+C_{jkl}=0$.  Hence if (2) holds, then
$L$ is not injective on $\cU$.  Since $\cU$ is
dense, this is incompatible with injectivity of $L$ on an open set.     

If (3) holds, then Lemma~\ref{WA} shows that $A$ is 3-degenerate at all
points of $M$.  This violates the assumption that $A_y$ is
3-nondegenerate. 
\stopthm

\bigskip
\noindent
{\it Proof of Proposition~\ref{Fhol}.}
Represent $L$ at the origin as a matrix in some basis.  
Its rank 
is less than 6 if and only if the determinant of each of its $6\times 6$  
submatrices vanishes.  This clearly defines an algebraic subvariety in the
space of jets; we have only to show that it is proper.  
In the appendix of \cite{LN}, Leistner-Nurowski give explicit formulae for
the Weyl and Cotton tensors 
for the 8-parameter family $F=q^2+\sum_{k=0}^6a_kp^k + bz$.  It is
straightforward to check from their formulae that if $a_3\neq 0$ and
$a_4\neq 0$, then $L$ at the origin is injective.  (For instance, suppose
that the right 
hand side of our \eqref{Lform} vanishes for some $(v,\lambda)$.  Taking 
successively $jkl=415, 115, 413, 113, 414, 114$ shows the vanishing of, 
resp., $v^1$, $v^4$, $\lambda$, $v^3$, $v^2$, $v^5$.)  So
the subvariety where $\operatorname{rank}(L)<6$ is proper.  

Now $A$ at the origin is a symmetric 4-form on 
$\cD_0=\operatorname{span}\{\pa_q, \pa_x\}$.  If we represent $X\in \cD_0$ in 
terms of the coordinates $(u, v)$ 
dual to this basis, then the homogeneous polynomial defined by
$A$ takes the form
$$
A(X,X,X,X)=A_0u^4+4A_1u^3v+6A_2u^2v^2+4A_3uv^3+A_4v^4   
$$
for $A_0,\ldots, A_4\in \R$ (cf. \eqref{Aform}).  The conditions  
$A(\pa_q,X,X,X)=0$, $A(\pa_x,X,X,X)=0$ are 
\begin{gather}
\begin{gathered}
A_0u^3+3A_1u^2v+3A_2uv^2+A_3v^3=0\\
A_1u^3+3A_2u^2v+3A_3uv^2+A_4v^3=0.
\end{gathered}
\end{gather}
So $A$ is 3-degenerate if and only if this pair of equations has a common 
solution $(u,v)\in \R^2\setminus (0,0)$.  The set of $A_0,\ldots, A_4$ such
that this holds is contained in the set where the two equations have a
common solution in $\C^2\setminus (0,0)$, which is characterized by 
the vanishing of the resultant:
\begin{equation}\label{resultant}
\left|
\begin{matrix}
A_0&3A_1&3A_2&A_3&0&0\\
0&A_0&3A_1&3A_2&A_3&0\\
0&0&A_0&3A_1&3A_2&A_3\\
A_1&3A_2&3A_3&A_4&0&0\\
0&A_1&3A_2&3A_3&A_4&0\\
0&0&A_1&3A_2&3A_3&A_4
\end{matrix}
\right|=0.
\end{equation}
This equation defines an algebraic subvariety in the space of $7$-jets of
$F$ at the 
origin, and we must show it is proper.  We cannot do this by considering
Leistner-Nurowski's family of examples, because inspection of the formulae
in the appendix of \cite{LN} shows that the 
$A$ which arise from their family are everywhere 2-degenerate, i.e. for
any $F$ in their 8-parameter family, at each 
point there is $0\neq X\in \cD$ such that $A(Y_1,Y_2,X,X)=0$ for all
$Y_1$, $Y_2\in \cD$.  Instead we argue as follows. 

It is easily seen that \eqref{resultant} defines a proper subvariety in
$S^4(\cD_0^*)$ represented as the space of $A$'s.  To see  
that it defines a proper subvariety in the space of $7$-jets of $F$, it 
certainly suffices to show that the map from jets of $F$ at the origin to
$S^4(\cD_0^*)$ is surjective.  Take $F=q^2 + f$, where  
$f=f(x,y,z,p,q)=O(|(x,y,z,p,q)|^6)$.  For such $f$, we claim that 
$A$ can be identified with a nonzero constant multiple of 
$\left(\nabla^4\pa_q^2f\right)(0)|_{\cD_0}$.  Here $\pa_q^2f$ vanishes to
order 
4 at the origin, $\left(\nabla^4\pa_q^2f\right)(0)$ denotes the
symmetric 4-tensor on $T_0M$ defined by the order 4 Taylor
polynomial of $\pa_q^2f$ at the origin, and $|_{\cD_0}$ its restriction to 
$\cD_0$.  (Recall that if a smooth function $\varphi$ 
on a manifold $M$ vanishes to order $k$ at a point $y$, then
$\nabla^k\varphi(y)$ 
is an invariantly defined symmetric $k$-form on $T_yM$ depending only on
the smooth structure.)  Up to an overall nonzero constant multiple, the
$A_i$ above are given by 
\begin{equation}\label{Apartial}
A_0=\pa_q^6f(0),\; A_1=\pa_q^5\pa_xf(0),\;
A_2=\pa_q^4\pa_x^2f(0),\;  A_3=\pa_q^3\pa_x^3f(0),\;  
A_4=\pa_q^2\pa_x^4f(0).
\end{equation}
This follows by direct calculation.  Equation (1.3) in \cite{N2} gives a
formula for a representative of the conformal structure in terms of $F$.
All terms in the formula involve at most 4 derivatives of
$F$.  For $F=q^2+f$ as above, the only terms which can contribute to the
value of the curvature tensor at the origin must involve 4 derivatives of
$F$.  Inspecting term by term shows that the curvature at the origin of
$g_F$ is the same as the curvature at the origin of the metric $g_{q^2}+h$,
where 
$$
h=-24 \pa_x^2\pa_q^2f\,dy^2
+ 24 \pa_x\pa_q^3f\,dydz-6\pa_q^4f\,dz^2.
$$
Since $g_{q^2}$ is flat, the curvature tensor at the origin of $g_{q^2}+h$
is given by 
$$
R_{ijkl}=\tfrac12 \left( h_{il,jk}-h_{jl,ik}-h_{ik,jl}+h_{jk,il}\right),
$$
where the indices correspond to components and derivatives with respect to
the frame $\{\pa_y,\pa_z,\pa_p,\pa_q,\pa_x\}$.  Using the fact that 
$g_{q^2}(0)=480dydq+240dzdx-320dp^2$, it is straightforward but 
tedious to identify the isomorphism $\tau$ appearing in \eqref{Adef}, to
calculate the relevant components of the Weyl tensor at the origin, and
then to verify \eqref{Apartial}.  
\stopthm

\section{Appendix}\label{appendix}

In this appendix we collect facts about Cartan's connection \cite{C}
associated to 
generic 2-plane fields in the form given by Nurowski \cite{N1} (modulo some
relabeling).  Other
discussions may be found in the literature.

Define $\varphi\in \Lambda^3\R^7{}^*$ by
$$
\varphi = 6dx^{012} +\sqrt{3}\left(
dx^{234}-dx^{135}+dx^{036}\right)+dx^{456} 
$$
where the coordinates are labeled $(x^0,x^1,\cdots,x^6)$ and
$dx^{ijk}=dx^i\wedge dx^j\wedge dx^k$.  
Define $G_2=\{A\in GL(7,\R): A^*\varphi = \varphi\}$.     
Then $G_2\subset SO(\widetilde{h})$, where 
$$
\widetilde{h}_{IJ}=
\begin{pmatrix}
0&0&1\\
0&h_{ij}&0\\
1&0&0
\end{pmatrix}
$$
and
$$
h_{ij}=
\begin{pmatrix}
0&0&0&0&-1\\
0&0&0&1&0\\
0&0&-1&0&0\\
0&1&0&0&0\\
-1&0&0&0&0\\
\end{pmatrix}.
$$
The Lie algebra $\fg_2\subset\fs\fo(\widetilde{h})$ is the set of 
matrices of the form 
$$
\begin{pmatrix}
-(a_1+a_4)&a_8&a_9&-\frac{1}{\sqrt{3}}a_7&\frac{1}{2\sqrt{3}}a_5&
\frac{1}{2\sqrt{3}}a_6&0\\
b^1&a_1&a_2&\frac{1}{\sqrt{3}}b^4&-\frac{1}{2\sqrt{3}}b^3&
0&\frac{1}{2\sqrt{3}}a_6\\
b^2&a_3&a_4&\frac{1}{\sqrt{3}}b^5&0&-\frac{1}{2\sqrt{3}}b^3&
-\frac{1}{2\sqrt{3}}a_5\\
b^3&a_5&a_6&0&\frac{1}{\sqrt{3}}b^5&-\frac{1}{\sqrt{3}}b^4&
-\frac{1}{\sqrt{3}}a_7\\
b^4&a_7&0&a_6&-a_4&a_2&-a_9\\
b^5&0&a_7&-a_5&a_3&-a_1&a_8\\
0&b^5&-b^4&b^3&-b^2&b^1&a_1+a_4
\end{pmatrix}.
$$
Define $P=\{A\in G_2:Ae_0=\lambda e_0, \lambda>0\}$.  Its Lie algebra 
$\fp\subset \fg_2$ is the subset given by $b^1=\cdots =b^5=0$. 
The quadratic form $h_{ij}b^ib^j=-2b^1b^5+2b^2b^4-(b^3)^2$ on $\fg_2/\fp$
is preserved up to scale by the adjoint action of $P$.

Let $\cD\subset TM$ be a generic 2-plane field on a connected, oriented
5-manifold $M$.  There is a principal bundle $\cB\rightarrow M$
with structure group $P$ and Cartan connection $\omega:T\cB\rightarrow
\fg_2$ canonically associated to $\cD$ up to equivalence.  The Cartan
connection can be written 
$$
\omega=
\begin{pmatrix}
-(\fe_1+\fe_4)&\fe_8&\fe_9&-\frac{1}{\sqrt{3}}\fe_7&\frac{1}{2\sqrt{3}}\fe_5&
\frac{1}{2\sqrt{3}}\fe_6&0\\
\theta^1&\fe_1&\fe_2&\frac{1}{\sqrt{3}}\theta^4&-\frac{1}{2\sqrt{3}}\theta^3&
0&\frac{1}{2\sqrt{3}}\fe_6\\
\theta^2&\fe_3&\fe_4&\frac{1}{\sqrt{3}}\theta^5&0&-\frac{1}{2\sqrt{3}}\theta^3&
-\frac{1}{2\sqrt{3}}\fe_5\\
\theta^3&\fe_5&\fe_6&0&\frac{1}{\sqrt{3}}\theta^5&-\frac{1}{\sqrt{3}}\theta^4&
-\frac{1}{\sqrt{3}}\fe_7\\
\theta^4&\fe_7&0&\fe_6&-\fe_4&\fe_2&-\fe_9\\
\theta^5&0&\fe_7&-\fe_5&\fe_3&-\fe_1&\fe_8\\
0&\theta^5&-\theta^4&\theta^3&-\theta^2&\theta^1&\fe_1+\fe_4
\end{pmatrix}
$$
where the $\theta^i$ and $\fe_j$ are scalar 1-forms on $\cB$.  If $\sigma$
is a local section of $\cB\rightarrow M$, set $\tb{}^i=\sigma^*\theta^i$.
Then $\{\tb{}^1,\tb{}^2,\tb{}^3,\tb{}^4,\tb{}^5\}$ is a frame for 
$T^*M$ for which $\cD=\ker\{\tb{}^1,\tb{}^2,\tb{}^3\}$ and the derived
distribution $\cD^1=\operatorname{span}(\cD,[\cD,\cD])$ is given by 
$\cD^1=\ker\{\tb{}^1,\tb{}^2\}$.  The metric 
$g=h_{ij}\tb{}^i\tb{}^j =-2\tb{}^1\tb{}^5+2\tb{}^2\tb{}^4-(\tb{}^3)^2$ is a
representative   
for Nurowski's conformal structure.  It is clear that $\cD^\perp=\cD^1$
with respect to such a metric.  

The curvature $\Omega=d\omega + \omega\wedge\omega$ has the form 
\begin{equation}\label{cartancurvature}
\Omega=
\begin{pmatrix}
0&\Phi_8&\Phi_9&\frac{1}{\sqrt{3}}\Phi_7&\frac{1}{2\sqrt{3}}\Phi_5&
\frac{1}{2\sqrt{3}}\Phi_6&0\\
0&\Phi_1&\Phi_2&0&0&
0&\frac{1}{2\sqrt{3}}\Phi_6\\
0&-\Phi_3&-\Phi_1&0&0&0&
-\frac{1}{2\sqrt{3}}\Phi_5\\
0&\Phi_5&\Phi_6&0&0&0&
\frac{1}{\sqrt{3}}\Phi_7\\
0&-\Phi_7&0&\Phi_6&\Phi_1&\Phi_2&-\Phi_9\\ 
0&0&-\Phi_7&-\Phi_5&-\Phi_3&-\Phi_1&\Phi_8\\
0&0&0&0&0&0&0
\end{pmatrix}
\end{equation}
where
\begin{equation}\label{Phiform}
\begin{split}
\Phi_1&= C_1\theta^1\wedge\theta^2
+B_1\theta^1\wedge\theta^3   
+B_2\theta^2\wedge\theta^3
+A_1\theta^1\wedge\theta^4\\
&\qquad\qquad\qquad\qquad\qquad\qquad\qquad\qquad
+A_2\theta^1\wedge\theta^5
+A_2\theta^2\wedge\theta^4
+A_3\theta^2\wedge\theta^5\\
\Phi_2&= C_2\theta^1\wedge\theta^2
+B_2\theta^1\wedge\theta^3   
+B_3\theta^2\wedge\theta^3
+A_2\theta^1\wedge\theta^4\\
&\qquad\qquad\qquad\qquad\qquad\qquad\qquad\qquad
+A_3\theta^1\wedge\theta^5
+A_3\theta^2\wedge\theta^4
+A_4\theta^2\wedge\theta^5\\
\Phi_3&= C_0\theta^1\wedge\theta^2
+B_0\theta^1\wedge\theta^3   
+B_1\theta^2\wedge\theta^3
+A_0\theta^1\wedge\theta^4\\
&\qquad\qquad\qquad\qquad\qquad\qquad\qquad\qquad
+A_1\theta^1\wedge\theta^5
+A_1\theta^2\wedge\theta^4
+A_2\theta^2\wedge\theta^5\\
\Phi_5&=D_0\theta^1\wedge\theta^2
+2C_0\theta^1\wedge\theta^3   
+2C_1\theta^2\wedge\theta^3
+ B_0\theta^1\wedge\theta^4\\
&\qquad\qquad\qquad\qquad\qquad\qquad\qquad\qquad
+ B_1\theta^1\wedge\theta^5
+ B_1\theta^2\wedge\theta^4
+ B_2\theta^2\wedge\theta^5\\
\Phi_6&=D_1\theta^1\wedge\theta^2
+2C_1\theta^1\wedge\theta^3   
+2C_2\theta^2\wedge\theta^3
+B_1\theta^1\wedge\theta^4\\
&\qquad\qquad\qquad\qquad\qquad\qquad\qquad\qquad
+ B_2\theta^1\wedge\theta^5
+ B_2\theta^2\wedge\theta^4
+ B_3\theta^2\wedge\theta^5\\
\Phi_7&=E\theta^1\wedge\theta^2
+D_0\theta^1\wedge\theta^3   
+D_1\theta^2\wedge\theta^3
+ C_0\theta^1\wedge\theta^4\\
&\qquad\qquad\qquad\qquad\qquad\qquad\qquad\qquad
+ C_1\theta^1\wedge\theta^5
+ C_1\theta^2\wedge\theta^4
+ C_2\theta^2\wedge\theta^5.
\end{split}
\end{equation}
The coefficients $A_0$, $A_1$, $A_2$, $A_3$, $A_4$, 
$B_0$, $B_1$, $B_2$, $B_3$, 
$C_0$, $C_1$, $C_2$, $D_0$, $D_1$, $E$ are Cartan's curvature quantities.  
There are further formulae for $\Phi_8$, $\Phi_9$ which we will not need
here; see Nurowski.  

The Cartan geometry $(\cB,\omega)$ may be regarded as the reduction of a  
Cartan geometry $(\cB_c,\omega_c)$ of type
$(\fs\fo(\widetilde{h}),P_c)$, where 
$P_c=\{A\in SO(\widetilde{h}):Ae_0=\lambda e_0, \lambda>0\}$.  Observe
that \eqref{cartancurvature} may be written
\begin{equation}\label{Oconformal}
\Omega=
\begin{pmatrix}
0&\Omega_j&0\\
0&\Omega^i{}_j&-\Omega^i\\
0&0&0
\end{pmatrix}
\end{equation}
where
$$
\Omega^i{}_j=
\begin{pmatrix}
\Phi_1&\Phi_2&0&0&0\\
-\Phi_3&-\Phi_1&0&0&0\\
\Phi_5&\Phi_6&0&0&0\\
-\Phi_7&0&\Phi_6&\Phi_1&\Phi_2\\
0&-\Phi_7&-\Phi_5&-\Phi_3&-\Phi_1
\end{pmatrix},
$$
\vspace{.1in}
$$
\Omega_j=
\begin{pmatrix}
\Phi_8&\Phi_9&\frac{1}{\sqrt{3}}\Phi_7&\frac{1}{2\sqrt{3}}\Phi_5&
\frac{1}{2\sqrt{3}}\Phi_6
\end{pmatrix},
$$
and $\Omega^i=h^{ij}\Omega_j$.  One checks directly from  
\eqref{Phiform} that the coefficients $\Omega^i{}_{jkl}$ defined by 
$$
\Omega^i{}_j=\tfrac12\Omega^i{}_{jkl}\theta^k\wedge\theta^l\qquad\qquad
\Omega^i{}_{jkl}=-\Omega^i{}_{jlk} 
$$ 
satisfy $\Omega^i{}_{jil}=0$.  It follows (see, e.g., \cite{Ko}) that  
$\omega_c$ is the normal Cartan connection for Nurowski's conformal 
structure.  Lowering an index gives  
$\Omega_{ij}=\frac12 \Omega_{ijkl}\theta^k\wedge\theta^l$:   
\begin{equation}\label{Wform}
\Omega_{ij}=
\begin{pmatrix}
0&\Phi_7&\Phi_5&\Phi_3&\Phi_1\\ 
-\Phi_7&0&\Phi_6&\Phi_1&\Phi_2\\
-\Phi_5&-\Phi_6&0&0&0\\
-\Phi_3&-\Phi_1&0&0&0\\
-\Phi_1&-\Phi_2&0&0&0
\end{pmatrix}.
\end{equation}
(This corrects the corresponding formula given in the appendix of
\cite{N1}.)
{From} \eqref{Phiform} and \eqref{Wform} one reads off the following:
\begin{equation}\label{As}
\begin{split}
A_0&=\Omega_{1414}\\
A_1&=\Omega_{1415}=\Omega_{1424}=\Omega_{1514}=\Omega_{2414}\\
A_2&=\Omega_{1425}=\Omega_{1515}=\Omega_{1524}=\Omega_{2415}=\Omega_{2424}=\Omega_{2514}\\
A_3&=\Omega_{1525}=\Omega_{2425}=\Omega_{2515}=\Omega_{2524}\\
A_4&=\Omega_{2525}.
\end{split}
\end{equation}

If $\sigma$ is a section of $\cB\rightarrow M$ as above, then comparing
\eqref{Oconformal} with the form of the curvature of the normal conformal 
Cartan connection shows that  
$W_{ijkl}=\Omega_{ijkl}\circ\sigma$ are the components of the Weyl tensor
of the metric $g=h_{ij}\tb{}^i\tb{}^j$  
in the frame $\{\tb{}^1,\tb{}^2,\tb{}^3,\tb{}^4,\tb{}^5\}$.  Similarly,
setting $\Omega_j=\frac12 \Omega_{jkl}\theta^k\wedge\theta^l$ with
$\Omega_{jkl}=-\Omega_{jlk}$, the components of the Cotton tensor are given
by $C_{jkl}=\Omega_{jkl}\circ\sigma$.  
Thus this expresses the Weyl and Cotton curvature in terms of Cartan's  
scalar invariants.

\begin{proposition}\label{Weylvanish}
$W(\cdot,\cdot,Y,Z)=0$ if $Y$, $Z\in \cD^1$.   
\end{proposition}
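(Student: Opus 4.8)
The plan is to reduce the statement to a direct inspection of the curvature formulas in the appendix. First I would fix the adapted representative $g=h_{ij}\tb^i\tb^j$ coming from a local section $\sigma$ of $\cB\to M$; because the Weyl tensor of a conformal class is determined only up to an overall positive factor, the vanishing of any prescribed collection of its components does not depend on which representative is used, so nothing is lost. Writing $\{e_1,\dots,e_5\}$ for the frame dual to $\{\tb^1,\dots,\tb^5\}$, the relation $\cD^1=\ker\{\tb^1,\tb^2\}$ recorded in the appendix gives $\cD^1=\operatorname{span}\{e_3,e_4,e_5\}$. Since $Y,Z\in\cD^1$ occupy the last two slots of $W$, and since $W_{ijkl}=\Omega_{ijkl}\circ\sigma$ with $\Omega_{ij}=\tfrac12\Omega_{ijkl}\theta^k\wedge\theta^l$, the claim $W(\cdot,\cdot,Y,Z)=0$ is equivalent to $\Omega_{ijkl}=0$ whenever $k,l\in\{3,4,5\}$; that is, to the assertion that no entry $\Omega_{ij}$ contains any of the two-forms $\theta^3\wedge\theta^4$, $\theta^3\wedge\theta^5$, $\theta^4\wedge\theta^5$.

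I would then establish this last assertion by reading off \eqref{Wform} and \eqref{Phiform}. Every entry of the matrix $\Omega_{ij}$ in \eqref{Wform} is, up to sign, one of the two-forms $\Phi_1,\Phi_2,\Phi_3,\Phi_5,\Phi_6,\Phi_7$ of \eqref{Phiform} or else zero, so it suffices to examine those six two-forms. Inspecting \eqref{Phiform}, every monomial occurring in each of them has the form $\theta^1\wedge\theta^k$ or $\theta^2\wedge\theta^k$; that is, each wedge term carries at least one factor $\theta^1$ or $\theta^2$. Hence none of these two-forms, and \emph{a fortiori} no entry of $\Omega_{ij}$, involves a purely ``lower'' wedge $\theta^k\wedge\theta^l$ with $k,l\in\{3,4,5\}$, giving $\Omega_{ijkl}=0$ for all $i,j$ whenever $k,l\in\{3,4,5\}$.

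There is no substantive obstacle here; the content is entirely bookkeeping of the index structure of Cartan's curvature, and the only points demanding care are correctly identifying $\cD^1$ with $\operatorname{span}\{e_3,e_4,e_5\}$ from $\cD^1=\ker\{\tb^1,\tb^2\}$, and keeping straight that $Y$ and $Z$ sit in the last two slots of $W$, so that the relevant indices are the wedge indices $k,l$ in the expansion $\Omega_{ij}=\tfrac12\Omega_{ijkl}\theta^k\wedge\theta^l$ rather than the matrix-position indices $i,j$. Once these are in place, the conclusion follows immediately from the explicit form of the $\Phi_a$ in \eqref{Phiform}.
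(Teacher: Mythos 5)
Your proof is correct and takes essentially the same approach as the paper: the paper's proof is exactly the observation that every entry of \eqref{Wform} is, up to sign, one of the $\Phi_a$, and that each $\Phi_a$ in \eqref{Phiform} is $\equiv 0 \bmod \theta^1,\theta^2$ (the paper also phrases it equivalently via the vanishing lower-right block of \eqref{Wform}, i.e.\ $\Omega_{ijkl}=0$ for $i,j\geq 3$, which yields the statement through the pair symmetry of the Weyl tensor). Your added bookkeeping --- identifying $\cD^1$ with the span of the frame vectors dual to $\tb{}^3,\tb{}^4,\tb{}^5$ and noting that the slots occupied by $Y,Z$ correspond to the wedge indices $k,l$ rather than the matrix indices --- is precisely what makes the paper's one-line argument work.
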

\begin{proof}
This is clear since $\Omega_{ijkl}=0$ if $i$, $j\geq 3$ from
\eqref{Wform}.  (Or observe from \eqref{Phiform} that
$\Omega_{ij}=0\mod \theta^1,\theta^2$ for all $i$, $j$.)
\end{proof}

\begin{proposition}\label{Asym}
The tensor $A$ defined in \eqref{Adef} is symmetric,
i.e. $A\in \Gamma(S^4\cD^*)$.    
\end{proposition}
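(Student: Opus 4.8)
The plan is to carry out the whole computation in the adapted coframe $\{\tb{}^1,\ldots,\tb{}^5\}$ of the appendix, choosing for $g$ the representative $g=-2\tb{}^1\tb{}^5+2\tb{}^2\tb{}^4-(\tb{}^3)^2$, so that the symmetry of $A$ can be read off directly from the list \eqref{As}. Let $\{e_1,\ldots,e_5\}$ denote the frame dual to $\{\tb{}^1,\ldots,\tb{}^5\}$, so that $\cD=\operatorname{span}\{e_4,e_5\}$ and $\cD^1=\operatorname{span}\{e_3,e_4,e_5\}$. Since $A$ is multilinear, it suffices to evaluate it on basis vectors, and I would show that $A(e_{i_1},e_{i_2},e_{i_3},e_{i_4})$, with each $i_k\in\{4,5\}$, depends only on the number of the $i_k$ equal to $5$. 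Such a quantity is a symmetric function of its four arguments, which gives the full $S_4$-symmetry and hence $A\in\Gamma(S^4\cD^*)$.

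First I would make $\tau=\psi^{-1}\circ\mu$ explicit in this frame. The bracket form $\mu:\cD\rightarrow\cD^*$ is skew, so in the basis $\{e_4,e_5\}$ it necessarily has the shape $\mu(e_4)=\beta\,\tb{}^5|_{\cD}$, $\mu(e_5)=-\beta\,\tb{}^4|_{\cD}$ for some $\beta\neq 0$, the nonvanishing being precisely the genericity of $\cD$. For $\psi$, lowering an index gives $g(e_1,\cdot)=-\tb{}^5$ and $g(e_2,\cdot)=\tb{}^4$, whence $\psi^{-1}(\tb{}^5|_{\cD})=-e_1$ and $\psi^{-1}(\tb{}^4|_{\cD})=e_2$ modulo $\cD^1$. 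Composing, $\tau(e_4)=-\beta\,e_1$ and $\tau(e_5)=-\beta\,e_2$ modulo $\cD^1$. The feature to extract is that $\tau$ is \emph{level-preserving}: it carries $e_4$ to a multiple of $e_1$ and $e_5$ to the same multiple of $e_2$.

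With this in hand, $A(e_{i_1},e_{i_2},e_{i_3},e_{i_4})=\beta^2\,W(e_a,e_{i_2},e_b,e_{i_4})$, where $a,b\in\{1,2\}$ are the images of $i_1,i_3$ under $4\mapsto 1$, $5\mapsto 2$; here Proposition~\ref{Weylvanish} lets me discard the $\cD^1$-ambiguity in $\tau(e_{i_1})$ and $\tau(e_{i_3})$, exactly as in the remark preceding \eqref{Adef}. Now I would invoke \eqref{As}: assigning weight $0$ to the indices $1,4$ and weight $1$ to the indices $2,5$, one checks term by term that each of the sixteen components $W_{ajbk}=\Omega_{ajbk}\circ\sigma$ with $a,b\in\{1,2\}$ and $j,k\in\{4,5\}$ equals the curvature quantity $A_m$ of \eqref{As}, where $m$ is the total weight of $(a,j,b,k)$. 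Because $\tau$ is level-preserving, the total weight of $(a,i_2,b,i_4)$ equals the number of $i_1,i_2,i_3,i_4$ that are equal to $5$. Hence $A(e_{i_1},e_{i_2},e_{i_3},e_{i_4})=\beta^2 A_m$ depends only on that count, as required.

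I expect the explicit identification of $\tau$, and in particular the observation that it is level-preserving, to be the only point requiring care. It is worth emphasizing that the full $S_4$-symmetry is not a formal consequence of the Weyl symmetries: those alone yield only $A(X_1,X_2,X_3,X_4)=A(X_3,X_4,X_1,X_2)$ via $W_{ijkl}=W_{klij}$. The remaining symmetries come from the special degeneracy of the curvature recorded in \eqref{As}, itself a reflection of the genericity of $\cD$, together with the level-preservation of $\tau$ (forced by the off-diagonal pairing of $\{e_1,e_2\}$ with $\{e_4,e_5\}$ under $g$). Once \eqref{As} is in hand, the remaining weight bookkeeping is entirely mechanical.
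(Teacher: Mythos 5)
Your proposal is correct and takes essentially the same route as the paper's proof: compute $\psi$ and $\mu$ in the adapted coframe to identify $\tau$ explicitly (level-preserving, carrying the $\{e_4,e_5\}$ basis of $\cD$ to multiples of $e_1,e_2$ modulo $\cD^1$), and then read off the full symmetry from the coincidences recorded in \eqref{As}. The only difference is that you pin down $\mu$ merely up to the nonzero scalar $\beta$ using skewness and genericity, whereas the paper evaluates it exactly (taking $\alpha=\tfrac{\sqrt{3}}{2}\,\tb{}^3$ and the ${}^0{}_3$ component of the structure equation); your shortcut is legitimate here since the symmetry conclusion is insensitive to the overall factor $\beta^2$.
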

\begin{proof}
Consider the maps $\psi$, $\mu$, $\tau$ defined in the paragraph before
\eqref{Adef}.  
Let again $\{\tb{}^1,\tb{}^2,\tb{}^3,\tb{}^4,\tb{}^5\}$ be a coframe
on $M$ obtained by pulling back by a local section of the principal bundle 
and let $\{U_1,U_2,U_3,U_4,U_5\}$ be the dual frame.  Then 
$\cD=\operatorname{span}\{U_4, U_5\}$ and 
$\cD^1=\operatorname{span}\{U_3, U_4, U_5\}$.  
Lowering an index shows that  
$$
\psi(U_1+ \cD^1)=-\tb{}^5|_\cD,\qquad \psi(U_2 +\cD^1)=\tb{}^4|_\cD.
$$ 
Taking $\alpha
=\frac{\sqrt{3}}{2}\;\tb{}^3$, consideration of the ${}^0{}_3$ component of
$\Omega=d\omega+\omega\wedge\omega$ shows that 
$$
\mu(U_4)=-\tb{}^5|_\cD,\qquad 
\mu(U_5)=\tb{}^4|_\cD. 
$$ 
Thus  
$$
\tau(U_4)=U_1 + \cD^1, \qquad 
\tau(U_5)=U_2 + \cD^1.
$$  
Hence \eqref{As} shows that $A$ defined by \eqref{Adef} is symmetric.   
\end{proof}

\noindent
Note that \eqref{As} in fact shows that  
\begin{equation}\label{Aform}
A=A_0u^4+4A_1u^3v+6A_2u^2v^2+4A_3uv^3+A_4v^4,
\end{equation}
where $u=\tb^4|_{\cD}$, $v=\tb^5|_{\cD}$, and we abuse notation by denoting
also by $A_0$, $A_1$, $A_2$, $A_3$, $A_4$ their pullbacks under the local
section $\sigma$ of $\cB$. 

The reductive part of $P$ is $\R_+\cdot SL(2,\R)$, where $SL(2,\R)$ is 
viewed as a subgroup of $SO(h)$ via 
$$
SL(2,\R)\ni S \mapsto
\begin{pmatrix}
S&0&0\\
0&1&0\\
0&0&S
\end{pmatrix}\in SO(h).
$$
The space of Weyl tensors is a 35-dimensional irreducible representation
for $SO(h)$.  Upon restriction to $SL(2,\R)$ it decomposes as
$S^4\oplus 2S^3 \oplus 3S^2 \oplus 4S^1 \oplus 5\R$, where $S^k$
denotes the $k^{\text{th}}$ symmetric power of the standard representation
and $\R=S^0$ the trivial representation.  The space of Weyl tensors which
arise from a conformal structure associated to a generic 2-plane
distribution is the 15-dimensional subspace given by \eqref{Phiform}, 
\eqref{Wform}.  This decomposes under $SL(2,\R)$ 
as $S^4\oplus S^3 \oplus S^2 \oplus S^1 \oplus \R$ corresponding to the 
division of Cartan's scalar invariants into $A$'s, $B$'s, $C$'s, $D$'s, and 
$E$.


\begin{thebibliography}{BEGM}


\bibitem[BEG]{BEG} T. N. Bailey, M. G. Eastwood and A. R. Gover, {\it 
Thomas's structure bundle for conformal, projective and related
structures}, Rocky Mountain J. Math. {\bf 24} (1994), 1191--1217. 

\bibitem[B]{B} O. Biquard, {\it Continuation unique \`a partir de l'infini
conforme pour les m\'etriques d'Einstein}, Math. Res. Lett. 
{\bf 15} (2008), 1091--1099. {\tt arXiv:0708.4346}.  

\bibitem[BG]{BG} T. Branson and A. R. Gover, {\it Conformally invariant
operators, differential forms, cohomology and a generalisation of
Q-curvature}, Comm. P. D. E. {\bf 30} (2005), 1611--1669,
{\tt arXiv:math/0309085}. 

\bibitem[Br1]{Br1} R. Bryant, {\it Metrics with exceptional holonomy}, 
Ann. Math. {\bf 126} (1987), 525--576.

\bibitem[Br2]{Br2} R. Bryant, {\it Conformal geometry and 3-plane fields on
  6-manifolds},  Developments of Cartan Geometry and Related Mathematical
  Problems, RIMS Symposium Proceedings (Kyoto University) {\bf 1502},
  2006, 1--15, {\tt arXiv:math/0511110}.

\bibitem[BH]{BH} R. L. Bryant and L. Hsu, {\it Rigidity of integral curves
  of rank 2 distributions}, Invent. Math. {\bf 114} (1993), 435--461.

\bibitem[\v{C}G1]{CG1} A. \v{C}ap and A. R. Gover, {\it Standard
  tractors and the conformal ambient metric construction}, Ann. Global
  Anal. Geom. {\bf 24} (2003), 231--259, {\tt arXiv:math/0207016}.   

\bibitem[\v{C}G2]{CG2} A. \v{C}ap and A. R. Gover, {\it A holonomy 
  characterisation of Fefferman spaces}, Ann. Global Anal. Geom. {\bf 38}
  (2010), 399--412, {\tt arXiv:math/0611939}.

\bibitem[C]{C} \'E. Cartan, {\it Les syst\`emes de Pfaff, \`a cinq variables et
les \'equations aux d\'eriv\'ees partielles du second ordre}, 
Ann. Sci. \'Ecole Norm. Sup. {\bf 27} (1910), 109--192.

\bibitem[F]{F} C. Fefferman, {\it Monge-Amp\`ere equations, the Bergman
kernel, and geometry of pseudoconvex domains}, Ann. Math. {\bf 103} (1976),
395--416; Correction:  Ann. Math. {\bf 104} (1976), 393--394.  

\bibitem[FG1]{FG1} C. Fefferman and C. R. Graham, {\it
Conformal invariants,} in {\it The Mathematical Heritage of \'Elie Cartan 
(Lyon, 1984)},  Ast\'erisque, 1985, Numero Hors Serie, 95--116.  

\bibitem[FG2]{FG2} C. Fefferman and C. R. Graham, {\it
The Ambient Metric}, to appear, Princeton University Press, 
{\tt arXiv:math/0710.0919}.

\bibitem[Go1]{Go1} A. R. Gover, {\it Almost conformally Einstein manifolds
  and obstructions}, Differential geometry and its applications,
  Proceedings of the 9th International Conference (DGA 2004), 
  Matfyzpress, Prague, 2005, 247--260, {\tt arXiv:math/0412393}.

\bibitem[Go2]{Go2} A. R. Gover, {\it Laplacian operators and $Q$-curvature
on conformally Einstein manifolds},  Math. Ann. {\bf 336} (2006), 311--334, 
{\tt arXiv:math/0506037}.   

\bibitem[Go3]{Go3} A. R. Gover, {\it Almost Einstein and
Poincar\'e-Einstein manifolds in Riemannian signature},
J. Geom. Phys. {\bf 60} (2010), 182--204, {\tt arXiv:0803.3510}.      

\bibitem[Gr1]{Gr1} C.R. Graham, {\it Scalar boundary invariants and the
Bergman kernel}, Complex Analysis II, Proceedings, Univ. of Maryland
1985-86, Springer Lecture Notes {\bf 1276}, 108--135.  

\bibitem[Gr2]{Gr2} C.R. Graham, {\it Higher asymptotics of the complex
Monge-Amp\`ere equation}, Comp. Math. {\bf 64} (1987), 133--155. 

\bibitem[GH]{GH} C. R. Graham and K. Hirachi, {\it Inhomogeneous ambient
metrics}, Symmetries and Overdetermined Systems of Partial Differential
  Equations, 403--420, IMA Vol. Math. Appl. {\bf 144}, Springer, 2008, 
  {\tt arXiv:math/0611931}.    

\bibitem[GL]{GL} C. R. Graham and J. M. Lee, {\it Einstein metrics with 
prescribed conformal infinity on the ball},  
Adv. Math.  {\bf 87} (1991), 186--225.

\bibitem[H]{H} M. Hammerl, {\it  Invariant prolongation of BGG-operators in
  conformal geometry}, Arch. Math. (Brno) {\bf 44} (2008), 367--384, 
{\tt arXiv:0811.4122}.

\bibitem[HS]{HS} M. Hammerl and K. Sagerschnig, {\it Conformal structures 
associated to generic rank 2 distributions on 5-manifolds--characterization
and Killing-field decomposition}, SIGMA {\bf 5} (2009), 081, 29 pages, 
{\tt arXiv:0908.0483}.  


\bibitem[J]{J} A. Juhl, {\it Explicit formulas for GJMS-operators and
  $Q$-curvatures}, {\tt arXiv:1108.0273}.  

\bibitem[Ki]{Ki} S. Kichenassamy, {\it Fuchsian Reduction}, Birkh\"auser,
  2007. 

\bibitem[Ko]{Ko} S. Kobayashi, {\it Transformation Groups in Differential
  Geometry}, Springer, 1972.  

\bibitem[Lee]{Lee} J. Lee, {\it  Higher asymptotics of the complex
Monge-Amp\`ere equation and geometry of CR-manifolds}, MIT Ph.D. thesis, 
1982.  

\bibitem[Leis]{Leis} T. Leistner, {\it Conformal holonomy of C-spaces, 
Ricci-flat, and Lorentzian manifolds}, 
Diff. Geom. Appl. {\bf 24} (2006), 458--478, {\tt arXiv:math/0501239}. 

\bibitem[LN]{LN} T. Leistner and P. Nurowski, {\it Conformal structures
with $G_{2(2)}$-ambient metrics}, {\tt arXiv:0904.0186}.

\bibitem[Leit1]{Leit1} F. Leitner, {\it Normal conformal Killing forms}, 
{\tt arXiv:math/0406316}.  

\bibitem[Leit2]{Leit2} F. Leitner, {\it Conformal Killing forms with
  normalisation condition}, Rend. Circ. Mat. Palermo (2) Suppl. No. 75
  (2005), 279--292. 

\bibitem[Leit3]{Leit3} F. Leitner, {\it A remark on unitary conformal 
holonomy}, Symmetries and Overdetermined Systems of Partial Differential
  Equations, 445--460, IMA Vol. Math. Appl. {\bf 144}, Springer, 2008, 
  {\tt arXiv:math/0604393}.

\bibitem[N1]{N1} P. Nurowski, {\it Differential equations and conformal
structures}, J. Geom. Phys. {\bf 55} (2005), 19--49, {\tt
arXiv:math/0406400}. 

\bibitem[N2]{N2} P. Nurowski, {\it Conformal structures with explicit
ambient metrics and conformal $G_2$ holonomy}, Symmetries and
Overdetermined Systems of Partial Differential Equations, 515--526, IMA
Vol. Math. Appl. {\bf 144}, Springer, 2008, {\tt arXiv:math/0701891}.  

\bibitem[T]{T} T. Y. Thomas, {\it On conformal geometry},
  Proc. Natl. Acad. Sci. USA {\bf 12} (1926), 352--359. 

\bibitem[W]{W} T. Willse, {\it Parallel tractor extension and metrics of
  split $G_2$ holonomy}, University of Washington Ph.D. thesis, 2011. 


\end{thebibliography}
\end{document}